\documentclass[smallextended,referee,envcountsect, article]{svjour3}
\usepackage [latin1]{inputenc}
\usepackage{amsmath,amssymb}
\usepackage{marvosym}
\usepackage{algorithm}
\usepackage{algpseudocode}
\usepackage{amsmath}
\usepackage{graphicx}
\usepackage{subcaption}
\captionsetup{font={small}}
\usepackage{lipsum}
\usepackage{epstopdf}
\usepackage[numbers,sort&compress]{natbib}
\usepackage[colorlinks,linkcolor=blue,urlcolor=blue,citecolor=blue]{hyperref}
\usepackage{xcolor}
\smartqed
\usepackage{graphicx}
\journalname{}

\setlength{\textheight}{23cm} \setlength{\textwidth}{14cm}
\setlength{\topmargin}{-0.8cm} \setlength{\oddsidemargin}{0cm}

\usepackage{fancyhdr}
\pagestyle{fancy}
\fancyhf{}
\fancyhead[RE]{\footnotesize }
\fancyhead[LO]{\footnotesize  }
\fancyhead[LE,RO]{\footnotesize ~\thepage~}

\usepackage{ntheorem}
\theoremheaderfont{\bfseries\upshape}
\theorembodyfont{\upshape}
\renewtheorem{remark}{\it Remark}[section]
\renewtheorem{example}{Example}[section]

\begin{document}

\title{ Convergence analysis of  a Tikhonov regularized inertial dynamical system  and algorithm   for  convex optimization problems}

\author{Xiangkai Sun$^{1}$\and Guoxiang Tian$^{1}$\and Huan Zhang$^{2}$}

\institute{\\Xiangkai Sun (Corresponding Author)  \at{\small sunxk@ctbu.edu.cn } \\
          \\Guoxiang Tian\at {\small Tiangx2025@163.com} \\ \\Huan Zhang
 \at{\small zhanghwxy@163.com}\\ \\
               $^{1}$Chongqing Key Laboratory of Statistical Intelligent Computing and Monitoring, School of Mathematics and Statistics,
 Chongqing Technology and Business University,
Chongqing 400067, China\\
$^{2}$College of Mathematics and Statistics, Chongqing University, Chongqing 401331, China.
}

\date{Received: date / Accepted: date}

\maketitle

\begin{abstract}
This paper deals with  a Tikhonov regularized second-order inertial dynamical system that incorporates time scaling, asymptotically vanishing damping and Hessian-driven damping for solving convex optimization problems. Under appropriate setting of the parameters, we first obtain fast convergence results of the function value along the  trajectory generated by the dynamical system.
Then, we show that  the trajectory generated by the dynamical system converges weakly to a minimizer  of the convex optimization problem.
We also demonstrate that, by properly tuning these parameters, both   fast convergence rates of the function value and   strong convergence
of the trajectory towards the minimum norm solution of the convex optimization problem can be achieved simultaneously. Furthermore, we  study
convergence properties of an  inertial proximal gradient algorithm  obtained by the temporal  discretization of the  dynamical system. Finally, we present numerical experiments to illustrate the obtained results.
\end{abstract}
\keywords{Convex optimization\and Tikhonov regularization \and Second-order dynamical system\and   Inertial proximal gradient algorithm \and Strong convergence }
\subclass{90C25 \and 37N40 \and 34D05}

\section{Introduction}

Let $\mathcal{X}$ be a real Hilbert space and  let $g:\mathcal{X}\rightarrow\mathbb{R}$ be a  differentiable convex function. Consider the following unconstrained convex optimization problem
\begin{eqnarray}\label{g}
\min_{x\in\mathcal{X}}{g(x)}.
\end{eqnarray}
In recent years,  the second-order dynamics approach, as one of the powerful frameworks for finding the optimal solution set of the problem (\ref{g}),
has attracted the interest of many researchers. In order to solve the problem (\ref{g}),  Su et al. \cite{SBC2016} propose the  second-order  dynamical system with asymptotically vanishing damping
\begin{eqnarray}\label{AVD}
\ddot{x}(t)+\frac{\alpha}{t}\dot{x}(t)+\nabla g(x(t))=0,
\end{eqnarray}
where $\alpha\ge3$  and $\frac{\alpha}{t}$ is the vanishing damping coefficient. They show that  the asymptotic convergence rate of the function value along the trajectory generated by  the system (\ref{AVD}) is $\mathcal{O}\left ( \frac{1}{t^2}\right )$. In particular, in the case $\alpha=3$,
the system (\ref{AVD}) can be regarded as a continuous version of the fast gradient method of Nesterov, see \cite{y1983} for more details.
To improve the convergence rates of the   system (\ref{AVD}),  Attouch et al. \cite{acr2019} propose the  second-order  dynamical system with asymptotically vanishing damping and  time scaling
$
\ddot{x}(t)+\frac{\alpha}{t}\dot{x}(t)+\delta(t)\nabla g(x(t))=0,
$
where $\alpha >0$  and $\delta :\left[ t_0,+\infty \right) \rightarrow \left[ 0,+\infty \right)$ is the time scaling function.
They show that the fast convergence rate of the  function value  along the trajectory is  $\mathcal{O}\left ( \frac{1}{\delta(t)t^2}\right )$  as time $t$ approaches infinity. In the last few years,
many successful treatments of the convergence properties of the second-order system  with general damping and time scaling coefficients for the problem (\ref{g}) have been  investigated from several different perspectives. We refer the readers to \cite{mp18a,botm22,al21amo,jems,mor,fcm25,hecoap} for more details.

Recently, in order to establish  strong convergence results of the  trajectory generated by the dynamical system (\ref{AVD}),     Attouch et al. \cite{ACr2018} introduce the   second-order dynamical system  involving a Tikhonov regularization term
\begin{eqnarray}\label{in1.2}
\ddot{x} ( t )+\frac{\alpha}{t}\dot{x}( t )+\nabla g( x ( t ) )+\epsilon ( t )x ( t )=0,
\end{eqnarray}
where $\alpha >0$ and $\epsilon :\left[ t_0,+\infty \right) \rightarrow \left[ 0,+\infty \right)$   is the Tikhonov regularization function. They first obtain the fast convergence  rate of the function  value along the trajectory generated by the system (\ref{in1.2}). Then, they establish  strong convergence results of the  trajectory generated by the system (\ref{in1.2}) towards the minimal norm solution of the  problem (\ref{g}).
Subsequently,   to improve the convergence rate of the function value along the trajectory generated by the  system  (\ref{in1.2}), Xu and Wen \cite{xuwen} introduce the  Tikhonov regularized second-order dynamical system  with time scaling
$
\ddot{x} ( t )+\frac{\alpha}{t}\dot{x}( t )+\delta(t)\Big(\nabla g( x ( t ) )+\epsilon ( t )x ( t )\Big)=0.
$
They show that the convergence rate of the   function value along the trajectories, as it converges towards the optimal value of the problem (\ref{g}), can be faster than $o\left ( \frac{1}{t^2}\right )$. They also  prove that the trajectory converges strongly to  the minimal norm solution of the  problem (\ref{g}).

More recently, based on the non-autonomous version of the Polyak  heavy ball method, Attouch et al. \cite{attjde22}  introduce the   Tikhonov regularized second-order dynamical system
\begin{eqnarray}\label{sys12}
\ddot{x}( t ) + \alpha \sqrt{\epsilon ( t )}\dot{x}( t ) +\nabla g( x( t ) ) +\epsilon ( t )x( t ) =0.
\end{eqnarray}
 Note that the viscous damping coefficient  is proportional to the square root of the Tikhonov regularization parameter    in   the system (\ref{sys12}). By properly tuning these parameters, they achieve a fast convergence rate of the function value and strong convergence of the trajectory towards the minimum norm solution, which, in turn, leads to an improvement in the results obtained in  \cite{ACr2018}. In particular, when $\epsilon ( t )=\frac{1}{t^p}$ and $ 0 <p<2 $, Attouch et al. \cite{attjde22} also study the convergence properties of the system
\begin{eqnarray}\label{012ac}
\ddot{x}( t ) +\frac{\alpha}{t^{\frac{p}{2}}}\dot{x}( t ) +\nabla g( x( t ) ) +\frac{1}{t^p}x( t ) =0.
\end{eqnarray}
As an extension of the dynamical system  (\ref{012ac}), L\'aszl\'o \cite{qp2023} gives  convergence analysis of the   system
\begin{eqnarray}\label{DS1}
\ddot{x}( t ) +\frac{\alpha}{t^q}\dot{x}( t ) +\nabla g( x( t ) ) +\frac{a}{t^p}x( t ) =0,
\end{eqnarray}
where $\alpha >0$, $b>0$, $0<q<1$ and $0<p<q+1$.  L\'aszl\'o \cite{2025LC} also gives convergence analysis of an inertial proximal algorithm obtained by the temporal  discretization of the   system (\ref{DS1}).
Furthermore, in order to improve the convergence rate of the function value along the trajectory generated by the system  (\ref{012ac}), Bagy et al. \cite{jnva} investigate   the   Tikhonov regularized second-order dynamical system with time scaling
$
\ddot{x}( t ) +\frac{\alpha}{t^{\frac{p}{2}}}\dot{x}( t ) +\delta(t)\nabla g( x( t ) ) +\frac{b}{t^p}x( t ) =0.
$
 For more   convergence results   for Tikhonov regularized second-order   dynamical systems with    asymptotically vanishing damping and time scaling terms, we refer the readers to  \cite{jems,TRIGS1,renjota,jota24,24amop10} for more details.

It appears that Tikhonov regularized second-order   dynamical systems, which  are  driven by the Hessian of the objective functions,  can induce  stabilization of the trajectories. Due to these situations,
 many researchers have focused their research on the development of Tikhonov regularized second-order dynamical systems with a Hessian-driven damping term. More  precisely,  to induce stabilization of the trajectory generated by the system (\ref{in1.2}), Bo\c{t} et al.  \cite{bot21mp}  propose the Tikhonov regularized second-order dynamical system with Hessian-driven damping
\begin{eqnarray}\label{botsys}
\ddot{x}( t ) +\frac{\alpha}{t}\dot{x}( t ) +\beta \nabla ^2g( x( t ) ) \dot{x}( t ) +\nabla g( x( t ) ) +\epsilon ( t ) x( t ) =0,
\end{eqnarray}
where $\alpha \ge 3$ and $\beta >0$. They show that  the convergence rate of the function value along the trajectory generated by the system (\ref{botsys}) is $\mathcal{O}\left( \frac{1}{t^2} \right)$ for $\alpha =3
$ and  $o\left( \frac{1}{t^2} \right)$ for $\alpha >3$, respectively. They  also  obtain strong convergence of the trajectory  to the   minimal norm solution.
To further accelerate the fast convergence rate of the system (\ref{botsys}), by using a time scaling technique,
Bagy et al. \cite{A.C. Bagy} consider the    dynamical  system with  Hessian-driven damping and  time scaling
\begin{eqnarray}\label{DS2}
\ddot{x}( t ) +\alpha \dot{x}( t ) +\beta \nabla ^2g( x( t ) ) \dot{x}( t ) +\delta( t ) \nabla g( x( t ) ) +ax( t ) =0,
\end{eqnarray}
where $a>0$  is a positive constant.   They also obtain  fast convergence rate of the function value and strong convergence of the  trajectory to the minimum norm solution. Bagy et al. \cite{2024AC} give  an inertial proximal algorithm
obtained by the temporal  discretization of  continuous dynamical system (\ref{DS2}).

On the other hand,  in order to  induce  stabilization of the trajectory generated by the system (\ref{sys12}),
Attouch et al. \cite{amo23att}   propose the   dynamical system  with Hessian-driven damping
 \begin{equation*}
 \ddot{x}(t)+\alpha \sqrt{\epsilon(t)}\dot{x}(t)+\beta\nabla^2g(x(t))\dot{x}(t)+\nabla g(x(t))+\epsilon(t)x(t)=0.
 \end{equation*}
They demonstrate that, simultaneously, the function value  exhibits a fast convergence rate, the gradient  exhibits a fast convergence rate towards zero, and the trajectory shows strong convergence towards the minimum norm solution of the problem (\ref{g}).
In particular, when $\epsilon ( t )=\frac{1}{t^p}$ and $ 0 <p<2 $, they also study the convergence properties of the system
$
\ddot{x}( t ) +\frac{\alpha}{t^{\frac{p}{2}}}\dot{x}( t )+\beta\nabla^2g(x(t))\dot{x}(t)+\nabla g(x(t)) +\frac{1}{t^p}x( t ) =0.
$
More results on the convergence rates of Tikhonov regularized second-order dynamical system with Hessian-driven damping for solving  the  problem (\ref{g}) can be found in \cite{jems,siam21,coap25s,coap24k,zhong}.

In this paper, we propose a modified version of the  second-order  dynamical  system (\ref{DS1}) by adding both Hessian-driven damping and time scaling terms
\begin{eqnarray}\label{DS}
\ddot{x}( t) +\frac{\alpha}{t^q}\dot{x}( t ) +\beta \nabla ^2g( x( t ) ) \dot{x}( t ) +\delta ( t ) \nabla g( x( t ) ) +\frac{a}{t^p}x( t ) =0,
\end{eqnarray}
where  $t\geq t_0>0$,  $\alpha, \beta, a, p>0 $ and $0<q<1$,  $\frac{\alpha}{t^q}$  is the viscous damping,  $\delta:[t_0,+\infty)\rightarrow(0,+\infty)$ is the time scaling function, and $\frac{a}{t^p}x( t )$   is the Tikhonov regularization term.
We  also assume that  $\delta(t) $ is a non-decreasing differentiable function and $\lim _{t\rightarrow +\infty}\delta ( t ) =+\infty$.
The purpose of this paper is to obtain some new convergence properties of the solution trajectory generated by the system (\ref{DS}).
Our contributions can be more specifically stated as follows:
\begin{itemize}

\item[{\rm (i)}] Under mild assumptions on the parameters, we  show that  the trajectory $x(t)$ generated by the  system (\ref{DS}) converges weakly to a minimizer of the problem (\ref{g}). We also establish a
simultaneous result concerning both the convergence results of the objective function value  in (\ref{g}) and the strong convergence of the trajectory generated by the   system (\ref{DS}).
\item[{\rm (ii)}] We propose a new inertial proximal gradient type   algorithm obtained by the  temporal  discretization of  the  system (\ref{DS}). We also study the convergence rates  of the proposed  algorithm.
\item[\rm(iii)] Through    numerical experiments, we demonstrate that the system (\ref{DS}) performs better than the  systems (\ref{DS1}) and (\ref{DS2}) in energy error and iteration error. We also show that the algorithm from temporal discretization of system (\ref{DS}) outperforms those from systems    (\ref{DS1}) and (\ref{DS2}).
\end{itemize}

 The paper is organized as follows. In Section 2, we  first recall  some preliminary results which will be used in the sequel and we obtain the existence and uniqueness of the solution trajectory generated by the  system (\ref{DS}).
In Section 3, we  present  fast  convergence rate of the objective function value and  weak convergence  of the trajectory generated by the system (\ref{DS}).
We also obtain   strong convergence of the trajectory  generated by the  system (\ref{DS}).
In Section 4, we  study   convergence properties of the  algorithm obtained by the temporal  discretization of    system (\ref{DS}).
In Section 5, we give numerical experiments to illustrate the obtained results.

\section{Preliminaries}
Throughout this paper, let $\mathcal{X}$ be a real Hilbert space equipped with the inner
product $ \langle\cdot,\cdot\rangle$ and the norm $\|\cdot\|$. The notations $L^1[t_0, +\infty)$ and $L_{loc}^1[t_0, +\infty)$  denotes the family of integrable and locally integrable functions, respectively.
Let $\psi:\mathcal{X}\rightarrow \mathbb{R}$ be  a   real-valued  function.  We say that $\nabla \psi$ is  Lipschitz continuous on $\mathcal{X}$ iff there exists  $L_\psi>0$ such that
$
\lVert \nabla \psi( x ) -\nabla \psi( y ) \rVert \le L_\psi \|x-y \| ,$  $\forall x, y\in \mathcal{X}.
$
For any  given $\sigma >0$, we say that $\psi$ is a $\sigma$-strongly convex function iff $\psi( \cdot ) -\frac{\sigma}{2}\lVert \cdot \rVert ^2$ is a convex function. Obviously,
\begin{eqnarray}\label{L}
\psi( y ) \ge \psi( x ) +\left< \nabla \psi( x ) ,y-x \right> +\frac{\sigma}{2}\lVert y-x \rVert ^2, \forall x, y\in \mathcal{X}.
\end{eqnarray}

The following important properties will be used in the sequel.
\begin{lemma}\label{L1}\textup{ \cite[Lemma 14]{qp2023}}
Let $F:[ t_0,+\infty ) \rightarrow \mathbb{R}$ be locally absolutely continuous and bounded from
below. Suppose that there exists $G\in L^1[ t_0,+\infty )$ such that
$
\frac{d}{dt}F( t ) \le G( t ),$ for almost every $ t \geq t_0 .
$
Then,  $\lim _{t\rightarrow +\infty}F( t )\in \mathbb{R}$.
\end{lemma}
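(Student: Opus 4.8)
The plan is to reduce the statement to the elementary fact that a non-increasing function that is bounded below has a finite limit at infinity. The device is to subtract off an antiderivative of $G$ so as to convert the differential inequality into monotonicity. First I would introduce the auxiliary function
\begin{eqnarray*}
H(t):=F(t)-\int_{t_0}^{t}G(s)\,ds,\qquad t\ge t_0.
\end{eqnarray*}
Since $G\in L^1[t_0,+\infty)$, its indefinite integral is (locally) absolutely continuous, and $F$ is locally absolutely continuous by hypothesis, so $H$ is locally absolutely continuous as well. Its almost-everywhere derivative is $\tfrac{d}{dt}H(t)=\tfrac{d}{dt}F(t)-G(t)\le 0$ by assumption, and because a locally absolutely continuous function is the integral of its derivative, $H(t_2)-H(t_1)=\int_{t_1}^{t_2}H'(s)\,ds\le 0$ for all $t_1\le t_2$. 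Hence $H$ is non-increasing on $[t_0,+\infty)$.

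Next I would verify that $H$ is bounded from below, which is where both hypotheses enter. Using $G\in L^1[t_0,+\infty)$ we have $\int_{t_0}^{t}G(s)\,ds\le\int_{t_0}^{+\infty}|G(s)|\,ds=:\|G\|_{1}<+\infty$ uniformly in $t$, and using that $F$ is bounded below, say $F\ge m$, we obtain
\begin{eqnarray*}
H(t)=F(t)-\int_{t_0}^{t}G(s)\,ds\ge m-\|G\|_{1}>-\infty.
\end{eqnarray*}
A non-increasing function that is bounded below converges as $t\to+\infty$ to a finite limit $\ell:=\lim_{t\to+\infty}H(t)\in\mathbb{R}$.

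Finally I would reassemble $F$ from $H$. Because $G\in L^1[t_0,+\infty)$, the limit $\int_{t_0}^{+\infty}G(s)\,ds$ exists and is finite, so
\begin{eqnarray*}
\lim_{t\to+\infty}F(t)=\lim_{t\to+\infty}\Big(H(t)+\int_{t_0}^{t}G(s)\,ds\Big)=\ell+\int_{t_0}^{+\infty}G(s)\,ds\in\mathbb{R},
\end{eqnarray*}
which is the desired conclusion. I do not expect a genuine obstacle here, as the argument is elementary once $H$ is introduced; the only point requiring care is the justification that $H'\le 0$ almost everywhere together with local absolute continuity actually forces $H$ to be non-increasing, i.e. invoking the fundamental theorem of calculus for absolutely continuous functions rather than merely differentiating pointwise. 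It is also worth noting that the convexity, Hilbert-space structure, and the specific form of $G$ play no role whatsoever in this lemma; it is a purely real-analytic statement, which is exactly why it serves as a convenient black box for the convergence analysis in the later sections.
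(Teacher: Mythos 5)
Your proof is correct and complete. The paper itself offers no proof of this lemma---it simply cites \cite[Lemma 14]{qp2023}---and your argument (subtracting the antiderivative of $G$ to produce a non-increasing, bounded-below, locally absolutely continuous function, then adding back the convergent integral) is precisely the standard proof given in that reference, with the key technical point, the use of the fundamental theorem of calculus for absolutely continuous functions to pass from $H'\le 0$ a.e.\ to monotonicity, correctly identified and handled.
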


\begin{lemma}\label{L2}
Let $\alpha >0$, $0<q<1$ and $u:[t_0,+\infty ) \rightarrow \mathbb{R}$ be a continuously differentiable function. Suppose that there exists a constant $L\in \mathbb{R}$ such that $\lim _{t\rightarrow +\infty}( \alpha -qt^{q-1} ) u( t ) +t^q\dot{u}( t )=L.$
Then, $\lim _{t\rightarrow +\infty}u( t )=\frac{L}{\alpha}$.
 \end{lemma}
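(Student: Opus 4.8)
The plan is to reduce the statement to one application of L'Hôpital's rule (equivalently, a continuous Stolz--Cesàro argument) after multiplying $u$ by a suitable integrating factor. Write the hypothesis as $h(t):=(\alpha-qt^{q-1})u(t)+t^q\dot u(t)$, so that $h(t)\to L$. Since $t^{-q}h(t)=\dot u(t)+(\alpha t^{-q}-qt^{-1})u(t)$, the natural integrating factor is $\mu(t):=t^{-q}\exp\!\left(\tfrac{\alpha}{1-q}t^{1-q}\right)$, chosen precisely so that $\mu'(t)/\mu(t)=\alpha t^{-q}-qt^{-1}$.

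First I would observe that $v(t):=\mu(t)u(t)$ is continuously differentiable (using $u\in C^1$) and satisfies $v'(t)=\mu(t)\,t^{-q}h(t)$. Writing $u(t)=v(t)/\mu(t)$, I would then compute the ratio of derivatives,
\begin{equation*}
\frac{v'(t)}{\mu'(t)}=\frac{\mu(t)\,t^{-q}h(t)}{\mu(t)\,(\alpha t^{-q}-qt^{-1})}=\frac{h(t)}{\alpha-qt^{q-1}}\longrightarrow\frac{L}{\alpha}\qquad(t\to\infty),
\end{equation*}
where the limit uses $t^{q-1}\to0$ (because $q<1$) together with $h(t)\to L$.

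Next I would verify the hypotheses of the $\infty/\infty$ form of L'Hôpital's rule for the quotient $v/\mu$. Since $0<q<1$ we have $1-q>0$, so $t^{1-q}\to+\infty$ and hence $\mu(t)\to+\infty$; moreover $\mu'(t)=\mu(t)\,t^{-1}(\alpha t^{1-q}-q)$ is strictly positive for all sufficiently large $t$, so $\mu$ is eventually strictly increasing and $\mu'$ does not vanish. L'Hôpital's rule then yields $\lim_{t\to+\infty}u(t)=\lim_{t\to+\infty}v(t)/\mu(t)=\lim_{t\to+\infty}v'(t)/\mu'(t)=L/\alpha$, which is the desired conclusion.

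I expect the only delicate points to be technical rather than conceptual: confirming the growth $\mu(t)\to+\infty$ so that the $\infty/\infty$ form genuinely applies (the exponential factor dominates the polynomial prefactor $t^{-q}$), and correctly tracking the subdominant term $-qt^{-1}$ in $\mu'/\mu$, since it is exactly this term that produces the harmless factor $t^{q-1}\to0$ in the final ratio. Getting these asymptotics right is the main thing to be careful about.
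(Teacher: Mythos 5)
Your proof is correct and takes essentially the same route as the paper: the same integrating factor $\mu(t)=t^{-q}e^{\frac{\alpha t^{1-q}}{1-q}}$ and the same derivative-ratio computation $h(t)/(\alpha-qt^{q-1})\to L/\alpha$ via the $\infty/\infty$ form of L'H\^opital's rule. The only difference is presentational: the paper writes $\mu(t)u(t)$ as an explicit integral from $s$ to $t$ plus a boundary term and applies L'H\^opital to that integral over $\mu(t)$, whereas you apply L'H\^opital directly to $v/\mu$ with $v=\mu u$; by the fundamental theorem of calculus these are the same argument.
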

 \begin{proof}
 Let $( \alpha -qt^{q-1} ) u( t ) +t^q\dot{u}( t ) =y( t )$. Clearly,
$
\left( \frac{\alpha}{t^q}-\frac{q}{t} \right) u( t )+\dot{u}( t ) =\frac{y( t )}{t^q}.
$
Then, it is easy to see that
$
\left( \frac{\alpha}{t^q}-\frac{q}{t} \right) t^{-q}e^{\frac{\alpha t^{1-q}}{1-q}}u\left( t \right) +t^{-q}e^{\frac{\alpha t^{1-q}}{1-q}}\dot{u}( t )=\frac{d}{dt}\left( t^{-q}e^{\frac{\alpha t^{1-q}}{1-q}}u( t ) \right)  =t^{-2q}e^{\frac{\alpha t^{1-q}}{1-q}}y( t ).
$
Let $s<t$. Integrating  the last equality from $s$ to $t$, we obtain
$
t^{-q}e^{\frac{\alpha t^{1-q}}{1-q}}u( t) -s^{-q}e^{\frac{\alpha s^{1-q}}{1-q}}u( s ) =\int_s^t{\frac{e^{\frac{\alpha \tau ^{1-q}}{1-q}}y( \tau )}{\tau ^{2q}}}d\tau.
$
It follows that
\begin{eqnarray}\label{60}
u( t ) =t^{q}e^{-\frac{\alpha t^{1-q}}{1-q}}\left( \int_s^t{\frac{e^{\frac{\alpha \tau ^{1-q}}{1-q}}y( \tau )}{\tau ^{2q}}}d\tau +s^{-q}e^{\frac{\alpha s^{1-q}}{1-q}}u( s ) \right).
\end{eqnarray}
Note that
$
\underset{t\rightarrow +\infty}{\lim}\frac{\int_s^t{\frac{e^{\frac{\alpha \tau ^{1-q}}{1-q}}y( \tau )}{\tau ^{2q}}}d\tau}{t^{-q}e^{\frac{\alpha t^{1-q}}{1-q}}}=\underset{t\rightarrow +\infty}{\lim}\frac{\frac{e^{\frac{\alpha t^{1-q}}{1-q}}y( t )}{t^{2q}}}{\left( -qt^{-q-1}+\alpha t^{-2q} \right) e^{\frac{\alpha t^{1-q}}{1-q}}}=\underset{t\rightarrow +\infty}{\lim}\frac{y( t )}{-qt^{q-1}+\alpha}=\frac{L}{\alpha}
$
and $\lim _{t\rightarrow +\infty}t^{q}e^{-\frac{\alpha t^{1-q}}{1-q}}s^{-q}e^{\frac{\alpha s^{1-q}}{1-q}}u\left( s \right) =0.$ Then, it follows from $(\ref{60})$ that
$
\underset{t\rightarrow +\infty}{\lim}u( t ) =\frac{L}{\alpha}.
$
 The proof is complete.\qed
\end{proof}

\begin{lemma}\label{L3}\textup{ \cite[Lemma 15]{qp2023}}
Let $S\subseteq \mathcal{X}$ be a nonempty set and $x:[t_ 0,+\infty) \rightarrow \mathcal{X}$. Assume
that the following statements are satisfied.
\begin{itemize}
\item[{\rm (i)}]For every $z\in S$,  $\lim _{t\rightarrow +\infty}\lVert x( t ) -z \rVert$ exists;
\item[{\rm (ii)}] Every weak sequential limit point of $x( t )$, as $t\rightarrow +\infty$, belongs to  $S$.
\end{itemize}
Then, $x(t)$ converges weakly to an element in $S$ as $t\rightarrow +\infty$.
\end{lemma}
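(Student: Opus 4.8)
The plan is to follow the continuous-time analogue of Opial's lemma, using hypothesis (i) twice to force uniqueness of the weak limit. First I would extract boundedness of the trajectory: since $S\neq\emptyset$, fix any $z_0\in S$; hypothesis (i) guarantees that $\lim_{t\rightarrow+\infty}\lVert x(t)-z_0\rVert$ exists and is finite, so $t\mapsto\lVert x(t)\rVert$ is bounded by the triangle inequality. Because $\mathcal{X}$ is a Hilbert space, hence reflexive, every bounded sequence admits a weakly convergent subsequence; in particular $x(t)$ possesses at least one weak sequential limit point as $t\rightarrow+\infty$, and by (ii) every such point lies in $S$.

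The heart of the argument is to show that this weak limit point is unique. Suppose $x_1$ and $x_2$ are two weak sequential limit points, realized along sequences $t_n\rightarrow+\infty$ and $s_n\rightarrow+\infty$ with $x(t_n)\rightharpoonup x_1$ and $x(s_n)\rightharpoonup x_2$; by (ii) both $x_1,x_2\in S$. The key observation is the identity
\[
\lVert x(t)-x_1\rVert^2-\lVert x(t)-x_2\rVert^2=-2\left\langle x(t),\,x_1-x_2\right\rangle+\lVert x_1\rVert^2-\lVert x_2\rVert^2,
\]
in which the left-hand side converges as $t\rightarrow+\infty$: because $x_1,x_2\in S$, hypothesis (i) applies to each anchor separately, so both $\lVert x(t)-x_1\rVert^2$ and $\lVert x(t)-x_2\rVert^2$ have limits. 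Consequently $\left\langle x(t),x_1-x_2\right\rangle$ converges to some limit $\ell$ as $t\rightarrow+\infty$.

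Evaluating this limit along the two sequences and using weak convergence of $x(t_n)$ and $x(s_n)$ against the fixed vector $x_1-x_2$ yields $\ell=\left\langle x_1,x_1-x_2\right\rangle$ and $\ell=\left\langle x_2,x_1-x_2\right\rangle$ simultaneously; subtracting gives $\lVert x_1-x_2\rVert^2=0$, hence $x_1=x_2$. Thus all weak sequential limit points coincide; call the common point $x^*\in S$. Finally I would upgrade this to weak convergence of the whole trajectory: if $x(t)$ did not converge weakly to $x^*$, there would exist $\xi\in\mathcal{X}$, $\varepsilon>0$ and a sequence $t_n\rightarrow+\infty$ with $\lvert\left\langle x(t_n)-x^*,\xi\right\rangle\rvert\ge\varepsilon$ for all $n$; but boundedness of $(x(t_n))$ furnishes a weakly convergent subsequence whose limit must equal $x^*$, contradicting the lower bound. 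Hence $x(t)\rightharpoonup x^*\in S$. I expect the identity manipulation to be the genuinely essential step—the subtle point being that hypothesis (i) must be invoked at \emph{both} anchors $x_1$ and $x_2$ so that $\left\langle x(t),x_1-x_2\right\rangle$ acquires a limit—whereas the boundedness and subsequence-extraction arguments are routine consequences of reflexivity.
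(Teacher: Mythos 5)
Your proof is correct, and it is the standard continuous-time Opial argument: boundedness from (i), uniqueness of weak cluster points via the polarization identity $\lVert x(t)-x_1\rVert^2-\lVert x(t)-x_2\rVert^2=-2\langle x(t),x_1-x_2\rangle+\lVert x_1\rVert^2-\lVert x_2\rVert^2$ with hypothesis (i) invoked at both anchors, and a subsequence extraction to pass from cluster points to convergence of the whole trajectory. The paper does not prove this lemma itself but cites it as Lemma 15 of L\'aszl\'o's work \cite{qp2023}, whose proof is essentially this same argument, so no further comparison is needed.
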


 At the end of this section,  using a similar argument as in \cite[Theorem 3.1]{xuwen}, \cite[Theorem 12]{qp2023} and \cite[Theorem 2]{coap25s}, we establish the existence and uniqueness of the strong global solution of the  system $(\ref{DS})$.
 \begin{theorem}\label{the2.1}
Suppose that $\nabla g$ is $L_g$-Lipschitz continuous on $\mathcal{X}$. Then, for every initial point $( x( t_0 ) ,\dot{x}( t_0 ) )\in \mathcal{X}\times \mathcal{X}$, the system $(\ref{DS})$ admits a unique strong global solution $x:[ t_0,+\infty ) \rightarrow \mathcal{X}$.
\end{theorem} 
\begin{proof}  
Let $X( t ): =( x( t ) ,y( t ) )$ and
$
F( t,u,v ): =\left( v-\beta \nabla g(u) ,-\frac{\alpha}{t^q}v-\left( \delta ( t ) -\frac{\beta\alpha}{t^q} \right) \nabla g(u ) -\frac{a}{t^p}u \right).
$
Then, the system $(\ref{DS})$ becomes
\begin{eqnarray}\label{2.3}
	\dot{X}( t ) =F( t,X( t ) ).
\end{eqnarray}
Clearly, for every $( u,v ) $ and $ ( \bar{u},\bar{v} ) \in \mathcal{X}\times \mathcal{X}$, it follows   that
\begin{eqnarray*}\label{13}
\small{\begin{split}
&\lVert F( t,u,v ) -F( t,\bar{u},\bar{v} ) \rVert\\
=&\sqrt{\lVert ( v-\bar{v} ) +\beta \left( \nabla g( \bar{u} ) -\nabla g( u) \right) \rVert ^2+\left\lVert \frac{\alpha}{t^q}( \bar{v}-v ) +\left( \delta ( t ) -\frac{\beta\alpha}{t^q} \right)( \nabla g( \bar{u} ) -\nabla g( u )) +\frac{a}{t^p}( \bar{u}-u ) \right\rVert ^2}\\
\le &\sqrt{\left( 2+\frac{2\alpha ^2}{t^{2q}} \right) \lVert v-\bar{v} \rVert ^2+\left( 2\beta ^2+4\left( \delta ( t ) -\frac{\beta\alpha}{t^q} \right) ^2\right)\lVert \nabla g( \bar{u}) -\nabla g( u ) \rVert ^2+\frac{4a^2}{t^{2p}}\lVert \bar{u}-u \rVert ^2}.
\end{split}}
\end{eqnarray*}
This together  with the $L_g$-Lipschitz continuity of $\nabla g$   gives
\begin{eqnarray*}
\begin{split}
&\lVert F( t,u,v ) -F( t,\bar{u},\bar{v} ) \rVert\\
\le& \sqrt{\left( 2+\frac{2\alpha ^2}{t^{2q}} \right) \lVert v-\bar{v} \rVert ^2+\left( 2\beta ^2L_{g}^{2}+4L_{g}^{2}\left( \delta ( t ) -\frac{\beta\alpha}{t^q} \right) ^2+\frac{4 a^2}{t^{2p}} \right) \lVert \bar{u}-u \rVert ^2}\\
\le &\sqrt{2+\frac{2\alpha ^2}{t^{2q}}+2\beta ^2L_{g}^{2}+4L_{g}^{2}\left( \delta ( t ) -\frac{\beta\alpha}{t^q} \right) ^2+\frac{4 a^2}{t^{2p}}}\sqrt{\lVert v-\bar{v} \rVert ^2+\lVert \bar{u}-u \rVert ^2}\\
\le& \left( \sqrt{2}+\sqrt{2}\frac{\alpha}{t^q}+\sqrt{2}\beta L_g+2L_g\left| \delta ( t) -\frac{\beta\alpha}{t^q} \right| +\frac{2 a}{t^p} \right) \lVert ( u,v ) -( \bar{u},\bar{v} ) \rVert.
\end{split}
\end{eqnarray*}
Set $L( t ):=\sqrt{2}+\sqrt{2}\frac{\alpha}{t^q}+\sqrt{2}\beta L_g+2L_g\left| \delta ( t ) -\frac{\beta\alpha}{t^q} \right| +\frac{2 a}{t^p}$. Clearly, $L( \cdot ) \in L_{loc}^{1}( [ t_0,+\infty ) )$ and
\begin{eqnarray}\label{2.5}
\lVert F( t,u,v ) -F( t,\bar{u},\bar{v} ) \rVert \le L( t ) \lVert( u,v ) -( \bar{u},\bar{v} ) \rVert.
\end{eqnarray} On the other hand, for all $ u,  v\in \mathcal{X}$, it is  easy to show that
\begin{eqnarray*}
\begin{split}
&\int_{t_0}^T{\lVert F( t,u,v ) \rVert dt}\\
=&\int_{t_0}^T{\sqrt{\lVert v-\beta \nabla g( u ) \rVert ^2+\left\lVert -\frac{\alpha}{t^q}v-\left( \delta ( t ) -\frac{\beta\alpha}{t^q} \right) \nabla g( u ) -\frac{a}{t^p}u \right\rVert ^2}}dt\\
\le &\int_{t_0}^T{\sqrt{\left( 2+\frac{2\alpha ^2}{t^{2q}} \right) \lVert v \rVert ^2+\left(2\beta ^2+4\left( \delta ( t ) -\frac{\beta\alpha}{t^q} \right) ^2\right)\lVert \nabla g( u ) \rVert ^2+\frac{4 a^2}{t^{2p}}\lVert u \rVert ^2}}dt\\
\le& \sqrt{\lVert v \rVert ^2+\lVert \nabla g( u ) \rVert ^2+\lVert u \rVert ^2}\int_{t_0}^T{\sqrt{2+\frac{2\alpha ^2}{t^{2q}}+2\beta ^2+4\left( \delta ( t ) -\frac{\beta\alpha}{t^q} \right) ^2+\frac{4 a^2}{t^{2p}}}}dt.
\end{split}
\end{eqnarray*}
Then,
$
F\left( \cdot ,u,v \right) \in L_{loc}^{1}\left( \left[ t_0,+\infty \right) ,\ \mathcal{X}\times \mathcal{X} \right).
$ 
 Combining this   with ${(\ref{2.5})}$ and $L( \cdot ) \in L_{loc}^{1}( [ t_0,+\infty ) )$,  it follows from Cauchy-Lipschitz-Picard Theorem   that there exists a unique global solution of system  ${(\ref{2.3})}$. The proof  is complete. \qed
\end{proof}

\section{Asymptotic analysis of  the    system (\ref{DS})}

In this section, based on Lyapunov's ananlysis, we first establish  fast
convergence properties of  the objective function value along the trajectory generated
by the  system (\ref{DS}).   Then, under a tuning of these parameters, we improve the
convergence rate of the objective function value  and  show that the trajectory generated by the system $(\ref{DS})$ converges weakly to a minimizer of the  problem $(\ref{g})$.  Finally, we  show that both the fast convergence rate of the function value and the strong convergence of the trajectory can be achieved simultaneously.
\begin{theorem}\label{the4.1}
Let $q<p\leq 2$. Assume that $a\geq q( 1-q ) $ for $p=2$.  Suppose that
\begin{eqnarray}\label{b}
 \mbox{ there exists } K_1>0  \mbox{ such that  } t \dot{\delta}( t ) <K_1 \delta ( t ), \mbox{ for } t  \mbox{ big enough}.
\end{eqnarray}
 Let  $x:[ t_0,+\infty)\rightarrow\mathcal{X}$ be the unique global solution of the system $(\ref{DS})$.
Then, for any $x^*\in \textup{argmin}~g$, it holds that
\begin{enumerate}
\item[{\rm (i)}] If $2q<p\leq2$, one has $g( x( t ) ) -g( x^* ) =\mathcal{O}\left( \frac{1}{\delta ( t ) t^{2q}} \right)$ and $\lVert \dot{x}( t ) +\beta \nabla g( x( t )) \rVert =\mathcal{O}\left( \frac{1}{t^q} \right)$, as $t\rightarrow +\infty$.

\item[{\rm (ii)}] If $q<p\le 2q$, one has $g( x( t ) ) -g( x^* )=\mathcal{O}\left( \frac{1}{\delta ( t ) t^p} \right)$ and $\lVert \dot{x}( t ) +\beta \nabla g( x( t ))  \rVert =\mathcal{O}\left( \frac{1}{t^{\frac{p}{2}}} \right)$, as $t\rightarrow +\infty$.
\end{enumerate}
\end{theorem}
\begin{proof} Let $0<b<\alpha$. We introduce the   energy function
\begin{eqnarray}\label{2q}
\begin{split}
\mathcal{E}_b( t )=&\left( \delta ( t ) t^{2q}-\beta ( b+2qt^{q-1}-\alpha ) t^q \right) \left( g( x( t ) ) -g( x^* ) \right)\\&+\frac{1}{2} at^{2q-p} \lVert x( t ) \rVert ^2+\frac{1}{2}\lVert b( x( t ) -x^* ) +t^q\left( \dot{x}( t ) +\beta \nabla g( x( t ) ) \right) \rVert ^2\\
&+\frac{1}{2}b( \alpha -b-qt^{q-1} )\lVert x( t ) -x^* \rVert ^2.
\end{split}
\end{eqnarray}
By $\lim _{t\rightarrow +\infty}\delta ( t ) =+\infty$, $0<q<1$ and $0<b<\alpha$, there exists $t_1 \geq t_0 $ such that $\delta ( t ) t^{2q}-\beta ( b+2qt^{q-1}-\alpha ) t^q\geq0$ and $  \frac{1}{2}b( \alpha -b-qt^{q-1} ) \geq0 $ for all $t\geq t_1.$ Then,  $\mathcal{E}_b( t )\geq 0$  for all $t\geq t_1.$

 Now, we   analyze the time derivative of $\mathcal{E}_b( t )$.
Clearly, together with $(\ref{DS})$, we obtain
\begin{eqnarray}\label{6q}
\begin{split}
 &\dot{\mathcal{E}}_b( t )\\ =& \left( \dot{\delta}( t) t^{2q}+\Big( 2\delta ( t ) t^q-\beta b-2( 2q-1 ) \beta t^{q-1}+\alpha \beta \Big) qt^{q-1} \right) ( g( x( t ) ) -g( x^* ) )\\
&+\left( b-\alpha +qt^{q-1} \right) t^q\lVert \dot{x}( t ) \rVert ^2+\frac{1}{2}a( 2q-p ) t^{2q-p-1} \lVert x( t ) \rVert ^2\\
&-a \beta t^{2q-p}\left< \nabla g( x( t ) ) ,x( t ) \right> +\left( \beta ^2qt^{2q-1}-\beta \delta ( t ) t^{2q} \right) \lVert \nabla g( x( t ) ) \rVert ^2\\
&+ \frac{1}{2}bq( 1-q ) t^{q-2} \lVert x( t ) -x^* \rVert ^2-bt^q\left< at^{-p}x( t) + (\delta ( t ) -q\beta t^{-1})\nabla g(x(t))  ,x( t ) -x^* \right>.
\end{split}
\end{eqnarray}
Let the function $\phi_t:\mathcal{X}\rightarrow \mathbb{R}$ be defined as
$
\phi_t( x) := ( \delta ( t ) - q\beta t^{-1}  ) g( x ) +\frac{a}{2t^p}\lVert x \rVert ^2.
$
Since $\lim _{t\rightarrow +\infty}\delta ( t ) =+\infty$, there exists $t_2>t_1$ such that
$
\delta ( t ) - q\beta t^{-1}\geq0 $  for all $ t\geq t_2.
$
Then, $\phi_t(x)$ is a $\frac{a}{2t^p}$-strongly convex function for all $t\geq t_2,$  which follows that for all $  t\geq t_2$,
\begin{eqnarray*}
\begin{split}
&\left( \delta ( t ) - q\beta t^{-1}  \right) g( x^* ) +\frac{a}{2t^p}\lVert x^* \rVert ^2-\left( \delta ( t ) - q\beta t^{-1}  \right) g( x( t ) ) -\frac{a}{2t^p}\lVert x( t ) \rVert ^2\\
\ge& \left< \left( \delta ( t ) - q\beta t^{-1}  \right) \nabla g( x( t ) ) +at^{-p}x( t ) ,  x^*- x( t )\right> +\frac{a}{2t^p}\lVert x( t ) -x^* \rVert ^2.
\end{split}
\end{eqnarray*}
Consequently,
\begin{eqnarray}\label{7q}
\begin{split}
&-bt^q\left<at^{-p}x( t )+ \left( \delta ( t ) - q\beta t^{-1} \right) \nabla g( x( t ) )  ,x( t ) -x^* \right> \\
\le& bt^q\left( \delta ( t ) - q\beta t^{-1}  \right) \left( g( x^* ) -g( x( t ) ) \right) + \frac{1}{2}abt^{q-p} (\lVert x^* \rVert ^2-   \lVert x( t ) \rVert ^2-  \lVert x( t ) -x^* \rVert ^2).
\end{split}
\end{eqnarray}
Further, note that
\begin{eqnarray}\label{uv}
-a \beta t^{2q-p}\left< \nabla g( x( t ) ) ,x( t ) \right> \le \frac{1}{2}a \beta t^{2q} \lVert \nabla g( x( t ) ) \rVert ^2+ \frac{1}{2}a \beta t^{2q-2p} \lVert x( t ) \rVert ^2.
\end{eqnarray}
Thus, combining $(\ref{6q})$ with $(\ref{7q})$ and $(\ref{uv})$, we get for all $ t\geq t_2,$
\begin{eqnarray}\label{8q}
\begin{split}
 \dot{\mathcal{E}}_b( t )
\le&\Big( \dot{\delta}( t ) t^{2q}+ 2q\delta ( t ) t^{2q-1}-2q( 2q-1 ) \beta t^{2q-2}\\&+\alpha \beta qt^{q-1}-b \delta ( t )t^q \Big) ( g( x( t ))-g( x^* ) )+( b-\alpha +qt^{q-1}) t^q\lVert \dot x( t ) \rVert ^2\\
&+\frac{1}{2}\left(  a( 2q-p ) t^{2q-p-1} - abt^{q-p} + a\beta t^{2q-2p}  \right) \lVert x( t ) \rVert ^2\\
&+\frac{1}{2}\left(  bq( 1-q ) t^{q-2} - abt^{q-p}  \right) \lVert x( t ) -x^* \rVert ^2\\
&+\left( \beta ^2qt^{2q-1}-\beta \delta ( t ) t^{2q}+ \frac{1}{2}a\beta t^{2q}  \right) \lVert \nabla g( x( t ) ) \rVert ^2+ \frac{1}{2}abt^{q-p} \lVert x^* \rVert ^2.
\end{split}
\end{eqnarray}

On the other hand, note that
$\frac{1}{2}\lVert b( x( t ) -x^* ) +t^q ( \dot{x}( t ) +\beta\nabla g( x( t ) )  ) \rVert ^2
\le  b^2\lVert x ( t ) -x^* \rVert ^2+2t^{2q}\lVert \dot{x}( t ) \rVert ^2+2\beta ^2 t^{2q}\lVert \nabla  g( x( t ) ) \rVert ^2.
$
Then, it follows from $(\ref{2q})$ that
\begin{eqnarray}\label{9q}
\begin{split}
\mathcal{E}_b( t )  \le& \left( \delta ( t) t^{2q}-\beta ( b+2qt^{q-1}-\alpha \right) t^q ) \left( g( x( t ) ) -g( x^* ) \right) +2t^{2q}\lVert \dot{x}( t ) \rVert ^2\\
&+2\beta ^2t^{2q}\lVert \nabla  g( x( t ) ) \rVert ^2+\frac{1}{2}b\left( \alpha +b-qt^{q-1} \right)\lVert x (t) -x^* \rVert ^2+ \frac{1}{2}at^{2q-p} \lVert x( t ) \rVert ^2.
\end{split}
\end{eqnarray}
Let $r:=\max \{q,p-q\}$ and   $K$ be an arbitrarily positive constant. By $(\ref{8q})$ and $(\ref{9q})$, we obtain that  for all $ t\geq t_2,$
\begin{eqnarray}\label{10q}
\begin{split}
&\dot{\mathcal{E}}_b( t ) +\frac{K}{t^r}\mathcal{E}_b( t )\\
\le &t^q \left( \dot{\delta}( t )t^q+2q\delta ( t )t^{q-1}-b\delta ( t )+K\delta ( t )t^{q-r}+\alpha \beta qt^{-1}-2( 2q-1 ) \beta qt^{q-2} \right.\\
&\quad\quad\quad -K\beta ( b+2qt^{q-1}-\alpha ) t^{-r} \Big) ( g( x( t ) ) -g( x^* ) )\\
&+\left( b-\alpha +qt^{q-1}+2Kt^{q-r} \right) t^q\lVert \dot{x}( t ) \rVert ^2\\
&+\frac{1}{2}\left( a( 2q-p ) t^{q-1}-ab+a\beta t^{q-p}+Kat^{q-r} \right) t^{q-p}\lVert x( t ) \rVert ^2\\
&+\left( \beta ^2qt^{-1}-\beta \delta( t ) +\frac{1}{2}a\beta+2K\beta ^2t^{-r} \right) t^{2q}\lVert \nabla g( x( t ) ) \rVert ^2\\
&+\frac{1}{2}\Big( bq( 1-q ) t^{p-2}-ab+Kb( \alpha +b-qt^{q-1} ) t^{p-q-r}\Big) t^{q-p}\lVert x( t ) -x^* \rVert ^2\\
&+ \frac{1}{2}abt^{q-p} \lVert x^* \rVert ^2.
\end{split}
\end{eqnarray}
Together with  $(\ref{b})$, $0<b<\alpha$ and $a\geq q( 1-q ) $ for $p=2$,  we can take $0<K<\min\left\{  b,\frac{\alpha -b}{2}, \frac{a}{\alpha +b} \right\}$  for $q<p<2$ and $0<K<\min \left\{ b,\frac{\alpha -b}{2}, \frac{a-q( 1-q )}{\alpha +b} \right\}$  for $p=2$. Then, we can deduce from $(\ref{10q})$ that  there exists $t_3\ge t_2$ such that
$
\dot{\mathcal{E}}_b( t ) +\frac{K}{t^r}\mathcal{E}_b( t ) \le \frac{1}{2}abt^{q-p} \lVert x^* \rVert ^2 $    for all $t\ge t_3.
$
Then, using a similar argument as in \cite[Theorem 1]{qp2023}, we can get the desired results. The proof is complete.\qed
\end{proof}

In the case that $q+1<p\le2$, we can also obtain some convergence results and integral estimates.

\begin{theorem}\label{the4.3}
Let  $q+1<p\le2$ and let  $a\ge q\left( 1-q \right) $ for $p=2$. Assume that the condition $(\ref{b})$ holds. Let  $x:[ t_0,+\infty)\rightarrow\mathcal{X}$ be the unique global solution of the system $(\ref{DS})$. Then,  for any $x^*\in \textup{argmin}~g$, the trajectory $x(t)$ is bounded,
\begin{eqnarray*}
g( x( t ) ) -g( x^* ) =\mathcal{O}\left( \frac{1}{\delta ( t ) t^{2q}} \right), \lVert \dot{x}( t ) +\beta \nabla g( x( t )) \rVert =\mathcal{O}\left( \frac{1}{t^q} \right), \mbox{ as } t\rightarrow +\infty,
\end{eqnarray*}
and
\begin{eqnarray}\label{weakz}
\left\{
\begin{split}
&\int_{t_0}^{+\infty}\delta ( t ) t^q( g(x(t)) -g(x^*) )dt <+\infty,~~\int_{t_0}^{+\infty}\ t^{q-p}\lVert x( t) \rVert ^2dt <+\infty,\\
&\int_{t_0}^{+\infty}   t^{q}\lVert \dot{x}( t) \rVert ^2dt <+\infty,~~~~~~~~~~~~~~~~~\int_{t_0}^{+\infty}\delta (t ) t^{2q}\lVert \nabla g( x( t )) \rVert ^2dt <+\infty,\\
&\int_{t_0}^{+\infty} t^{q-p}\lVert x( t ) -x^* \rVert ^2dt <+\infty.\end{split}
\right.
\end{eqnarray}
Moreover, we have $\lim _{t\rightarrow +\infty}\mathcal{E}_b( t )\in \mathbb{R}.$
\end{theorem}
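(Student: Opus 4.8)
The plan is to build everything on the energy $\mathcal{E}_b$ of $(\ref{2q})$ and on its derivative bound $(\ref{8q})$ already established in the proof of Theorem \ref{the4.1}; the single new structural feature I will exploit is that $q+1<p$ forces $q-p<-1$, so that the forcing term $\frac{ab}{2}t^{q-p}\lVert x^*\rVert^2$ on the right-hand side of $(\ref{8q})$ is integrable on $[t_0,+\infty)$. Since $0<q<1$ also gives $2q<q+1<p$, we fall inside the regime of case (i) of Theorem \ref{the4.1}, so the rates $g(x(t))-g(x^*)=\mathcal{O}(1/(\delta(t)t^{2q}))$ and $\lVert \dot x(t)+\beta\nabla g(x(t))\rVert=\mathcal{O}(1/t^q)$ are inherited directly from Theorem \ref{the4.1}(i). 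What genuinely remains to prove are the boundedness of the trajectory, the integral estimates $(\ref{weakz})$, and the convergence of $\mathcal{E}_b$.

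First I would check that, for $t$ large, every coefficient multiplying a nonnegative quantity on the right-hand side of $(\ref{8q})$ is $\le 0$. This is a routine but essential asymptotic bookkeeping: the coefficient of $(g(x(t))-g(x^*))$ is dominated by $t^q\delta(t)\big(\tfrac{\dot\delta(t)}{\delta(t)}t^q+2qt^{q-1}-b\big)$, which is eventually negative of order $-\delta(t)t^q$ by condition $(\ref{b})$ together with $\delta(t)\to+\infty$; the coefficient of $\lVert\dot x(t)\rVert^2$ is $(b-\alpha+qt^{q-1})t^q$, which is $\le -Ct^q$ since $b<\alpha$; the coefficient of $\lVert x(t)\rVert^2$, after factoring $\tfrac{a}{2}t^{q-p}$, has bracket tending to $-b<0$ because $q<1$ and $q<p$; the coefficient of $\lVert\nabla g(x(t))\rVert^2$ behaves like $-\beta\delta(t)t^{2q}$ since $\delta(t)\to+\infty$; and the coefficient of $\lVert x(t)-x^*\rVert^2$, namely $\tfrac{b}{2}t^{q-p}(q(1-q)t^{p-2}-a)$, is $\le 0$ eventually, using $p<2$ when $p<2$ and the hypothesis $a\ge q(1-q)$ when $p=2$. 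Consequently $(\ref{8q})$ collapses to $\dot{\mathcal{E}}_b(t)\le \tfrac{ab}{2}t^{q-p}\lVert x^*\rVert^2=:G(t)$ with $G\in L^1[t_0,+\infty)$.

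Because $\mathcal{E}_b(t)\ge0$ for $t$ large (as in the proof of Theorem \ref{the4.1}) and $\mathcal{E}_b$ is locally absolutely continuous, Lemma \ref{L1} applied to $F=\mathcal{E}_b$ then gives $\lim_{t\to+\infty}\mathcal{E}_b(t)\in\mathbb{R}$. In particular $\mathcal{E}_b$ is bounded, and since each of the four summands in $(\ref{2q})$ is nonnegative for $t$ large, each is bounded; reading off the term $\tfrac{b(\alpha-b-qt^{q-1})}{2}\lVert x(t)-x^*\rVert^2$ and using $\alpha-b-qt^{q-1}\to\alpha-b>0$ shows $\lVert x(t)-x^*\rVert$ is bounded, i.e. the trajectory is bounded. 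For the integral estimates I would instead keep the four strictly negative coefficients on the left, integrate the resulting inequality $\dot{\mathcal{E}}_b(t)+c_1\delta(t)t^q(g-g^*)+c_2 t^q\lVert\dot x\rVert^2+c_3 t^{q-p}\lVert x\rVert^2+c_4\delta(t)t^{2q}\lVert\nabla g\rVert^2\le G(t)$ over $[t_3,T]$, and let $T\to+\infty$; the nonnegativity of $\mathcal{E}_b(T)$ and $G\in L^1$ deliver the first four bounds in $(\ref{weakz})$.

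The one delicate point, and the place I expect the routine scheme to fail, is the last estimate $\int_{t_0}^{+\infty}t^{q-p}\lVert x(t)-x^*\rVert^2\,dt<+\infty$: in the borderline case $p=2,\ a=q(1-q)$ the coefficient of $\lVert x(t)-x^*\rVert^2$ in $(\ref{8q})$ degenerates to zero and yields nothing. I would bypass this not through $(\ref{8q})$ but via the crude bound $\lVert x(t)-x^*\rVert^2\le 2\lVert x(t)\rVert^2+2\lVert x^*\rVert^2$, which reduces the claim to the already-proved $\int_{t_0}^{+\infty}t^{q-p}\lVert x(t)\rVert^2\,dt<+\infty$ together with $\int_{t_0}^{+\infty}t^{q-p}\,dt<+\infty$, the latter being exactly where $q-p<-1$ (that is, $p>q+1$) reenters. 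This same integrability of $t^{q-p}$ is the common thread making both the $L^1$ forcing term and the final estimate work, and is the structural reason the hypothesis $p>q+1$ is imposed; the bulk of the effort is thus the sign analysis of the five coefficients in $(\ref{8q})$ rather than any isolated hard estimate.
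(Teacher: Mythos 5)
Your proposal is correct and takes essentially the same route as the paper: the same sign analysis of the coefficients in $(\ref{8q})$ driven by condition $(\ref{b})$ and $\delta(t)\to+\infty$, integration of the resulting differential inequality using the integrability of $t^{q-p}$ (i.e.\ $p>q+1$), Lemma \ref{L1} for $\lim_{t\to+\infty}\mathcal{E}_b(t)$, and trajectory boundedness read off from the $\lVert x(t)-x^*\rVert^2$ term of $\mathcal{E}_b$. The only cosmetic deviations are that the paper re-derives the two rates from the boundedness of $\mathcal{E}_b$ instead of citing Theorem \ref{the4.1}(i), and it obtains both $\int_{t_0}^{+\infty}t^{q-p}\lVert x(t)\rVert^2dt<+\infty$ and $\int_{t_0}^{+\infty}t^{q-p}\lVert x(t)-x^*\rVert^2dt<+\infty$ from trajectory boundedness plus $\int_{t_0}^{+\infty}t^{q-p}dt<+\infty$, whereas you retain the $\lVert x(t)\rVert^2$ term in the integrated inequality and then use the crude bound $\lVert x(t)-x^*\rVert^2\le 2\lVert x(t)\rVert^2+2\lVert x^*\rVert^2$ -- both handle the degenerate case $p=2$, $a=q(1-q)$ equally well.
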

\begin{proof} We first consider the coefficients of the right side of   $(\ref{8q})$.
Obviously, there exist  $t_2'\ge t_2$ and $C_1,C_2,C_3>0$ such that for all $t\ge t_2'$,
\begin{eqnarray*}
\left\{{\begin{split}
&\dot{\delta}( t ) t^{2q}+ 2q\delta ( t ) t^{2q-1}-2q( 2q-1 ) \beta t^{2q-2}+\alpha \beta  qt^{q-1}-b\delta ( t )t^q
\\ \leq& (K_1+2q)\delta ( t ) t^{2q-1}-2q( 2q-1 ) \beta t^{2q-2}+\alpha \beta  qt^{q-1}-b \delta ( t )t^q \le  -C_1\delta ( t) t^q,\\
&( b-\alpha +qt^{q-1} ) t^q\le -C_2t^{q},~~~
\beta ^2qt^{2q-1}-\beta \delta ( t ) t^{2q}+\frac{1}{2}ab\beta t^{2q}
\le -C_3 \delta ( t ) t^{2q},
\end{split}}\right.
\end{eqnarray*}
and
$\frac{1}{2} ( a( 2q-p ) t^{2q-p-1} -abt^{q-p}+ab\beta t^{2q-2p} ) t^{q-p}\|x(t)\|^2
+\frac{1}{2} (  bq( 1-q ) t^{q-2}-abt^{q-p} ) \|x(t)-x^*\|^2\le 0.
$
Combining the above inequalities and $(\ref{8q})$,  we get for all $t\geq t_2',$
\begin{eqnarray}\label{dd}
\begin{split}
&\dot{\mathcal{E}}_b( t ) +C_1\delta ( t) t^q( g( x( t ) ) -g^* ) +C_2t^{q}\lVert \dot{x}( t ) \rVert ^2+C_3 \delta ( t ) t^{2q}\lVert \nabla g( x( t ) ) \rVert ^2\\
\le &\frac{1}{2} ab\lVert x^* \rVert ^2 \frac{1}{t^{p-q}}.
\end{split}
\end{eqnarray}
 Integrating the above inequality from $t_2'$ to $t$ and note that $p>q+1$,   there exists $C_6>0$ such that
\begin{eqnarray}\label{17q}
\begin{split}
&\int_{t_2'}^t{\dot{\mathcal{E}}_b( s )}ds+C_1\int_{t_2'}^t{\delta ( s ) s^q( g( x( s )) -g(x^*) )}ds+C_2\int_{t_2'}^t s^{q}\lVert \dot{x}( s) \rVert ^2ds\\
&+C_3\int_{t_2'}^t\delta ( s ) s^{2q}\lVert \nabla g( x( s )) \rVert ^2ds
\le
\frac{1}{2}ab\lVert x^* \rVert ^2 \int_{t_2'}^t{\frac{1}{s^{p-q}}}ds\\
&\le  \frac{1}{2}ab\lVert x^* \rVert ^2 \int_{t_2'}^{+\infty}{\frac{1}{s^{p-q}}ds\le C_6}.
\end{split}
\end{eqnarray}
Then, $ \mathcal{E}_b( t )$ is bounded, which implies that the trajectory $x(t)$ is bounded and
\begin{eqnarray*}
g( x( t ) ) -g( x^* ) =\mathcal{O}\left( \frac{1}{\delta ( t ) t^{2q}} \right), \ \ \lVert \dot{x}( t ) +\beta \nabla g( x( t )) \rVert =\mathcal{O}\left( \frac{1}{t^q} \right), \mbox{ as } t\rightarrow +\infty.
\end{eqnarray*}
Naturally, it follows from $(\ref{17q})$ that
$\int_{t_0}^{+\infty}\delta ( t ) t^q( g(x(t)) -g(x^*) )dt <+\infty$, $\int_{t_0}^{+\infty}   t^{q}\lVert \dot{x}( t) \rVert ^2dt <+\infty,$ and
$\int_{t_0}^{+\infty}\delta (t ) t^{2q}\lVert \nabla g( x( t )) \rVert ^2dt <+\infty.$
From the boundedness of the trajectory $x(t)$ and $\int_{t_0}^{+\infty} t^{q-p}dt <+\infty$, we have
$
\int_{t_0}^{+\infty}\ t^{q-p}\lVert x( t) \rVert ^2dt <+\infty$ and $\int_{t_0}^{+\infty} t^{q-p}\lVert x( t ) -x^* \rVert ^2dt <+\infty.
$
Further, set $F(t):=\mathcal{E}_b( t )$ and $G(t):= \frac{1}{2}ab\lVert x^* \rVert ^2 \frac{1}{t^{p-q}}$. Note that $\int_{t_2'}^{+\infty}\frac{1}{s^{p-q}}ds\le+\infty$. Then,   together with $(\ref{dd})$ and Lemma \ref{L1},  we have  $\lim _{t\rightarrow +\infty}\mathcal{E}_b( t )\in \mathbb{R}.$
The proof is complete.\qed
\end{proof}
\begin{remark} In the case where $\beta=0$ and $\delta(t)=1$,  the system (\ref{DS}) reduces to the system (\ref{DS1}), and
  the convergence rate of $\mathcal{O}\left(\frac{1}{t^{2q}}\right)$ for the function value along the trajectory generated by the system (\ref{DS1}) has been obtained in \cite[Theorem 4]{qp2023}.  However, in Theorem \ref{the4.3}, under the condition that  $\lim _{t\rightarrow +\infty}\delta ( t ) =+\infty$,    the convergence rate of $\mathcal{O}\left(\frac{1}{\delta(t)t^{2q}}\right)$ for  the function value is obtained   for the system (\ref{DS}).
\end{remark}

In the following theorem, we  demonstrate that the trajectory generated by the system $(\ref{DS})$ converges weakly to a minimizer of the problem $(\ref{g})$.  Additionally, we show that the convergence rate of the  function value can be enhanced under the conditions specified in Theorem $\ref{the4.3}$.
\begin{theorem}\label{the4.4}
Suppose that the conditions of Theorem $\ref{the4.3}$ are satisfied.   Then, $x( t )$ converges weakly to an element of $\textup{argmin}g$, as $t\rightarrow +\infty$. Further, for any $x^*\in \mathrm{argmin} g$,
\begin{eqnarray*}
g( x( t ) ) -g( x^* ) =o\left( \frac{1}{\delta ( t ) t^{2q}} \right) \mbox{ and } \lVert \dot{x}( t ) +\beta \nabla g( x( t )) \rVert =o\left( \frac{1}{t^q} \right), \mbox{ as } t\rightarrow +\infty.
\end{eqnarray*}
\end{theorem}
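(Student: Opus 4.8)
The plan is to establish the two assertions in turn. The weak convergence of $x(t)$ to a minimizer will follow from Opial's lemma (Lemma~\ref{L3}) applied with $S=\textup{argmin}\,g$, while the refinement of the $\mathcal{O}$-rates of Theorem~\ref{the4.3} to $o$-rates will be extracted from the fact that $\lim_{t\to+\infty}\mathcal{E}_b(t)$ exists, together with the integrability estimates~(\ref{weakz}).

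For the weak convergence I must check the two hypotheses of Lemma~\ref{L3}. Hypothesis~(ii) is immediate: Theorem~\ref{the4.3} gives $g(x(t))-g(x^*)=\mathcal{O}(1/(\delta(t)t^{2q}))$, and since $\delta(t)t^{2q}\to+\infty$ we get $g(x(t))\to\min g$; as $g$ is convex and continuous, hence weakly lower semicontinuous, every weak sequential cluster point $\bar{x}$ of the bounded trajectory satisfies $g(\bar{x})\le\liminf_{t}g(x(t))=\min g$, so $\bar{x}\in\textup{argmin}\,g$. Hypothesis~(i), the existence of $\lim_{t\to+\infty}\lVert x(t)-z\rVert$ for every $z\in\textup{argmin}\,g$, is the hard part. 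I would exploit the Hessian-driven reformulation of~(\ref{DS}): writing $w(t):=\dot{x}(t)+\beta\nabla g(x(t))$ and using $\beta\nabla^2 g(x)\dot{x}=\beta\frac{d}{dt}\nabla g(x)$, the system reads $\dot{w}(t)+\frac{\alpha}{t^q}\dot{x}(t)+\big(\delta(t)-\frac{\alpha\beta}{t^q}\big)\nabla g(x(t))+\frac{a}{t^p}x(t)=0$. Setting $h_z(t):=\tfrac12\lVert x(t)-z\rVert^2$, I would compute $\ddot{h}_z+\frac{\alpha}{t^q}\dot{h}_z$, substitute the system, and use the convexity inequality $\langle x-z,\nabla g(x)\rangle\ge g(x)-\min g\ge0$ together with the identity $\langle x-z,\nabla^2 g(x)\dot{x}\rangle=\frac{d}{dt}\langle x-z,\nabla g(x)\rangle-\langle\dot{x},\nabla g(x)\rangle$ to rewrite the right-hand side as a nonpositive term plus a function integrable on $[t_0,+\infty)$. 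Here the estimates~(\ref{weakz}) are decisive: since $p>q+1>1$ the weights $t^{-p}$ and $t^{q-p}$ are integrable, and $\int t^{q}\lVert\dot{x}\rVert^2\,dt$, $\int\delta t^{2q}\lVert\nabla g\rVert^2\,dt$, $\int t^{q-p}\lVert x-x^*\rVert^2\,dt$ are finite, which is exactly what places the driving term in $L^1$. Lemma~\ref{L1} then yields $\lim_{t\to+\infty} h_z(t)\in\mathbb{R}$, i.e. $\lim_{t\to+\infty}\lVert x(t)-z\rVert$ exists. I expect this differential-inequality bookkeeping to be the main obstacle, because the damping $1/t^q$ with $q<1$ is too weak to control $\int\lVert\dot x\rVert\,dt$ directly, so the conclusion must be wrung out of the quadratic $L^1$-estimates rather than from a naive bounded-variation argument.

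For the $o$-rates I would combine $\lim_{t\to+\infty}\mathcal{E}_b(t)=:\ell\in\mathbb{R}$ (Theorem~\ref{the4.3}) with the limit $\lim_{t\to+\infty}\lVert x(t)-x^*\rVert^2$ just obtained. In the expression~(\ref{2q}) the Tikhonov term $\tfrac{a}{2}t^{2q-p}\lVert x\rVert^2$ vanishes (because $2q-p<q-1<0$ and $x$ is bounded), the $qt^{q-1}$-corrections vanish, and $\tfrac{b(\alpha-b-qt^{q-1})}{2}\lVert x-x^*\rVert^2$ converges; expanding $\tfrac12\lVert b(x-x^*)+t^q w\rVert^2$ and discarding the convergent term $\tfrac{b^2}{2}\lVert x-x^*\rVert^2$, one is left with the convergence of $\eta(t)+\tfrac12\nu(t)+b\,t^q\langle x-x^*,w\rangle$, where $\eta:=\delta t^{2q}(g(x)-g(x^*))\ge0$ and $\nu:=t^{2q}\lVert w\rVert^2\ge0$ (replacing the full coefficient of $g(x)-g(x^*)$ by $\delta t^{2q}$ only changes things by $\beta(\alpha-b)t^q(g-g(x^*))\to0$). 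The cross term tends to $0$: its gradient part obeys $t^q\langle x-x^*,\nabla g\rangle\le M t^q\lVert\nabla g\rVert$ with $\lVert\nabla g(x)\rVert^2\le 2L_g(g(x)-\min g)=\mathcal{O}(1/(\delta t^{2q}))$ by the $L_g$-Lipschitz continuity of $\nabla g$, so $t^q\lVert\nabla g\rVert=\mathcal{O}(1/\sqrt{\delta})\to0$, while its velocity part $t^q\langle x-x^*,\dot{x}\rangle$ is controlled by the same $L^1$ bookkeeping used for hypothesis~(i). Consequently $\lim_{t\to+\infty}(\eta+\tfrac12\nu)$ exists; and since $\int\delta t^{q}(g-g(x^*))\,dt<+\infty$ and $\int t^{q}\lVert w\rVert^2\,dt<+\infty$ (the latter because $\lVert w\rVert^2\le 2\lVert\dot x\rVert^2+2\beta^2\lVert\nabla g\rVert^2$ and $\delta t^q\ge1$ eventually), we have $\int(\eta+\tfrac12\nu)/t^q\,dt<+\infty$. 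As $\int_{t_0}^{+\infty}t^{-q}\,dt=+\infty$ for $q<1$, a positive limit of $\eta+\tfrac12\nu$ is impossible, whence $\eta+\tfrac12\nu\to0$; both summands being nonnegative, each tends to $0$. This is precisely $g(x(t))-g(x^*)=o(1/(\delta(t)t^{2q}))$ and $\lVert\dot{x}(t)+\beta\nabla g(x(t))\rVert=o(1/t^q)$.
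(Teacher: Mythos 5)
Your overall architecture (Opial's lemma for weak convergence; existence of $\lim_{t\to+\infty}\lVert x(t)-z\rVert$ via an anchor function; then ``energy has a limit $+$ integral estimates $+$ $\frac{1}{t^q}\notin L^1$'' to force the $o$-rates) is the same as the paper's, and your final step for the $o$-rates is correct. But there are two genuine gaps. The first is in your differential-inequality argument for $\lim_{t\to+\infty}\lVert x(t)-z\rVert$. After substituting the system into $\ddot h_z+\frac{\alpha}{t^q}\dot h_z$ and using the identity $\langle x-z,\nabla^2g(x)\dot x\rangle=\frac{d}{dt}\langle x-z,\nabla g(x)\rangle-\langle\dot x,\nabla g(x)\rangle$, the right-hand side contains $-\beta\frac{d}{dt}\langle x(t)-z,\nabla g(x(t))\rangle$, which is neither nonpositive nor in $L^1[t_0,+\infty)$: it cannot be classified as you claim, so the ``nonpositive $+$ integrable'' decomposition fails as stated. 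The repair is to absorb this total derivative into the anchor, i.e.\ to work with $Q(t)=\frac12\lVert x(t)-z\rVert^2+\beta\int_{t_0}^t\langle\nabla g(x(s)),x(s)-z\rangle\,ds$ — precisely the function the paper uses in $(\ref{qtt})$. One then gets a clean inequality for $Q$, but a further step is needed to return from $\lim Q$ to $\lim\lVert x-z\rVert$: one must know $\int_{t_0}^{+\infty}\langle\nabla g(x(t)),x(t)-z\rangle\,dt<+\infty$, which the paper proves separately (estimate $(\ref{28q})$, obtained by integrating the energy inequality and using monotonicity of $\nabla g$); alternatively it follows from $\lim Q\in\mathbb{R}$ together with the nonnegativity of both summands of $Q$. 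Your proposal contains neither step. A secondary point: Lemma \ref{L1} applies to first-order inequalities $\dot F\le G$, not to $\ddot h+\frac{\alpha}{t^q}\dot h\le G$; reducing the second-order inequality requires an integrating-factor argument (in the spirit of Lemma \ref{L2}), and that reduction needs the weighted bound $t^qG\in L^1$, not merely $G\in L^1$ (the bounds in $(\ref{weakz})$ do supply this, but the distinction must be made).

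The second gap is in how you isolate $\eta+\frac12\nu$ from the energy: you need the cross term $b\,t^q\langle x(t)-x^*,\dot x(t)+\beta\nabla g(x(t))\rangle$ to converge. For its gradient part you invoke $\lVert\nabla g(x)\rVert^2\le 2L_g(g(x)-\min g)$, i.e.\ global Lipschitz continuity of $\nabla g$ — an assumption that appears nowhere in Theorems \ref{the4.3}--\ref{the4.4}. For its velocity part, ``the same $L^1$ bookkeeping'' does not work: $\int_{t_0}^{+\infty}t^q\lVert\dot x(t)\rVert^2dt<+\infty$ does not imply $t^q\lVert\dot x(t)\rVert\to0$ pointwise, so $t^q\langle x-x^*,\dot x\rangle\to0$ is unsubstantiated. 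The paper avoids both issues with a different device: it compares two energies $\mathcal{E}_{b_1}$ and $\mathcal{E}_{b_2}$ with $b_1\ne b_2$; since each converges by Theorem \ref{the4.3}, so does their difference, which equals $(b_1-b_2)\bigl(-\beta t^q(g(x(t))-g^*)+k(t)\bigr)$ with $k(t)=t^q\langle\dot x+\beta\nabla g(x),x-x^*\rangle+\frac{\alpha-qt^{q-1}}{2}\lVert x-x^*\rVert^2$; since $t^q(g(x(t))-g^*)\to0$, this yields $\lim_{t\to+\infty}k(t)\in\mathbb{R}$ with no Lipschitz hypothesis and no pointwise decay of $\lVert\dot x\rVert$. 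Once $\lim k(t)$ and $\lim\mathcal{E}_b(t)$ exist, your concluding argument goes through verbatim.
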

\begin{proof}
We will apply   Lemma $\ref{L3}$  to establish that $x(t)$ converges weakly to an element of $\textup{argmin}g$. To do this,  we first show that
$
\underset{\,\,t\rightarrow +\infty}{\lim}\lVert x( t ) -x^* \rVert \in \mathbb{R}$ for all $  x^*\in \textup{argmin}g.
$
Indeed, by $(\ref{uv})$, the equality $(\ref{6q})$ can be written as
\begin{eqnarray*}
\begin{split}
\dot{\mathcal{E}}_b( t ) &\le \left( \dot{\delta}( t ) t^{2q}+\left( 2\delta ( t ) t^q-\beta b-2( 2q-1 ) \beta t^{q-1}+\alpha \beta \right) qt^{q-1} \right) ( g( x( t ) ) -g( x^* ) )\\
&\quad+\left( b-\alpha +qt^{q-1} \right) t^q\lVert \dot{x}( t ) \rVert ^2+\frac{1}{2}\left( a( 2q-p ) t^{2q-p-1} + a\beta t^{2q-2p}  \right) \lVert x( t ) \rVert ^2\\
&\quad+\left( \beta ^2qt^{2q-1}-\beta \delta ( t) t^{2q}+ \frac{1}{2}a\beta t^{2q}  \right) \lVert \nabla g( x( t) ) \rVert ^2+\frac{1}{2}bq( 1-q ) t^{q-2}\lVert x( t ) -x^* \rVert ^2\\
&\quad\quad\quad-bt^q\left< at^{-p}x( t ) +\left( \delta ( t ) -q\beta t^{-1} \right) \nabla g( x( t ) ) ,x( t ) -x^* \right>, ~~\forall t\ge t_1.
\end{split}
\end{eqnarray*}
Note that there exists $t_1'>t_1$ such that $ b-\alpha +qt^{q-1} \leq 0$ and $\beta ^2qt^{2q-1}-\beta \delta ( t) t^{2q}+\frac{1}{2}a\beta t^{2q}\leq0$  for all $t\ge t_1'$. Therefore, for all $t\ge t_1'$,
\begin{eqnarray*}
\begin{split}
\dot{\mathcal{E}}_b( t )\le& \left( \dot{\delta}( t ) t^{2q}+\left( 2\delta ( t ) t^q-\beta b-2( 2q-1 ) \beta t^{q-1}+\alpha \beta \right) qt^{q-1} \right) ( g( x( t ) ) -g( x^* ) )\\
&+\frac{1}{2}\left(  a( 2q-p ) t^{2q-p-1}+a\beta t^{2q-2p} \right) \lVert x( t ) \rVert ^2+\frac{1}{2}bq( 1-q ) t^{q-2} \lVert x( t ) -x^* \rVert ^2\\
&-bt^q\left< at^{-p}x( t ) +\left( \delta ( t ) -q\beta t^{-1} \right) \nabla g( x( t ) ) ,x( t ) -x^* \right>.
\end{split}
\end{eqnarray*}
Let $T=\max\{t_2, t_1'\}$.
By integrating it from $T$ to $t$, we obtain
\begin{eqnarray}\label{27q}
\begin{split}
&\int_{T}^t{bs^q\left< ( \delta ( s ) -q\beta s^{-1} ) \nabla g( x( s ) ) ,x( s ) -x^* \right> ds}\\
\le &\mathcal{E}_b( T ) -\mathcal{E}_b( t )+\frac{1}{2}\int_{T}^t{\left(a( 2q-p ) s^{2q-p-1}+a\beta s^{2q-2p}\right)\| x( s ) \|^2ds}\\
& +\int_{T}^t{\left( \dot{\delta}( s ) s^{2q}+\left( 2\delta ( s ) s^q-\beta b-2( 2q-1 ) \beta s^{q-1}+\alpha \beta \right) qs^{q-1} \right) ( g( x( s ) ) -g( x^* ) ) ds}\\
&+\frac{1}{2}\int_{T}^t{bq( 1-q ) s^{q-2}}\lVert x( s ) -x^* \rVert ^2ds-\int_{T}^t{bs^q\left< as^{-p}x( s ) ,x( s) -x^* \right> ds}.
\end{split}
\end{eqnarray}
By virtue of  $\delta ( s ) -q\beta s^{-1} \geq0$ and the monotonicity of $\nabla g$, we have
\begin{eqnarray}\label{27q01}
\langle( \delta ( s ) -q\beta s^{-1} )\nabla g( x( s ) ) ,x( s ) -x^* \rangle \ge 0,\forall s\geq T.
\end{eqnarray}
 Note that
$
abs^q\left|\left< s^{-p}x( s ) ,x( s ) -x^* \right>\right| \le abs^{q-p}( \lVert x( s ) \rVert ^2+\lVert x(s ) -x^* \rVert ^2 ).
$
Then, together  with $(\ref{27q})$, $(\ref{27q01})$,   $\int_{t_0}^{+\infty} \delta (t)t^{q}(g( x( t ) ) -g( x^* ) )dt <+\infty$, $\int_{t_0}^{+\infty}\ t^{q-p}\lVert x( t) \rVert ^2dt <+\infty$ and $\int_{t_0}^{+\infty} t^{q-p}\lVert x( t ) -x^* \rVert ^2dt <+\infty$, we have
\begin{eqnarray}\label{28q}
\int_{t_0}^{+\infty}t^{q-1}\left< \nabla g( x( t ) ) ,x( t ) -x^* \right> dt<+\infty \mbox{ and } \int_{t_0}^{+\infty}\left< \nabla g( x( t ) ) ,x( t ) -x^* \right> dt<+\infty.
\end{eqnarray}
Let $0<b_1<\alpha$, $0<b_2<\alpha$ and $b_1\ne b_2$. Then, for all $t\ge t_0$,
\begin{eqnarray*}
\begin{split}
\mathcal{E}_{b_1}(t)-\mathcal{E}_{b_2}(t)=&( b_1-b_2 )\Big( -\beta t^q( g( x( t ) ) -g^* )+t^q\left< \dot{x}( t ) +\beta \nabla g( x( t ) ) ,x( t ) -x^* \right> \\
&\left. +\frac{1}{2}(\alpha -qt^{q-1})\lVert x( t ) -x^* \rVert ^2 \right).
\end{split}
\end{eqnarray*}
By Theorem $\ref{the4.3}$, we have
$
\underset{t\rightarrow +\infty}{\lim}\left( \mathcal{E}_{b_1}( t ) -\mathcal{E}_{b_2}( t ) \right) \in \mathbb{R}$ and $\underset{t\rightarrow +\infty}{\lim}t^q( g( x( t ) ) -g^* ) \in \mathbb{R}.$
Consequently,
$
\underset{t\rightarrow +\infty}{\lim}\left( t^q\left< \dot{x}( t ) +\beta \nabla g( x( t ) ) ,x( t) -x^* \right> +\frac{1}{2}(\alpha -qt^{q-1})\lVert x( t ) -x^* \rVert ^2 \right)\in \mathbb{R}.
$
Set
$
k( t ) :=t^q\left< \dot{x}( t ) +\beta \nabla g( x( t ) ) ,x( t ) -x^* \right> +\frac{1}{2}(\alpha -qt^{q-1})\lVert x( t ) -x^* \rVert ^2
$
and
\begin{eqnarray}\label{qtt}
Q( t ) :=\frac{1}{2}\lVert x( t ) -x^* \rVert ^2+\beta \int_{t_0}^t{\left< \nabla g( x( s ) ) ,x( s ) -x^* \right>}ds.
\end{eqnarray}
Clearly,
$
( \alpha -qt^{q-1} ) Q( t ) +t^q\dot{Q}( t ) =k( t ) +\beta( \alpha -qt^{q-1} ) \int_{t_0}^t{\left< \nabla g( x( s )) ,x( s ) -x^* \right>}ds.
$
Together with $(\ref{28q})$ and $\underset{t\rightarrow +\infty}{\lim}k( t )\in \mathbb{R}$, we obtain  $\underset{t\rightarrow +\infty}{\lim}( \alpha -qt^{q-1} ) Q( t ) +t^q\dot{Q}( t )\in \mathbb{R}.$ Then, it follows from  Lemma $\ref{L2}$ that $\underset{t\rightarrow +\infty}{\lim}Q( t )\in \mathbb{R}$. Combining this with   $(\ref{qtt})$ and $(\ref{28q})$, we obtain
$
\underset{t\rightarrow +\infty}{\lim}\lVert x( t ) -x^* \rVert \in \mathbb{R}.
$

Next, let $\bar{x}\in \mathcal{H}$ be a weak sequential limit point of $x( t )$. Then, there exists a sequence $( t_n ) _{n\in N}\subseteq [ t_0,+\infty )$ with $\lim _{n\rightarrow +\infty}t_n=+\infty$   such that $x( t_n )$ converges weakly to $\bar{x}$, as $n\rightarrow +\infty$. Moreover,
$
g( \bar{x} ) \le \underset{n\rightarrow +\infty}{\liminf}g( x( t_n ) )=\underset{n\rightarrow +\infty}{\lim}g( x( t_n ) )=\min g.
$
This means that $\bar{x}\in \textup{argmin}g$.
Thus,  from Lemma $\ref{L3}$,  $x(t)$ converges weakly to a minimizer of $g$, as $t\rightarrow +\infty$.

On the other hand, the  energy function $\mathcal{E}_b( t )$ introduced in $(\ref{2q})$ can be written as
\begin{eqnarray*}
\begin{split}
\mathcal{E}_b( t ) =&~\left( \delta ( t ) t^{2q}-\beta ( b+2qt^{q-1}-\alpha ) t^q \right) \left( g( x( t ) ) -g( x^* ) \right) +\frac{1}{2} a  t^{2q-p} \lVert x( t ) \rVert ^2\\
&\quad+\frac{1}{2}t^{2q}\lVert  \dot{x}( t ) +\beta \nabla g( x( t ) )  \rVert ^2+bk(t).
\end{split}
\end{eqnarray*}
Since $p>q+1>2q$ and the trajectory $x(t)$ is bounded, we have
\begin{eqnarray}\label{00tr}
\underset{t\rightarrow +\infty}{\lim} \frac{1}{2} a  t^{2q-p}\lVert x( t ) \rVert ^2=0.
\end{eqnarray}
Set $\Phi(t):=\left( \delta ( t ) t^{2q}-\beta ( b+2qt^{q-1}-\alpha ) t^q \right) \left( g( x( t ) ) -g( x^* ) \right) +\frac{1}{2}t^{2q}\lVert  \dot{x}( t ) +\beta \nabla g( x( t ) ) \rVert ^2.$
Combining   $(\ref{00tr})$ with $\underset{t\rightarrow +\infty}{\lim} \mathcal{E}_b( t ) \in \mathbb{R}$ and $\underset{t\rightarrow +\infty}{\lim}k( t )\in \mathbb{R}$, we have
$
\underset{t\rightarrow +\infty}{\lim}\Phi(t)\in \mathbb{R}.
$
Then, there exists $t_0'>0$ such that for  all  $t\geq t_0'$,
\begin{eqnarray}\label{phi}
0\leq\frac{1}{t^q}\Phi(t) \leq \delta ( t ) t^{q}\left( g( x( t ) ) -g( x^* ) \right) +\frac{1}{2}t^{q}\lVert  \dot{x}( t ) +\beta \nabla g( x( t ) ) \rVert ^2.
\end{eqnarray}
By $(\ref{weakz})$ and $\lim _{t\rightarrow +\infty}\delta ( t ) =+\infty$, we have
\begin{eqnarray}\label{phi0}
\int_{t_0}^{+\infty}   t^{q}\lVert \dot{x}( t) \rVert ^2dt <+\infty \mbox{ and }\int_{t_0}^{+\infty} t^{q}\lVert \nabla g( x( t )) \rVert ^2dt <+\infty.
\end{eqnarray}
Therefore, it follows from $(\ref{phi})$ and $(\ref{phi0})$ that
$
\int_{t_0}^{+\infty}  \frac{1}{t^q}\Phi(t)dt <+\infty.
$
Note that $\frac{1}{t^q}\not\in L^1[t_0, +\infty)$ and the limit $\lim _{t\rightarrow +\infty}\Phi ( t )$  exists. Thus, $\lim _{t\rightarrow +\infty} \Phi(t) =0,$ which implies that
\begin{eqnarray*}
\begin{split}
&\lim _{t\rightarrow +\infty}\left( \delta ( t ) t^{2q}-\beta ( b+2qt^{q-1}-\alpha ) t^q \right) \left( g( x( t ) ) -g( x^* ) \right)\\
 =&\lim _{t\rightarrow +\infty}\frac{1}{2}t^{2q}\lVert  \dot{x}( t ) +\beta \nabla g( x( t ) ) \rVert ^2=0.
\end{split}
\end{eqnarray*}
Thus,
$
g( x( t ) ) -g( x^* ) =o\left( \frac{1}{\delta ( t ) t^{2q}} \right)$   and $\lVert \dot{x}( t ) +\beta \nabla g( x( t )) \rVert =o\left( \frac{1}{t^q} \right),$   as $ t\rightarrow +\infty.$
The proof is complete.\qed
\end{proof}
\begin{remark}
 Clearly, when $\beta\equiv0$ and $\delta(t)\equiv1$ (i.e., the time scaling and Hessian-driven damping terms are removed from the system $(\ref{DS})$), we recover the results provided in \cite[Theorem 7]{qp2023}.
\end{remark}

In the sequel, we establish a simultaneous result that concerns both the convergence rate of the function value
and the strong convergence of the trajectory generated by the   system $(\ref{DS})$ towards   a minimal norm solution of the problem (\ref{g}). To do this, let $x_t$ be the unique solution of the   strongly convex optimization problem
$
\underset{x\in \mathcal{X}} \min ~g_t(x):=g( x ) +\frac{a}{2\delta ( t ) t^p}\lVert x \rVert ^2 .
$
Obviously,
 $\nabla g_t(x_t)=\nabla g(x_t)+\frac{a}{\delta(t)t^p}x_t=0.$
 Let $\bar{x}^*$ be an element of the minimal norm   solution set of the   (\ref{g}).  Thus, $\bar{x}^*=\mbox{proj}_{\textup{argmin}g}0,$ where $\mbox{proj}$ denotes the projection operator.
By the classical properties of the Tikhonov regularization given in \cite{bot21mp}, we have
 $\bar{x}^*=\lim _{t\rightarrow +\infty}x_t$ and $\lVert x_t \rVert \le \lVert \bar{x}^* \rVert $  for all $t\ge t_0.$
 Further, we can deduce from \cite[Lemma 2]{attjde22}  that
\begin{eqnarray}\label{4.3}
\left\lVert \frac{d}{dt}x_t \right\rVert \le \left( \frac{p}{t}+\frac{\dot{\delta}( t )}{\delta ( t )} \right) \lVert x_t \rVert, \mbox{ for almost  every }  t\ge t_0.
\end{eqnarray}

\begin{theorem}\label{the5.1}
Let $q<p<q+1$. Suppose that $t^p\dot{\delta }( t )+pt^{p-1}\delta(t)-a\beta\geq 0$ and  the condition $(\ref{b})$ holds.   Let  $x:[ t_0,+\infty)\rightarrow\mathcal{X}$  be the unique global solution of the system $(\ref{DS})$.
 Then,
\begin{enumerate}
\item[{\rm (i)}] $x(t)$ converges strongly to $\bar{x}^*=\mathrm{proj}_{\textup{argmin}g}0,$ that is $\lim _{t\rightarrow +\infty}\lVert x( t ) -\bar{x}^* \rVert =0$.
\item[{\rm (ii)}]If $q<p<\frac{1}{2}(3q+1)$, as  $t\rightarrow +\infty$ it holds
 \begin{eqnarray*}
\left\{\begin{split}
\|\dot{x}( t ) +\beta \nabla g( x( t ) ) \|&=\mathcal{O}\left( \frac{1}{t^{\frac{p+1-\max \{ q,p-q \}}{2}}} \right), \ \ \ \lVert x( t ) -x_t \rVert =\mathcal{O}\left( \frac{1}{t^{\frac{1-q}{2}}} \right), \\
\ g( x( t ) ) -g( \bar{x}^*)&=\mathcal{O}\left( \frac{1}{\delta ( t ) t^p} \right).
\end{split}\right.
\end{eqnarray*}
\item[{\rm (iii)}] If $\frac{1}{2}(3q+1)\leq p<q+1$,  as  $t\rightarrow +\infty$ it holds
 \begin{eqnarray*}
\begin{split}
&\lVert \dot{x}( t ) +\beta \nabla g( x( t ) ) \rVert =\mathcal{O}\left( \frac{1}{t^{2q-p+1}} \right)  \mbox{ and } \lVert x( t ) -x_t \rVert =\mathcal{O}\left( \frac{1}{t^{q-p+1}} \right).
\end{split}
\end{eqnarray*}
Further,  if $\frac{1}{2}(3q+1)\le p\le \frac{1}{3}(4q+2)
$, then $g( x( t ) ) -g( \bar{x}^*)=\mathcal{O}\left( \frac{1}{\delta \left( t \right) t^p} \right) ,$ as $ t\rightarrow +\infty,$  and for $\frac{1}{3}(4q+2)<p<q+1$, one has $g( x( t ) ) -g( \bar{x}^* )=\mathcal{O}\left( \frac{1}{\delta ( t ) t^{4p-2q+2}} \right),$  as   $t\rightarrow +\infty.$

    \end{enumerate}
\end{theorem}
\begin{proof}Let $\alpha >b>0$. We consider the   energy function
\begin{eqnarray}\label{4.8}
\begin{split}
 {E}( t ) =&\delta ( t ) t^{2q}( g_t( x( t ) ) -g_t( x_t ) ) + \frac{1}{2}b\left( \alpha -b-qt^{q-1} \right) \lVert x( t ) -x_t \rVert ^2\\
& +\frac{1}{2}\left\lVert b( x( t ) -x_t ) +t^q\Big( \dot{x}( t ) +\beta \nabla g( x( t ) )\Big) \right\rVert ^2.
\end{split}
\end{eqnarray}

Obviously, there exists $t_1\ge t_0$ such that $\alpha -b-qt^{q-1} >0$ for all $ t\ge t_1$. Hence, $E( t ) \ge 0$  for all $t\ge t_1$.
 We now analyze the time derivative of $E( t )$.
Firstly,
\begin{eqnarray*}
\begin{split}
\frac{d}{dt} g_t( x( t ) )
=&\left<\dot{x}( t ),\nabla g( x( t ) ) \right> +\frac{a}{\delta ( t ) t^p}\left< \dot{x}( t ) ,x( t) \right> -\left(\frac{pa}{2\delta ( t ) t^{p+1}}+\frac{a\dot{\delta}( t )}{2\delta^2 ( t )t^p}\right)\lVert x( t ) \rVert ^2\\
=&\left<\dot{x}( t ) ,\nabla g_t( x( t ) ) \right> -\frac{a}{2\delta ( t )t^p} \left( \frac{p}{  t}+\frac{\dot{\delta}( t )}{\delta  ( t )} \right) \lVert x( t ) \rVert ^2.
\end{split}
\end{eqnarray*}
Similarly, together with $\nabla g_t( x_t ) =0$, we have
$
\frac{d}{dt}( g_t( x_t ) ) =-\frac{a}{2\delta ( t ) t^p}\left( \frac{p}{t }+\frac{\dot{\delta}( t )}{\delta ( t )} \right) \lVert x_t \rVert ^2.
$
Then,
\begin{eqnarray}\label{4.11}
\begin{split}
&\frac{d}{dt}\Big(\delta ( t ) t^{2q}\left( g_t( x( t) ) -g_t( x_t) \right)\Big)\\
=&\left(\dot{\delta}( t ) t^{2q} +2qt^{2q-1}\delta ( t )\right)( g_t( x( t ) ) -g_t( x_t ) )+\delta ( t) t^{2q}\left< \dot{x}( t ) ,\nabla g_t( x( t ) ) \right>\\
& -\frac{1}{2}at^{2q-p}\left( \frac{p}{ t}+\frac{\dot{\delta}( t )}{\delta ( t ) } \right) \left(\lVert x( t ) \rVert ^2- \lVert x_t \rVert ^2\right) .
\end{split}
\end{eqnarray}

 Secondly, for any $\lambda >0$ and $t\geq t_1$,
\begin{eqnarray}\label{4180}
\begin{split}
&\frac{d}{dt}\left(  \frac{1}{2}b\left( \alpha -b-qt^{q-1} \right) \lVert x( t ) -x_t \rVert ^2\right)\\
=&\frac{1}{2}bq(1-q) t^{q-2} \lVert  {x}( t )-x_t\rVert ^2+b(\alpha- b -qt^{q-1} ) \left< x( t ) -x_t, \dot{x}( t ) -\frac{d}{dt}x_t \right>\\
\le&\frac{1}{2}bq(1-q) t^{q-2} \lVert  {x}( t )-x_t\rVert ^2+b(\alpha- b -qt^{q-1} ) \left< x( t ) -x_t, \dot{x}( t ) \right>\\
&+\frac{1}{2}\lambda b( \alpha -b-qt^{q-1} ) t^{p-q}\left\lVert \frac{d}{dt}x_t \right\rVert ^2+\frac{1}{2\lambda} b( \alpha -b-qt^{q-1} ) t^{q-p}\lVert x( t ) -x_t \rVert ^2.
\end{split}
\end{eqnarray}

 Thirdly, \begin{eqnarray*}
{\begin{split}
&\frac{d}{dt}\left(\frac{1}{2}\left\lVert b( x( t ) -x_t ) +t^q ( \dot{x}( t ) +\beta  \nabla g( x( t ) )  ) \right\rVert ^2\right)\\
=&\Big< b( x( t ) -x_t ) +t^q\dot{x}( t ) +\beta t^q \nabla g( x( t ) ),( b+qt^{q-1} ) \dot{x}( t )-b\frac{d}{dt}x_t \\
 &  +t^q\ddot{x}( t ) +\beta qt^{q-1} \nabla g( x( t ) ) +\beta t^q \nabla^2 g( x( t ) )\dot{x}( t )\Big>.
\end{split}}
\end{eqnarray*}
This together with $(\ref{DS})$ and  $\nabla g_t(x(t))=\nabla g(x(t))+\frac{a}{\delta(t)t^p}x(t)$ yields
\begin{eqnarray}\label{4.12}
\small {\begin{split}
&\frac{d}{dt}\left(\frac{1}{2}\left\lVert b( x( t ) -x_t ) +t^q ( \dot{x}( t ) +\beta  \nabla g( x( t ) )  ) \right\rVert ^2\right)\\
=&\left< b( x( t ) -x_t ) +t^q\dot{x}( t ) +\beta t^q\nabla g( x( t ) ) ,( b-\alpha+qt^{q-1} ) \dot{x}( t ) -b\frac{d}{dt}x_t\right.\\
&\ \ \ \ \ \ \left. -\delta ( t ) t^q\nabla g_t( x( t ) ) +\beta qt^{q-1}\nabla g( x( t ) ) \right>\\
=&b( b-\alpha +qt^{q-1} ) \left< x( t ) -x_t,\dot{x}( t ) \right> +\underset{A}{\underbrace{( b-\alpha +qt^{q-1} ) t^q\Big(\lVert \dot{x}( t ) \rVert ^2+ \left<\beta  \nabla g( x( t ) ) ,\dot{x}( t ) \right>\Big )}}\\
&\underset{B}{\underbrace{-b^2\left< x( t ) -x_t,\frac{d}{dt}x_t \right> }}\  \underset{C}{\underbrace{-bt^q\left< \dot{x}( t ) ,\frac{d}{dt}x_t \right> }}\ \underset{D}{\underbrace{-b\beta t^q\left<  \nabla g( x( t ) ),\frac{d}{dt}x_t \right> }}\\
&\underset{E}{\underbrace{-b\delta ( t ) t^q\left< x( t ) -x_t,\nabla g_t( x( t ) ) \right> }}-\delta ( t ) t^{2q}\left< \dot{x}( t ) ,\nabla g_t( x( t ) ) \right> \\
&\underset{F}{\underbrace{-\beta \delta ( t ) t^{2q}\left<  \nabla g( x( t ) ) ,\nabla g_t( x( t ) ) \right> }}+\underset{G}{\underbrace{b\beta qt^{q-1}\left< x( t ) -x_t, \nabla g( x( t ) ) \right> }}\\
&+\underset{H}{\underbrace{\beta qt^{2q-1}\left< \dot{x}( t ) , \nabla g( x( t ) ) \right> }}+\beta ^2qt^{2q-1}\lVert  \nabla g( x( t ) ) \rVert ^2.
\end{split}}
\end{eqnarray}
Note that
\begin{eqnarray*}
\left\{
\begin{split}
A&=\frac{1}{2}\left( b-\alpha +qt^{q-1} \right)t^q \Big( \lVert \dot{x}( t ) \rVert ^2+\lVert \dot{x}( t ) +\beta \nabla g( x( t ) ) \rVert ^2-\beta ^2\lVert \nabla g( x( t ) )\rVert ^2 \Big),\\
B & \le \frac{1}{2\lambda}b^2t^{q-p}\lVert x( t ) -x_t \rVert ^2+ \frac{1}{2}\lambda b^2t^{p-q} \left\lVert \frac{d}{dt}x_t \right\rVert ^2,\\
C & \le \frac{1}{2} \lambda bt^q \lVert \dot{x}( t ) \rVert ^2+\frac{1}{2\lambda}bt^q\left\lVert \frac{d}{dt}x_t \right\rVert ^2,
 D  \le\frac{1}{2}bt^q \lVert \nabla g( x( t ) ) \rVert ^2+\frac{1}{2}\beta ^2bt^q \left\lVert \frac{d}{dt}x_t \right\rVert ^2,\\
E&\le -b\delta ( t ) t^q( g_t( x( t ) ) -g_t( x_t ) ) -\frac{1}{2}abt^{q-p}\lVert x( t ) -x_t \rVert ^2,\\
F&=-\frac{1}{2}\beta \delta ( t ) t^{2q}\left( \lVert \nabla g_t( x( t ) ) \rVert ^2+\lVert \nabla g( x( t ) ) \rVert ^2-\frac{a^2}{\delta^2 ( t )t^{2p}}\lVert x( t ) \rVert ^2 \right)\\
&\leq-\frac{1}{2}\beta \delta ( t ) t^{2q}\left(\lVert \nabla g( x( t ) ) \rVert ^2-\frac{a^2}{\delta^2 ( t )t^{2p}}\lVert x( t ) \rVert ^2 \right),\\
G & \le  \frac{1}{2}bqt^{q-2} \lVert x( t ) -x_t \rVert ^2+ \frac{1}{2}\beta ^2bqt^{q} \lVert \nabla g( x( t ) ) \rVert ^2,\\
H & \le \frac{1}{2}qt^{2q-1} \lVert \dot{x}( t ) \rVert ^2+\frac{1}{2}\beta ^2qt^{2q-1} \lVert \nabla g( x( t ) ) \rVert ^2.\\
\end{split}
\right.
\end{eqnarray*}
Here  the inequality in $E$ holds due to $(\ref{L})$.
Combining $(\ref{4.8})$ with  $(\ref{4.11})$, $(\ref{4180})$ and $(\ref{4.12})$,   we obtain that for all  $t\geq t_1,$
\begin{eqnarray}\label{4.19}
\small {\begin{split}
&\dot{ {E}}( t )
\\
\le& \left( \dot{\delta}( t ) t^{2q}+2q\delta ( t )t^{2q-1}-b\delta ( t ) t^q \right) \left( g_t( x( t ) ) -g_t( x_t ) \right)\\
&+\frac{1}{2}b\left(  q( 1-q ) t^{q-2}+\frac{1}{\lambda}  ( \alpha -b-qt^{q-1} )t^{q-p}+\frac{1}{\lambda} b t^{q-p}-at^{q-p}+ qt^{q-2} \right) \lVert x( t ) -x_t \rVert ^2\\
&+\frac{1}{2}at^{2q-p} \left( \frac{\beta a }{\delta ( t )t^{p}}- \frac{p}{t}-\frac{\dot{\delta}(t)}{\delta (t) } \right) \lVert x( t ) \rVert ^2+\frac{1}{2} at^{2q-p} \left( \frac{p}{t }+\frac{\dot{\delta}( t )}{\delta( t )  } \right) \lVert x_t \rVert ^2\\
&+\frac{1}{2}\left( \lambda b( \alpha -b-qt^{q-1} ) t^{p-q}+ \lambda b^2t^{p-q} +\frac{1}{\lambda}bt^q+ \beta ^2bt^q\right) \left\lVert \frac{d}{dt}x_t \right\rVert ^2\\
&+\frac{1}{2}\Big(  ( b-\alpha +qt^{q-1} ) t^q + \lambda bt^q+ qt^{2q-1} \Big) \lVert \dot{x}( t ) \rVert ^2\\
&+ \frac{1}{2}( b-\alpha +qt^{q-1} ) t^q \lVert \dot{x}( t ) +\beta \nabla g( x( t ) ) \rVert ^2\\
&+\frac{1}{2}\Big( -\beta ^2( b-\alpha +qt^{q-1} ) t^q+bt^q-\beta \delta ( t ) t^{2q}+\beta ^2bqt^{q }+ 3\beta ^2qt^{2q-1} \Big) \lVert\nabla g( x( t ) ) \rVert ^2.
\end{split}}
\end{eqnarray}

On the other hand, by $(\ref{4.8})$ and
$
\frac{1}{2}\lVert b( x( t ) -x_t ) +t^q\left( \dot{x}( t ) +\beta \nabla g( x( t ) ) \right) \rVert ^2\le b^2\lVert x( t ) -x_t \rVert ^2+t^{2q}\lVert \dot{x}( t ) +\beta \nabla g( x( t ) ) \rVert ^2,
$
it follows that
\begin{eqnarray}\label{4.20}
\begin{split}
 {E}( t )& \le \delta ( t ) t^{2q}\left( g_t( x( t ) ) -g_t( x_t ) \right) +\left(\frac{1}{2}b( \alpha -b-qt^{q-1} )+b^2 \right ) \lVert x( t ) -x_t \rVert ^2\\
&~~~~+t^{2q}\lVert \dot{x}( t )+ \beta   \nabla g( x( t ) ) \rVert ^2.
\end{split}
\end{eqnarray}
Let $r:=\max \left\{ q,p-q \right\}$ and $K$ be an arbitrarily positive constant. By $(\ref{4.19})$  and $(\ref{4.20})$, we have for all $t\geq t_1,$
\begin{eqnarray}\label{4.22}
 {\begin{split}
&\dot{ {E}}( t )+\frac{K}{t^r} {E}( t )
\\
\le& \Big( \dot{\delta}( t ) t^{2q}+2q\delta ( t )t^{2q-1}-b\delta ( t ) t^q +K\delta ( t ) t^{2q-r}\Big) \left( g_t( x( t ) ) -g_t( x_t ) \right)\\
&+\frac{1}{2}b\Big(  q( 1-q ) t^{q-2}+\frac{1}{\lambda}  ( \alpha -b-qt^{q-1} )t^{q-p}+\frac{1}{\lambda} b t^{q-p}\\
&\quad\quad\quad -at^{q-p}+ qt^{q-2}+K ( \alpha -b-qt^{q-1} ) t^{-r}+2Kb t^{-r} \Big) \lVert x( t ) -x_t \rVert ^2\\
&+\frac{1}{2}at^{2q-p} \left( \frac{\beta a }{\delta ( t )t^{p}}- \frac{p}{t}-\frac{\dot{\delta}(t)}{\delta (t) } \right) \lVert x( t ) \rVert ^2+\frac{1}{2} at^{2q-p} \left( \frac{p}{t }+\frac{\dot{\delta}( t )}{\delta( t )  } \right) \lVert x_t \rVert ^2\\
&+\frac{1}{2}\left( \lambda b( \alpha -b-qt^{q-1} ) t^{p-q}+ \lambda b^2t^{p-q} +\frac{1}{\lambda}bt^q+ \beta ^2bt^q\right) \left\lVert \frac{d}{dt}x_t \right\rVert ^2\\
&+\frac{1}{2}\left(  ( b-\alpha +qt^{q-1} ) t^q + \lambda bt^q+ qt^{2q-1} \right) \lVert \dot{x}( t ) \rVert ^2\\
&+ \frac{1}{2}\Big(( b-\alpha +qt^{q-1} ) t^q +2Kt^{2q-r}\Big)\lVert \dot{x}( t ) +\beta \nabla g( x( t ) ) \rVert ^2\\
&+\frac{1}{2}\Big( -\beta ^2( b-\alpha +qt^{q-1} ) t^q+bt^q-\beta \delta ( t ) t^{2q}+\beta ^2bqt^{q }+ 3\beta ^2qt^{2q-1} \Big) \lVert\nabla g( x( t ) ) \rVert ^2.
\end{split}}
\end{eqnarray}
Further, together with $(\ref{4.3})$, $ t\dot{\delta}( t )<K_1\delta ( t )$ and $\lVert x_t \rVert \le \lVert \bar{x}^* \rVert$, we  deduce that there exists $t_2\ge t_1$ such that for all $t\geq t_2,$
\begin{eqnarray}\label{4.2}
 \frac{1}{2}at^{2q-p}  \left( \frac{p}{t }+\frac{\dot{\delta}( t )}{\delta ( t ) } \right) \lVert x_t \rVert ^2\le \frac{1}{2}at^{2q-p} \frac{p+K_1}{t }\lVert x_t \rVert ^2\leq  \frac{1}{2}a(p+K_1) t^{2q-p-1} \lVert \bar{x}^* \rVert ^2
\end{eqnarray}
and
\begin{eqnarray}\label{4.23}
\left\lVert \frac{d}{dt}x_t \right\rVert ^2\le \left( \frac{p}{t}+\frac{K_1}{t} \right) ^2\lVert x_t \rVert ^2\le \frac{K_2}{t^2}\lVert \bar{x}^* \rVert ^2,
\end{eqnarray}
where $K_2=(p+K_1)^2$.
Combining $(\ref{4.22})$, $(\ref{4.2})$ and $(\ref{4.23})$, we obtain that for all  $t\geq t_2,$
\begin{eqnarray}\label{4.24}
 {\begin{split}
&\dot{ {E}}( t )+\frac{K}{t^r} {E}( t )
\\
\le& \Big( \dot{\delta}( t ) t^{2q}+2q\delta ( t )t^{2q-1}-b\delta ( t ) t^q +K\delta ( t ) t^{2q-r}\Big) \left( g_t( x( t ) ) -g_t( x_t ) \right)\\
&+\frac{1}{2}b\Big(  q( 1-q ) t^{q-2}+\frac{1}{\lambda}  ( \alpha -b-qt^{q-1} )t^{q-p}+\frac{1}{\lambda} b t^{q-p}\\
&\quad\quad\quad -at^{q-p}+ qt^{q-2}+K ( \alpha -b-qt^{q-1} ) t^{-r}+2Kb t^{-r} \Big) \lVert x( t ) -x_t \rVert ^2\\
&+\frac{1}{2}at^{2q-p} \left( \frac{\beta a }{\delta ( t )t^{p}}- \frac{p}{t}-\frac{\dot{\delta}(t)}{\delta (t) } \right) \lVert x( t ) \rVert ^2\\
&+\frac{1}{2}\left(  ( b-\alpha +qt^{q-1} ) t^q + \lambda bt^q+ qt^{2q-1} \right) \lVert \dot{x}( t ) \rVert ^2\\
&+ \frac{1}{2}\Big(( b-\alpha +qt^{q-1} ) t^q +2Kt^{2q-r}\Big)\lVert \dot{x}( t ) +\beta \nabla g( x( t ) ) \rVert ^2\\
&+\frac{1}{2}\Big( -\beta ^2( b-\alpha +qt^{q-1} ) t^q+bt^q-\beta \delta ( t ) t^{2q}+\beta ^2bqt^{q }+ 3\beta ^2qt^{2q-1} \Big) \lVert\nabla g( x( t ) ) \rVert ^2\\
&+ \frac{1}{2}\Big( a(p+K_1)t^{2q-p-1} + K_2\lambda b( \alpha -b-qt^{q-1} ) t^{p-q-2 } \\
&\quad\quad\quad +K_2\lambda b^2t^{p-q-2} +\frac{1}{ \lambda}K_2bt^{q-2}+ K_2\beta ^2bt^{q-2 }\Big) \lVert \bar{x}^* \rVert ^2.
\end{split}}
\end{eqnarray}
Clearly, from $t^p\dot{\delta }( t )+pt^{p-1}\delta(t)-a\beta\geq 0$, we  deduce  that
\begin{eqnarray}\label{4.25101}
\frac{1}{2}at^{2q-p} \left( \frac{\beta a }{\delta ( t )t^{p}}- \frac{p}{t}-\frac{\dot{\delta}(t)}{\delta (t) } \right)\leq0.
\end{eqnarray}
Set $\frac{\alpha}{a}<\lambda <\frac{\alpha}{b}-1$ and
$
0<K<\min \left\{ b,\frac{\alpha -b}{2},\frac{a-\frac{1}{\lambda}\alpha}{b+\alpha} \right\}$.
Together with  $(\ref{4.25101})$,  we   deduce  from $(\ref{4.24})$ that   there exists $t_3\ge t_2$ such that for all $ t\ge t_3$,
\begin{eqnarray}\label{4.25}
\begin{split}
 \dot{E}( t ) +\frac{K}{t^r}E( t )
\le &  \frac{1}{2}\Big(  a(p+K_1)t^{2q-p-1} + K_2\lambda b( \alpha -b-qt^{q-1} ) t^{p-q-2 }  \\
&+K_2\lambda b^2t^{p-q-2} +\frac{1}{ \lambda}K_2bt^{q-2}+ K_2\beta ^2bt^{q-2 }\Big)   \lVert \bar{x}^* \rVert ^2.
\end{split}
\end{eqnarray}

Now,  we consider the following two cases:

$\mathbf{Case~ I}$:   $q<p<\frac{1}{2}(3q+1)$. In this case,    we have $ {2q-p-1}> {p-q-2}$ and $ 2q-p-1 >  q-2 $. Then, there exist $C'>0$ and $t_4\ge t_3$ such that for all $t\ge t_4$,
\begin{eqnarray*}
\begin{split}
 & \frac{1}{2}\Big(  a(p+K_1)t^{2q-p-1} + K_2\lambda b( \alpha -b-qt^{q-1} ) t^{p-q-2 }  \\
&\quad\quad\quad +K_2\lambda b^2t^{p-q-2} +\frac{1}{ \lambda}K_2bt^{q-2}+ K_2\beta ^2bt^{q-2 }\Big) \lVert \bar{x}^* \rVert ^2\le C't^{2q-p-1}.
\end{split}
\end{eqnarray*}
Hence, $(\ref{4.25})$ becomes
$
\dot{E}( t ) +\frac{K}{t^r}E(t)\le C't^{2q-p-1} $   for all $t\ge t_4.$
Using a similar argument as in \cite[Theorem 10]{qp2023},   there exists $t_5\ge t_4$ and $C_2>0$ such that
$
E( t ) \le C_2t^{2q-p-1+r}
$ for all $t \ge t_5$.
Consequently, as $ t\rightarrow +\infty , $ it holds that
$
\lVert \dot{x}( t) +\beta \nabla g( x( t  ) \rVert ^2=\mathcal{O}\left( \frac{1}{t^{p+1-r}} \right)$   and
 \begin{eqnarray}\label{4.7xxx}
 g_t( x( t ) ) -g_t( x_t ) =\mathcal{O}\left( \frac{1}{\delta ( t ) t^{p+1-r}} \right).
 \end{eqnarray}
From $g_t$ is $\frac{a}{\delta ( t ) t^p}$-strongly convex, we have
$
g_t(\bar{x}^*) -g_t( x_t ) \ge \left< \nabla g_t( x_t ) ,\bar{x}^*-x_t \right> +\frac{a}{2\delta ( t ) t^p}\lVert \bar{x}^*-x_t \rVert ^2 =\frac{a}{2\delta ( t ) t^p}\lVert \bar{x}^*-x_t \rVert ^2.
$
Thus,
\begin{eqnarray}\label{4.7}
\begin{split}
&g( x(t) ) -g(\bar{x}^* )\\
=&( g_t( x(t) ) -g_t( x_t ) ) +( g_t( x_t ) -g_t( \bar{x}^*) ) +\frac{a}{2\delta ( t ) t^p}\left( \lVert \bar{x}^* \rVert^2 -\lVert x(t) \rVert ^2\right)\\
\le& g_t( x(t) ) -g_t( x_t ) +\frac{a}{2\delta( t ) t^p}\lVert \bar{x}^* \rVert ^2.
\end{split}
\end{eqnarray}
Combining  $(\ref{4.7xxx})$,  $(\ref{4.7})$  and $r<1$, we get
$
g( x( t ) ) -g( \bar{x}^*)=\mathcal{O}\left( \frac{1}{\delta ( t ) t^p} \right), $  as $ t\rightarrow +\infty .
$
Moreover, it is easy to show that
$
\lVert x( t ) -x_t \rVert =\mathcal{O}\left( \frac{1}{t^{\frac{1-q}{2}}} \right), $ as  $t\rightarrow +\infty .
$
This  together with $\lim _{t\rightarrow +\infty}x_t=\bar{x}^*$  yields
$
\underset{t\rightarrow +\infty}{\lim}\lVert x(t)-\bar{x}^* \rVert =0.
$

$\mathbf{Case~ II}$: $ \frac{1}{2}(3q+1)\leq p< q+1$. In this case,   $p-q-2\geq 2q-p-1> q-2$. Then,   there exists $C>0$ and $t_4\ge t_3$ such that for all $t\geq t_4$,
\begin{eqnarray*}
\begin{split}
& \frac{1}{2}\Big(  a(p+K_1)t^{2q-p-1} + K_2\lambda b( \alpha -b-qt^{q-1} ) t^{p-q-2 }  \\
&\quad\quad\quad +K_2\lambda b^2t^{p-q-2} +\frac{1}{ \lambda}K_2bt^{q-2}+ K_2\beta ^2bt^{q-2 }\Big) \lVert \bar{x}^* \rVert ^2\le Ct^{p-q-2}.
\end{split}
\end{eqnarray*}
Thus, $(\ref{4.25})$ becomes
$
\dot{E}( t ) +\frac{K}{t^r}E( t ) \le Ct^{p-q-2} $  for all $ t\ge t_4.$
Moreover, since $0<q<1$ and $\frac{1}{2}(3q+1)\leq p<q+1$, we get $r=\max \left\{ q,p-q \right\}=p-q<1$.
Then, using a similar argument as in \cite[Theorem 10]{qp2023},  there exists $t_5\ge t_4$ and $C_1>0$ such that for all $t \ge t_5$,
\begin{eqnarray}\label{4.30}
E( t ) \le C_1t^{p-q-2+r} .
\end{eqnarray}
This  together with $r=p-q$  yields that
$
\lVert x( t ) -x_t \rVert =\mathcal{O}\left( \frac{1}{t^{q-p+1}} \right), $  as $t\rightarrow +\infty.
$
Combining this with $\lim _{t\rightarrow +\infty}x_t=\bar{x}^*$,  we obtain
$
\underset{t\rightarrow +\infty}{\lim}\lVert x(t)-\bar{x}^* \rVert =0.
$
Further, it follows from $(\ref{4.30})$ that
$
 \lVert \dot{x}( t ) +\beta \nabla g( x( t ) ) \rVert =\mathcal{O}\left( \frac{1}{t^{2q-p+1}} \right)
$
and
\begin{eqnarray}\label{o1}
g_t( x( t ) ) -g_t( x_t ) =\mathcal{O}\left( \frac{1}{\delta ( t ) t^{4q-2p+2}} \right), \ as\ \ t\rightarrow +\infty.
\end{eqnarray}

Now, according to $(\ref{4.7})$ and $(\ref{o1})$, if $4q-2p+2\geq p,$ that is $\frac{1}{2}(3q+1)\le p\le \frac{1}{3}(4q+2)$, we have
$
g( x( t ) ) -g( \bar{x}^*)=\mathcal{O}\left( \frac{1}{\delta ( t ) t^p} \right), $  as $t\rightarrow +\infty .
$
Conversely, if $4q-2p+2< p,$ that is $\frac{1}{3}(4q+2)<p<q+1$, we can deduce from   $(\ref{4.7})$ and $(\ref{o1})$ that
$
g( x( t ) ) -g( \bar{x}^*)=\mathcal{O}\left( \frac{1}{\delta ( t ) t^{4q-2p+2}} \right),$  as $t\rightarrow +\infty.
$
The proof is complete.\qed
\end{proof}

\begin{remark}
\begin{enumerate}
\item[{\rm (i)}]
Theorem \ref{the5.1} shows that we can simultaneously obtain a fast convergence rate for
$g(x(t))$ and the strong convergence of the trajectory $x(t)$ towards the minimum norm
solution of   problem (\ref{g}). However, in previous studies \cite{TRIGS1,zhong}, these results were achieved separately, relying on different dynamics associated with varying parameter settings.

\item[{\rm (ii)}] Clearly, when   $\beta\equiv0$ and $\delta(t)\equiv1$, that is when the time scaling and Hessian-driven damping terms are removed from   system $(\ref{DS})$, Theorem \ref{the5.1} reduces to  \cite[Theorem 10]{qp2023}.

\end{enumerate}
\end{remark}
\section{Convergence  of the associated  algorithm}
In this section, we propose an inertial proximal gradient algorithm obtained by temporal discretization of the system (\ref{DS})  that also exhibit corresponding convergence rates, which is compatible with the results in the continuous case.

  Take a fixed  time step size $h>0$, and set $t_k=kh$,  $x_{k+1}=x( kh ) $, $\delta _k=\delta ( kh )
 $.  Then, for all $k\ge 1$, the implicit time  discretization of the system (\ref{DS}) gives
\begin{eqnarray}\label{ipa110sun}
\begin{split}
&\frac{x_{k+1}-2x_k+x_{k-1}}{h^2}+\frac{\alpha}{( kh ) ^q}\frac{x_{k+1}-x_k}{h}+\frac{\beta ( \nabla g( x_{k+1} ) -\nabla g( x_k ) )}{h}\\
&+\delta _k\nabla g( x_{k+1} ) +\frac{a}{( kh ) ^p}x_{k+1}=0.
\end{split}
\end{eqnarray}
This follow that
\begin{eqnarray*}
\begin{split}
&\left( ( kh ) ^{q+p}+\alpha h( kh ) ^p+h^2a( kh ) ^q \right) ( x_{k+1}-x_k ) +( h\beta+h^2\delta _k ) ( kh ) ^{q+p}\nabla g( x_{k+1} )\\
=&( kh ) ^{q+p}\left( x_k-x_{k-1}+h\beta\nabla g( x_k ) \right) -h^2a( kh ) ^qx_k.
\end{split}
\end{eqnarray*}
Set $d _k:=\frac{( kh ) ^{q+p}}{( kh ) ^{q+p}+\alpha h( kh ) ^p+ah^2( kh ) ^q}$, $\rho _k:=d _k( h\beta+h^2\delta _k )$ and $a_k:=a( kh ) ^{-p}$. Then,
\begin{eqnarray}\label{ipa110}
( I+\rho _k\nabla g ) ( x_{k+1} ) =x_k+d _k\left( x_k-x_{k-1}+h\beta\nabla g( x_k ) \right) -h^2d _ka_kx_k.
\end{eqnarray}

Now, solving (\ref{ipa110}) with respective to $x_{k+1}$, we obtain the following inertial proximal gradient algorithm for solving the problem (\ref{g}):
\begin{algorithm}[H]
\caption{(IPGA): Inertial Proximal Gradient Algorithm.}
\textbf{Initialization:}  Let $x_0, x_1 \in \mathcal{X}$,  $\alpha ,\ a,\ \beta, h ,\ q,\ p>0$.

\begin{algorithmic}
\For{$k = 1, 2, \dots$}

    \State \textbf{Step 1:} Compute $d _k=\frac{( kh ) ^{q+p}}{( kh ) ^{q+p}+\alpha h( kh ) ^p+ah^2( kh ) ^q}$;

    \State \textbf{Step 2:} Choose $\delta _k>0$ and compute $y_k=x_k+d _k( x_k-x_{k-1}+\beta h\nabla g( x_k ) )$ and   $\rho _k=d _k( \beta h+h^2\delta _k )$;

    \State \textbf{Step 3:} Compute $a_k=a( kh ) ^{-p}$ and update the variable $x_{k+1}=\textup{prox}_{\rho _kg}( y_k-h^2d _ka_kx_k )$;

    \If{a stopping condition is satisfied}
        \State \textbf{Return} $(d _k, y_k, \rho _k, x_{k+1})$
    \EndIf
\EndFor
\end{algorithmic}
\end{algorithm}
\begin{remark}
\begin{enumerate}
\item[{\rm (i)}] In  $\mbox{(IPGA)}$, $d_k$ is the inertial parameter, $a_k$ is the Tikhonov regularization parameter and $\rho _k$ is the variable step size parameter. Further, note that   $\mbox{(IPGA)}$ involves both the gradient     and the proximal operator  relative to the function $f$. Consequently, we can extend  $\mbox{(IPGA)}$   to the case of  non-smooth convex functions which is beyond the scope  of this paper and deserves further research.
\item[{\rm (ii)}]If $p=q=0$,   $\mbox{(IPGA)}$ exhibits a structure similar to that of the inertial proximal gradient algorithm $\mbox{(IPATTH)}$ introduced in \cite{2024AC}. On the other hand, if $\beta\equiv0$, i.e., in the case without Hessian driven damping, $\mbox{(IPGA)}$ has a structure similar to that of $\mbox{(PIATR)}$ introduced in \cite{2025LC}. Further, if $a\equiv0$, i.e., in the case without Tikhonov regularization, $\mbox{(IPGA)}$ has a structure similar to that of the inertial proximal algorithms  introduced in \cite{amp222,aop23}.
\item[{\rm (iii)}]
Let $q_k:=( kh ) ^q$.  $\mbox{(IPGA)}$ can    equivalently be written as
\begin{eqnarray}\label{5.3}
\begin{split}
q_k( x_{k+1}-2x_k+x_{k-1} ) =&- \alpha h( x_{k+1}-x_k ) -h( \beta +h\delta _k )q_k\nabla g( x_{k+1} ) \\
&+h\beta q_k\nabla g( x_k ) -h^2q_ka_kx_{k+1},
\end{split}
\end{eqnarray}
 which directly follows from   (\ref{ipa110sun}).
\end{enumerate}
\end{remark}

Throughout this section, we assume that $\{\delta_k\}_{k\in\mathbb{ N}}$ is a non-decreasing sequence and $\lim _{k\rightarrow +\infty}\delta _k=+\infty$.

In the case that $q+1<p\le2$, we first give the fast convergence rates of the function values which is
compatible with the result  obtained in Theorem \ref{the4.3}.
\begin{theorem}\label{the6.1}
Let   $q+1<p\leq2$,   $a>q( 1-q ) $ for $p=2$, and $\alpha >h^{q-1}$,  Let  $\{x_k \} _{k\in \mathbb{N}}$ be  a sequence  generated by \textup{(IPGA)},
  Then, as $k\rightarrow +\infty$,    it holds that
$
g( x_k ) -g( x^* )  =\mathcal{O}\left( \frac{1}{k^{2q}\delta _k} \right)$ and $ \lVert x_k-x_{k-1}+h\beta\nabla g( x_k) \rVert =\mathcal{O}\left( \frac{1}{k^{q}} \right)$,  where  $x^*\in \textup{argmin}g$.
\end{theorem}

\begin{proof} Let $0<\lambda \leq\alpha h-h^q$. We consider the following energy sequence:
\begin{eqnarray*}
\begin{split}
\mathcal{E}_k:=&\mu_k( g( x_k ) -g( x^* ) ) +\frac{1}{2}\lVert \lambda ( x_k-x^* ) +q_k( x_k-x_{k-1}+h\beta\nabla g( x_k ) )\rVert ^2\\
&+\frac{1}{2}\lambda \ell _k\lVert x_k-x^* \rVert ^2+\frac{1}{2}h^2q_ka_k( q_{k+1}+\ell _k ) \lVert x_k \rVert ^2,
\end{split}
\end{eqnarray*}
where
$\mu_k:=h( q_{k+1}+\ell _k ) \mathcal{B}_k+h\beta \ell _kq_{k+1}  ,$
$\mathcal{B}_k:= \beta(q_k-  q_{k+1})+hq_k\delta _k   $ and
$\ell _k:=q_k -q_{k+1}+\alpha h-\lambda$.

Clearly, from $\{q_k-q_{k+1}\}_{k\in  \mathbb{N}}$ is a  non-decreasing sequence, we have $q_k-q_{k+1}\geq q_0-q_1=-h^q$, $\forall k\in  \mathbb{N} $. This together with $0<\lambda \leq\alpha h-h^q$ yields   $\ell _k\geq0$  for all $k\in  \mathbb{N} $.  Moreover, by  $\lim _{k\rightarrow +\infty}\delta _k=+\infty$,  there exists $k_0\in \mathbb{N}$ such that  $\mathcal{B}_k\geq0$ for all $k\ge k_0$. Consequently,  $\mu_k\geq0$  for all $k\ge k_0 $.  Then, $\mathcal{E}_k\ge0$  for all $k\ge k_0 $.

Now, we  prove that  $\mathcal{E}_k$ is a bounded sequence. Indeed, let
$\vartheta_k:=\lambda ( x_k-x^* ) +q_k( x_k-x_{k-1}+h\beta\nabla g( x_k ) ).$ Then,
\begin{eqnarray}\label{5.6}
\begin{split}
&\mathcal{E}_{k+1}-\mathcal{E}_k\\=&( \mu_{k+1}-\mu_k ) \left( g( x_{k+1} ) -g( x^* ) \right) +\mu_k( g( x_{k+1} ) -g( x_k ) )\\
&+\frac{1}{2}\left( \lVert \vartheta_{k+1} \rVert ^2-\lVert \vartheta_k \rVert ^2 \right) +\frac{1}{2}\lambda \left( \ell_{k+1}\lVert x_{k+1}-x^* \rVert ^2-\ell_k\lVert x_k-x^* \rVert ^2 \right)\\
&+\frac{1}{2}h^2\Big(q_{k+1}a_{k+1}\left( q_{k+2}+\ell_{k+1} \right)
 \lVert x_{k+1} \rVert ^2- q_ka_k( q_{k+1}+\ell _k )\lVert x_k \rVert ^2 \Big).\\
\end{split}
\end{eqnarray}
In the following, we   evaluate the third term in the right-hand side of (\ref{5.6}). Indeed,
\begin{eqnarray*}\label{5.7}
\begin{split}
&\vartheta_{k+1}-\vartheta_k\\=&\lambda ( x_{k+1}-x_k ) +q_{k+1}( x_{k+1}-x_k ) -q_k( x_k-x_{k-1} ) \\
&+h\beta q_{k+1}\nabla g( x_{k+1} ) -h\beta q_k\nabla g( x_k )\\
=&\lambda ( x_{k+1}-x_k ) +( q_{k+1}-q_k ) ( x_{k+1}-x_k ) +q_k( x_{k+1}-2x_k+x_{k-1} ) \\
&+h\beta q_{k+1}\nabla g( x_{k+1}) -h\beta q_k\nabla g( x_k )\\
=&( \lambda +q_{k+1}-q_k-\alpha h ) ( x_{k+1}-x_k ) +h(\beta q_{k+1}- q_k( \beta +h\delta _k ) ) \nabla g( x_{k+1} )
\\&-h^2q_ka_kx_{k+1}\\
=&-\ell _k( x_{k+1}-x_k ) -h\mathcal{B}_k\nabla g( x_{k+1} ) -h^2q_ka_kx_{k+1}.
\end{split}
\end{eqnarray*}
where the third equality holds due to (\ref{5.3}). Thus,
\begin{eqnarray*}\label{5.9}
{\begin{split}
&\frac{1}{2}\lVert \vartheta_{k+1} \rVert ^2-\frac{1}{2}\lVert \vartheta_k \rVert ^2\\
=&\left< \vartheta_{k+1}-\vartheta_k,\vartheta_{k+1} \right> -\frac{1}{2}\lVert \vartheta_{k+1}-\vartheta_k \rVert ^2\\
=&\left< -\ell_k( x_{k+1}-x_k ) -h\mathcal{B}_k\nabla g( x_{k+1} ) -h^2q_ka_kx_{k+1},\lambda ( x_{k+1}-x^* ) +q_{k+1}( x_{k+1}\right.\\
 &\left.-x_k+h\beta\nabla g( x_{k+1}) ) \right>-\frac{1}{2}\lVert  \ell_k( x_{k+1}-x_k ) +h\mathcal{B}_k\nabla g( x_{k+1} ) +h^2q_ka_kx_{k+1} \rVert ^2\\
=&\underset{\overline{A}}{\underbrace{-\lambda \ell _k\left< x_{k+1}-x_k,x_{k+1}-x^* \right>}} -h\lambda \mathcal{B}_k\left<\nabla g( x_{k+1},x_{k+1}-x^* ) \right>\\
&\underset{\overline{B}}{\underbrace{-\lambda h^2q_ka_k\left<x_{k+1}, x_{k+1}-x^*\right>}}-\ell_kq_{k+1}\lVert x_{k+1}-x_k \rVert ^2\\
&-hq_{k+1}\mathcal{{B}}_k\left<\nabla g( x_{k+1}) , x_{k+1}-x_k \right>\underset{\overline{C}}{\underbrace{-h^2q_ka_kq_{k+1}\left<x_{k+1}, x_{k+1}-x_k\right>}}\\
&-h\beta \ell_kq_{k+1}\left< x_{k+1}-x_k,\nabla g( x_{k+1} ) \right> -h^2\beta \mathcal{B}_kq_{k+1}\lVert \nabla g( x_{k+1} ) \rVert ^2\\
&\underset{\overline{D}}{\underbrace{-h^3\beta q_ka_kq_{k+1}\left< x_{k+1},\nabla g( x_{k+1} ) \right>}}-\frac{1}{2}\ell _k^2\lVert x_{k+1}-x_k \rVert ^2\\
&-h\ell _k\mathcal{B}_k\left< x_{k+1}-x_k,\nabla g( x_{k+1} )   \right>\underset{\overline{E}}{\underbrace{-h^2q_ka_k\ell _k\left< x_{k+1}-x_k,x_{k+1} \right>}}\\
&-\frac{1}{2}\lVert h\mathcal{B}_k\nabla g( x_{k+1} ) +h^2q_ka_kx_{k+1} \rVert ^2.
\end{split}}
\end{eqnarray*}
Note that
\begin{eqnarray*}\label{5.10}
~~~~\left\{
\begin{split}
\overline{A} =&\frac{1}{2}\lambda \ell_k\left(\lVert x_k-x^* \rVert ^2- \lVert x_{k+1}-x^* \rVert ^2-\lVert x_{k+1}-x_k \rVert ^2 \right),&\\
\overline{B}=&\frac{1}{2}\lambda h^2q_ka_k\left( \lVert x^* \rVert ^2-\lVert x_{k+1}\rVert ^2-\lVert x_{k+1}-x^* \rVert ^2 \right),\\
\overline{C}+\overline{E}=&\frac{1}{2}h^2q_ka_k( \ell_k+q_{k+1} ) \left( \lVert x_k \rVert ^2-\lVert x_{k+1} \rVert ^2 -\lVert x_{k+1}-x_k \rVert ^2\right),\\
\overline{D}\le& \frac{1}{2} h^4q^2_ka_k^2 \lVert x_{k+1} \rVert ^2+\frac{1}{2} {h}^2{\beta}^2{ q^2_{k+1}}\lVert \nabla g( x_{k+1} ) \rVert ^2.
\end{split}
\right.
\end{eqnarray*}
Then, together with $\mu_k=h( q_{k+1}+\ell _k ) \mathcal{B}_k+h\beta \ell_kq_{k+1} $, we obtain
\begin{eqnarray*}\label{5.11}
\begin{split}
&\frac{1}{2}\lVert \vartheta_{k+1} \rVert ^2-\frac{1}{2}\lVert \vartheta_k \rVert ^2\\
\le &-\frac{1}{2}\left( \lambda \ell_k+2\ell_kq_{k+1}+h^2q_ka_k( \ell_k+q_{k+1} )+\ell_k^2\right ) \lVert x_{k+1}-x_k \rVert ^2\\
&-h\lambda \mathcal{B}_k\left< \nabla g( x_{k+1} ) ,x_{k+1}-x^* \right> -\mu_k\left< x_{k+1}-x_k,\nabla g( x_{k+1} ) \right>\\
&-\frac{1}{2}h^2\beta q_{k+1}\left( 2\mathcal{B}_k- {\beta}  {q _{k+1}} \right) \lVert \nabla g( x_{k+1} ) \rVert ^2+\frac{1}{2}\lambda \ell_k  \lVert x_k-x^* \rVert ^2\\
&-\frac{1}{2}\lambda (h^2q_ka_k+\ell _k)\lVert x_{k+1}-x^* \rVert ^2+\frac{1}{2}h^2q_ka_k( \ell _k +q_{k+1}) \lVert x_k \rVert ^2\\
&-\frac{1}{2}h^2q_ka_k( \lambda +\ell_k+q_{k+1}-h^2q_ka_k ) \lVert x_{k+1} \rVert ^2+\frac{1}{2}\lambda h^2q_ka_k\lVert x^* \rVert ^2.
\end{split}
\end{eqnarray*}
This together with (\ref{5.6}) yields  for any $k\geq k_0,$
\begin{eqnarray}\label{5.12}
\small{\begin{split}
&\mathcal{E}_{k+1}-\mathcal{E}_k\\
\le &( \mu_{k+1}-\mu_k ) ( g( x_{k+1} ) -g( x^* ) ) +\mu_k( g( x_{k+1} ) -g( x_k ) )\\
&-\frac{1}{2}\left(\lambda \ell_k+2\ell_kq_{k+1}+\ell_k^2+h^2q_ka_k( \ell_k+q_{k+1} ) \right) \lVert x_{k+1}-x_k \rVert ^2\\
&-h\lambda \mathcal{B}_k\left< \nabla g( x_{k+1} ) ,x_{k+1}-x^* \right>-\mu_k\left< x_{k+1}-x_k,\nabla g( x_{k+1} ) \right>\\
&-\frac{1}{2}h^2\beta q_{k+1}\left( 2\mathcal{B}_k- {\beta} {q _{k+1}} \right) \lVert \nabla g( x_{k+1} ) \rVert ^2-\frac{1}{2}\lambda ( h^2q_ka_k+\ell_k-\ell _{k+1} ) \lVert x_{k+1}-x^* \rVert ^2\\
&-\frac{1}{2}h^2\Big(q_ka_k( \lambda +\ell_k+q_{k+1} - h^2q_ka_k) - q_{k+1}a_{k+1}( q_{k+2}+\ell_{k+1} ) \Big) \lVert x_{k+1} \rVert ^2\\
&+\frac{1}{2}\lambda h^2q_ka_k\lVert x^* \rVert ^2\\
\le&( \mu_{k+1}-\mu_k-h\lambda \mathcal{B}_k ) ( g( x_{k+1} ) -g( x^* ) ) \\
&-\underset{ m_k}{\underbrace{\frac{1}{2}h^2 \beta q_{k+1}\left(2 \mathcal{B}_k- {\beta}  {q _{k+1}} \right)}} \lVert \nabla g( x_{k+1} ) \rVert ^2-\underset{n_k}{\underbrace{\frac{1}{2}\lambda (h^2q_ka_k+\ell _k-\ell_{k+1} )}} \lVert x_{k+1}-x^* \rVert ^2\\
&-\underset{\zeta_k}{\underbrace{\frac{1}{2}h^2\Big(q_ka_k( \lambda +\ell_k+q_{k+1}-h^2q_ka_k ) - q_{k+1}a_{k+1}(  q_{k+2}+\ell_{k+1} ) \Big)}} \lVert x_{k+1} \rVert ^2\\
&+\frac{1}{2}\lambda h^2q_ka_k\lVert x^* \rVert ^2,
\end{split}}
\end{eqnarray}
where the second inequality holds due to $\mu_k\geq 0,$ $\mathcal{B}_k\geq0,$ $\left< \nabla g( x_{k+1} ) ,x_{k+1}-x_k \right> \ge g( x_{k+1} ) -g( x_k )$    and  $\left< \nabla g( x_{k+1} ) ,x_{k+1}-x^* \right> \ge g( x_{k+1} ) -g( x^* ) $.

 Now, according to Appendix A(i), there exists $k_1\ge k_0$ such that  $m_k\ge 0$, $n_k\ge0$ and $\zeta_k\ge0$ for all  $k\ge k_1$. Further, from $\mathcal{B}_k=\mathcal{O}\left( k^{q}\delta_k\right)$ and $\mu_k=\mathcal{O}\left( k^{2q}\delta_k\right)$  (see Appendix A(ii)),  we  deduce that  there exists $k_2\ge k_1$ such that  $ \mu_{k+1}-\mu_k-h\lambda \mathcal{B}_k\leq0$ for all $  k\ge k_2$.
  Then, it follows from $(\ref{5.12})$   that
$
\mathcal{E}_{k+1}-\mathcal{E}_k\le \frac{1}{2}\lambda h^2q_ka_k\lVert x^* \rVert ^2=\frac{1}{2}a\lambda h^{q-p+2}\lVert x^* \rVert ^2k^{q-p}$ for all $k\ge k_2.$
Summing  it from $k= k_2$ to $n-1\ge k_2$, we obtain
$
\mathcal{E}_{n}\le  E_{k_2}+\sum^{n-1}_{k= k_2}\frac{1}{2}a\lambda h^{q-p+2}\lVert x^* \rVert ^2{k^{q-p}} $   for all $k\ge k_2.$
From $q+1<p$, we have $\sum^{n-1}_{k= k_2}\frac{1}{2}a\lambda h^{q-p+2}\lVert x^* \rVert ^2{k^{q-p}}<+\infty.$ Then, there exists $C_0>0$ such that   $\mathcal{E}_{n}\le C_0 $  for all $k\ge k_2.$
This follows that
$
 \mu_n( g( x_n ) -g( x^* )) \le C_0,$
 $\frac{1}{2}\lVert \vartheta_n \rVert ^2\le C_0$ and $
 \frac{1}{2}\lambda\ell_n\lVert x_n-x^* \rVert \leq C_0.$
Moreover, according to Appendix A (ii), we have $\mu_n=\mathcal{O}\left( n^{2q}\delta_n\right)$ and $\ell_n=\mathcal{O} (n^0  )$, as $n\rightarrow+\infty$.
Then, we can deduce that
$
g( x_n ) -g( x^* )  =\mathcal{O}\left( \frac{1}{n^{2q}\delta _n} \right)$ and $ \lVert x_n-x_{n-1}+ h\beta\nabla g( x_n) \rVert =\mathcal{O}\left( \frac{1}{n^{q}} \right) $, as $ n\rightarrow +\infty.
$
The proof is complete.\qed
\end{proof}

In the sequel, we  simultaneously prove the strong convergence of the sequence generated by
 (IPGA) and  the convergence rates of the function values and discrete velocities to zero. To do this, let $\bar{x}_k$ be the unique solution of the  strongly convex optimization problem
$
\underset{x\in \mathcal{X}} \min ~g_k(x):=g( x ) +\frac{a_k}{2\delta_k  }\lVert x \rVert ^2.
$
Then,
 $\nabla g_k(\bar{x}_k)=\nabla g(\bar{x}_k)+\frac{a_k}{2\delta_k}\bar{x}_k=0$  and $ \lim _{k\rightarrow +\infty}\bar{x}_k=\bar{x}^*.$
Here, $\bar{x}^*$ is   the minimal norm element of the solution set of the problem (\ref{g}).  Obviously,
  $\lVert \bar{x}_k \rVert \le \lVert \bar{x}^* \rVert. $
Further, from \cite[Lemma A.1 (c)]{2025LC}, we can deduce that  for almost  every  $  k\ge 1$,
\begin{eqnarray}\label{axii}
\lVert \bar{x}_{k+1}-\bar{x}_k \rVert\le  \min\left\{\left(\frac{\delta _{k+1}a_k}{\delta _k a_{k+1}}-1 \right)
\lVert \bar{x}_k \rVert,\left(1-\frac{\delta _{k}a_{k+1}}{\delta _{k+1} a_{k}} \right)
\lVert \bar{x}_{k+1} \rVert\right\}.
\end{eqnarray}

In the following, to derive the strong convergence, we assume that  the following growth condition is satisfied:
\begin{eqnarray*}
& \mbox{ There exists } c>0    \mbox{ such that }  \delta _{k+1}\leq\frac{k^p}{(1+c)k^p-c(k+1)^p}\delta _k.  ~~~~~~~~   (\mathcal{G})
 \end{eqnarray*}
 \begin{remark}
Clearly,  the condition $(\mathcal{G}) $  can be rewritten as
$
\delta _{k+1} -\delta _k <\frac{c((k+1)^p-k^p)}{(1+c)k^p-c(k+1)^p}\delta_k.
$
By virtue of the gradient inequality, we can easily deduce that  the condition $(\mathcal{G}) $ is  a discretized form of $t\dot{\delta}_t\leq cp \delta(t)$, which is consistent with the condition $(\ref{b})$.
 \end{remark}

 Now,  we   give the following result  which is compatible with   Theorem $\ref{the5.1}$.
\begin{theorem}
Let $0<q<1$ and $1<p<q+1$.  Suppose that the condition $(\mathcal{G}) $ holds and there exists  $c_0>0$  such that $\delta^2_k\geq c_0 k^{p-q}$.
   Then,  for any sequence $( x_k ) _{k\in \mathbb{N}}$  generated by $\textup{(IPGA)}$, we have $\lim _{k\rightarrow +\infty}x_k=\bar{x}^*$ and the following conclusions hold:
   \begin{enumerate}
\item[{\rm (i)}] If $ 1<p<2q$,    as $ k\rightarrow +\infty$ it holds
\begin{eqnarray*}
\left\{\begin{split}
g( x_k ) -g( \bar{x}^* ) &=\mathcal{O}\left( \frac{1}{k^p\delta _k} \right),~~~~~~~\lVert x_k-\bar{x}_k \rVert =\mathcal{O}\left( \frac{1}{k^{\frac{1-q}{2}} } \right),\\
 \lVert\nabla g( x_k) \rVert &=\mathcal{O}\left( \frac{1}{k^{\frac{p-q+1}{2}}\delta _k} \right),\ \rVert x_k-x_{k-1}\rVert=\mathcal{O}\left( \frac{1}{k^{\frac{p-q+1}{2}}} \right).
\end{split}\right.
\end{eqnarray*}
\item[{\rm (ii)}]If $  2q\le p<\frac{1}{2}(3q+1)$, as $ k\rightarrow +\infty$ it holds
\begin{eqnarray*}
\left\{\begin{split}
g( x_k ) -g( \bar{x}^* ) &=\mathcal{O}\left( \frac{1}{k^p\delta _k} \right),~~~~\lVert x_k-\bar{x}_k \rVert =\mathcal{O}\left( \frac{1}{k^{\frac{1-q}{2}}} \right),\\
\lVert\nabla g( x_k) \rVert & =\mathcal{O}\left( \frac{1}{k^{\frac{q+1}{2}}\delta _k} \right),\ \rVert x_k-x_{k-1}\rVert=\mathcal{O}\left( \frac{1}{k^{\frac{q+1}{2}}} \right).
\end{split}\right.
\end{eqnarray*}
\item[{\rm (iii)}]If $  \frac{1}{2}(3q+1)\le p<q+1$, as $ k\rightarrow +\infty$ it holds
\begin{eqnarray*}
\left\{\begin{split}
\lVert x_k-\bar{x}_k \rVert& =\mathcal{O}\left( \frac{1}{k^{q-p+1}} \right),~~\lVert\nabla g( x_k) \rVert =\mathcal{O}\left( \frac{1}{k^{2q-p+1}\delta _k} \right),\\
\rVert x_k-x_{k-1}\rVert&=\mathcal{O}\left( \frac{1}{k^{2q-p+1 }} \right).
\end{split}\right.
\end{eqnarray*}
Further, if $\frac{1}{2}(3q+1)\leq p<\frac{1}{3}(4q+2)$, then $g( x_k ) -g( \bar{x}^* )= \mathcal{O}\left( \frac{1}{k^p\delta _k} \right) ,$ as  $ k\rightarrow +\infty,$ and
for $\frac{1}{3}(4q+2)\le p<q+1$, one has $g( x_k ) -g( \bar{x}^* )= \mathcal{O}\left( \frac{1}{k^{4q-2p+2}\delta _k} \right) ,$ as $k\rightarrow +\infty.$

 \end{enumerate}

\end{theorem}
\begin{proof}
Let   $0<\lambda<\frac{1}{2}\alpha h$ and $k\ge 2$. We define the energy sequence
\begin{eqnarray}\label{ener11}
\begin{split}
E_k=&b _k( g_k( x_k ) -g_k( \bar{x}_k ) ) +\frac{1}{2}\lVert \lambda ( x_k-\bar{x}_k ) +q_k( x_k-x_{k-1}+h\beta \nabla g( x_k ) ) \rVert ^2\\
&+\frac{1}{2}\lambda\gamma _k \lVert x_k-\bar{x}_k \rVert ^2+\frac{1}{2}\xi _{k-1}\lVert \nabla g( x_k ) \rVert ^2+\frac{1}{2}\eta _{k-1}\lVert x_k-x_{k-1} \rVert ^2+ \frac{1}{2}\sigma _k\lVert x_k \rVert ^2,
\end{split}
\end{eqnarray}
where
$b_k:= ( q_{k-1}+\alpha h ) B_{k-1}-h\beta q_k^{2},$ $B_k:=h( \beta +h\delta _k )q_k,$
$\gamma _k:=  q_{k-1}- q_k+\alpha h-\lambda + \frac{1}{2} h^2q_{k-1}a_{k-1} ,$
$\xi _{k-1}:=  \frac{1}{2}B_{k-1}^{2}-h^2\beta ^2   q_{k}^{2} - \frac{2\lambda\beta ^2   q_{k}^{2}}{q_{k-1}a_{k-1}} ,$
$\eta _{k-1}:=  ( q_{k-1}+\alpha h  ) ^2-q_{k}^{2}-\lambda q_{k}-3\lambda( q_{k-1}+\alpha h+ h^2q_{k-1}a_{k-1} ) +( q_{k-1}+\alpha h ) h^2q_{k-1}a_{k-1}  $ and
$\sigma _k:=(q_{k-1}+\alpha h-h^2q_{k-1}a_{k-1} )h^2q_{k-1}a_{k-1}-b_k \frac{a_k}{\delta _k}.$
Obviously, according to Appendix B(i), there exists $k_0\in \mathbb{N}$ such that $b_k\geq0$,  $\gamma _k\geq0$, $\xi _{k-1}\geq0$, $\eta _{k-1}\geq0$  and $\sigma _k\geq0$  for all $k\ge k_0$. Thus, $E_k\ge 0$   for all  $k\ge k_0$.

Let $v_k:=\lambda ( x_k-\bar{x}_k ) +q_k( x_k-x_{k-1}+h\beta \nabla g( x_k ) ).$
 Clearly,
\begin{eqnarray}\label{5.16}
\begin{split}
& E_{k+1}-E_k\\
=& b _{k+1} ( g_{k+1}( x_{k+1} ) -g_{k+1}( \bar{x}_{k+1} ) ) -b_k( g_k( x_k ) -g_k( \bar{x}_k ) ) \\
& +\frac{1}{2}\left( \lVert v_{k+1} \rVert ^2-\lVert v_k \rVert ^2 \right)+ \frac{1}{2}\lambda\gamma _{k+1}\lVert x_{k+1}-\bar{x}_{k+1} \rVert ^2-\frac{1}{2}\lambda\gamma _k \lVert x_k-\bar{x}_k \rVert ^2\\
 &+\frac{1}{2}\xi _{k}\lVert \nabla g( x_{k+1}) \rVert ^2-\frac{1}{2}\xi _{k-1}\lVert \nabla g( x_k ) \rVert ^2\\
&+\frac{1}{2}\eta _k\lVert x_{k+1}-x_k \rVert ^2-\frac{1}{2}\eta _{k-1}\lVert x_k-x_{k-1} \rVert ^2+\frac{1}{2}\sigma _{k+1}\lVert x_{k+1} \rVert ^2-\frac{1}{2}\sigma _k\lVert x_k \rVert ^2.
\end{split}
\end{eqnarray}
Now, we  evaluate the third term on the right-hand side of (\ref{5.16}). Firstly,
\begin{eqnarray*}
\begin{split}
&\frac{1}{2}\lVert v_k \rVert ^2\\
=&\frac{1}{2}\lVert \lambda ( x_k-\bar{x}_k ) +q_k\left( x_k-x_{k-1}+h\beta\nabla g( x_k ) \right) \rVert ^2\\
=&\frac{1}{2}\lVert \lambda ( x_k-\bar{x}_k ) +\left( q_k+\alpha h \right) ( x_{k+1}-x_k ) +B_k\nabla g\left( x_{k+1} \right) +h^2q_ka_kx_{k+1} \rVert ^2\\
=&\frac{1}{2}\lambda ^2\lVert x_k-\bar{x}_k \rVert ^2+\frac{1}{2}( q_k+\alpha h )^2 \lVert x_{k+1}-x_k \rVert ^2+\frac{1}{2} {B}_{k}^{2}\lVert \nabla g( x_{k+1} ) \rVert^2+\frac{1}{2}h^4q^2_ka^2_k\lVert x_{k+1} \rVert ^2\\
&+\underset{A}{\underbrace{\lambda ( q_k+\alpha h ) \left< x_k-\bar{x}_k,x_{k+1}-x_k \right> }}+\underset{B}{\underbrace{\lambda B_k\left< x_k-\bar{x}_k,\nabla g( x_{k+1} ) \right> }}\\
&+\underset{C}{\underbrace{\lambda h^2q_ka_k\left< x_k-\bar{x}_k,x_{k+1} \right> }}+( q_k+\alpha h ) B_k\left< x_{k+1}-x_k,\nabla g( x_{k+1} ) \right> \\ &+\underset{D}{\underbrace{( q_k+\alpha h ) h^2q_ka_k\left< x_{k+1}-x_k,x_{k+1} \right> }}
+h^2B_kq_ka_k\left< \nabla g( x_{k+1}) ,x_{k+1} \right>,
\end{split}
\end{eqnarray*}
where the second equality holds due to $B_k=h( \beta +h\delta _k )q_k$ and (\ref{5.3}).
Secondly,
\begin{eqnarray*}
\begin{split}
&\frac{1}{2}\lVert v_{k+1} \rVert ^2\\
=&\frac{1}{2}\lVert \lambda ( x_{k+1}-\bar{x}_{k+1} ) +q_{k+1}\left( x_{k+1}-x_k+h\beta\nabla g( x_{k+1} ) \right) \rVert ^2\\
=&\frac{1}{2}\lambda ^2\lVert x_{k+1}-\bar{x}_{k+1} \rVert ^2+\frac{1}{2}q_{k+1}^{2}\lVert x_{k+1}-x_k \rVert ^2+\frac{1}{2}h^2\beta ^2q_{k+1}^{2}\lVert \nabla g( x_{k+1} ) \rVert ^2\\
&+\underset{E}{\underbrace{\lambda q_{k+1}\left< x_{k+1}-\bar{x}_{k+1},x_{k+1}-x_k \right> }}+\underset{F}{\underbrace{\lambda h\beta q_{k+1}\left< x_{k+1}-\bar{x}_{k+1},\nabla g( x_{k+1} ) \right> }}\\
&+ h\beta q_{k+1}^{2}\left< x_{k+1}-x_k,\nabla g( x_{k+1} ) \right> .\\
\end{split}
\end{eqnarray*}

Finally, note that
\begin{eqnarray*}
\left\{\begin{split}               	A=&\frac{1}{2}\lambda\left( q_k+\alpha h \right) \left( \lVert x_{k+1}-\bar{x}_k \rVert ^2-\lVert x_k-\bar{x}_k \rVert ^2-\lVert x_{k+1}-x_k \rVert ^2 \right),\\
	B=&\lambda B_k \Big(\langle x_{k+1}-\bar{x}_k,\nabla g( x_{k+1} ) \rangle - \langle x_{k+1}-x_k,\nabla g ( x_{k+1}  )  \rangle\Big),\\
C=&\lambda h^2q_ka_k\Big(\left< x_k-\bar{x}_k,x_k \right> +\left< x_k-\bar{x}_k,x_{k+1}-x_k \right>\Big)\\
	=&\frac{1}{2}\lambda h^2q_ka_k \Big( \lVert x_k-\bar{x}_k \rVert ^2+\lVert x_k \rVert ^2-\lVert \bar{x}_k \rVert ^2\\
	&~~~~~~~~~~~~~+ \lVert x_{k+1}-\bar{x}_k \rVert ^2-\lVert x_k-\bar{x}_k \rVert ^2-\lVert x_{k+1}-x_k \rVert ^2 \Big)\\
	=&\frac{1}{2}\lambda h^2q_ka_k\Big(\lVert x_k \rVert ^2 -\lVert \bar{x}_k \rVert ^2+ \lVert x_{k+1}-\bar{x}_k \rVert ^2- \lVert x_{k+1}-x_k \rVert ^2\Big),\\
	D=&\frac{1}{2}\left( q_k+\alpha h \right) h^2q_ka_k\left( \lVert x_{k+1}-x_k \rVert ^2+\lVert x_{k+1} \rVert ^2-\lVert x_k \rVert ^2 \right),\\
	E=&\frac{1}{2}\lambda q_{k+1}\left( -\lVert x_k-\bar{x}_{k+1} \rVert ^2+\lVert x_{k+1}-\bar{x}_{k+1} \rVert ^2+\lVert x_{k+1}-x_k \rVert ^2 \right),\\
    F\le& \frac{1}{4}\lambda h^2q_ka_k \lVert x_{k+1}-\bar{x}_{k+1} \rVert ^2+\frac{\beta ^2\lambda q_{k+1}^{2}}{q_ka_k} \lVert \nabla g( x_{k+1} ) \rVert ^2.
\end{split}\right.
\end{eqnarray*}
Thus,
\begin{eqnarray*}
\small{\begin{split}
&\frac{1}{2}\lVert v_{k+1} \rVert ^2-\frac{1}{2}\lVert v_k \rVert ^2\\
=&\left( \lambda B_k+h\beta q_{k+1}^{2}-( q_k+\alpha h ) B_k\right) \left<\nabla g ( x_{k+1}  ), x_{k+1}-x_k  \right> -\lambda B_k\left<\nabla g ( x_{k+1}  ), x_{k+1}-\bar{x}_k  \right>\\
&+\frac{1}{2}\left( h^2\beta ^2q_{k+1}^{2}- {B}_{k}^{2}+  \frac{2\beta ^2\lambda q_{k+1}^{2}}{q_ka_k}  \right) \lVert \nabla g\left( x_{k+1} \right) \rVert ^2\\
&+\frac{1}{2}\lambda\left( \lambda  + q_{k+1}+  \frac{1}{2}h^2q_ka_k \right) \lVert x_{k+1}-\bar{x}_{k+1} \rVert ^2+\frac{1}{2}\lambda( q_k+\alpha h -\lambda  ) \lVert x_k-\bar{x}_k \rVert ^2\\
&+\frac{1}{2}\Big( q_{k+1}^{2}+\lambda q_{k+1}-( q_k+\alpha h ) ^2+\lambda( q_k+\alpha h + h^2q_ka_k)-( q_k+\alpha h ) h^2q_ka_k \Big) \lVert x_{k+1}-x_k \rVert ^2\\
&-\frac{1}{2}  \lambda( q_k+\alpha h +  h^2q_ka_k ) \lVert x_{k+1}-\bar{x}_k \rVert ^2-\frac{1}{2}\lambda q_{k+1}\lVert x_k-\bar{x}_{k+1} \rVert ^2\\
&-\frac{1}{2}h^2q_ka_k\left(   q_k+\alpha h  +h^2q_ka_k \right) \lVert x_{k+1} \rVert ^2+\frac{1}{2}h^2q_ka_k\left(   q_k+\alpha h  -\lambda \right) \lVert x_k \rVert ^2\\
&+\frac{1}{2}\lambda h^2q_ka_k\lVert \bar{x}_k \rVert ^2-h^2B_kq_ka_k\left< \nabla g( x_{k+1} ) ,x_{k+1} \right>.
\end{split}}
\end{eqnarray*}
Note that there   exists $k_1\geq k_0$ such that $\lambda B_k+\beta hq_{k+1}^{2}-( q_k+\alpha h ) B_k\leq0$  for all $k\geq k_1$.
Together with $b _{k+1} = ( q_{k}+\alpha h ) B_{k}-h\beta q_{k+1}^{2} $, $\left< \nabla g( x_{k+1} ) ,x_{k+1}-x_k \right> \ge g( x_{k+1} ) -g( x_k )$ and $\left< \nabla g( x_{k+1} ) ,x_{k+1}-\bar{x}_k \right> \ge g( x_{k+1} ) -g( \bar{x}_k )$,   we obtain that for any $k\geq k_1,$
\begin{eqnarray}\label{1053x}
\small{\begin{split}
&\frac{1}{2}\lVert v_{k+1} \rVert ^2-\frac{1}{2}\lVert v_k \rVert ^2\\
\le&( b _{k+1} -\lambda B_k ) ( g( x_k ) -g( x_{k+1} ) ) +\lambda B_k( g( \bar{x}_k ) -g( x_{k+1} ) )\\
&+\frac{1}{2}\left(h^2 \beta ^2q_{k+1}^{2}-B_{k}^{2}+  \frac{2\beta ^2\lambda q_{k+1}^{2}}{q_ka_k}  \right) \lVert \nabla g( x_{k+1} ) \rVert ^2\\
&+\frac{1}{2}\lambda\left( \lambda  + q_{k+1}+  \frac{1}{2}h^2q_ka_k \right) \lVert x_{k+1}-\bar{x}_{k+1} \rVert ^2+\frac{1}{2}\lambda( q_k+\alpha h -\lambda  ) \lVert x_k-\bar{x}_k \rVert ^2\\
&+\frac{1}{2}\Big( q_{k+1}^{2}+\lambda q_{k+1}-( q_k+\alpha h ) ^2+\lambda( q_k+\alpha h+ h^2q_ka_k)\\
&~~~~~~~~~~~~~~~~~~~~~~~~~~-( q_k+\alpha h ) h^2q_ka_k \Big) \lVert x_{k+1}-x_k \rVert ^2\\
&-\frac{1}{2}  \lambda( q_k+\alpha h  +  h^2q_ka_k ) \lVert x_{k+1}-\bar{x}_k \rVert ^2-\frac{1}{2}\lambda q_{k+1}\lVert x_k-\bar{x}_{k+1} \rVert ^2\\
&-\frac{1}{2} h^2q_ka_k\left(   q_k+\alpha h  +h^2q _ka _k \right) \lVert x_{k+1} \rVert ^2+\frac{1}{2}h^2q_ka_k\left(  q_k+\alpha h  -\lambda \right) \lVert x_k \rVert ^2\\
&+\frac{1}{2}\lambda h^2q_ka_k\lVert \bar{x}_k \rVert ^2-h^2B_kq_ka_k\left< \nabla g( x_{k+1} ) ,x_{k+1} \right>.
\end{split}}
\end{eqnarray}
Further, note that
\begin{eqnarray*}
\begin{split}
&( b _{k+1}-\lambda B_k ) ( g( x_k ) -g( x_{k+1} ) )+\lambda B_k ( g( \bar{x}_k ) -g( {x}_{k+1} ) )\\
= &-b _{k+1}( g_{k+1}( x_{k+1} ) -g_{k+1}( \bar{x}_{k+1} ) )+(b _{k+1}-\lambda B_k ) ( g_k( x_k ) -g_k( \bar{x}_k ))\\
&-b _{k+1}( g_{k+1}( \bar{x}_{k+1} ) -g_k( \bar{x}_k ) )+b _{k+1}\frac{a_{k+1}}{2\delta _{k+1}} \lVert x_{k+1} \rVert ^2  \\
&-(b _{k+1}-\lambda B_k ) \frac{a_k}{2\delta _k} \lVert x_k \rVert ^2 -\lambda B_k  \frac{a_k}{2\delta _k}\lVert \bar{x}_k \rVert ^2
\end{split}
\end{eqnarray*}
and
\begin{eqnarray*}
\begin{split}
&-b _{k+1} ( g_{k+1}( \bar{x}_{k+1} ) -g_k( \bar{x}_k ) )\\
&=b _{k+1}\left( g_k( \bar{x}_k ) -g_k( \bar{x}_{k+1} ) +\left( \frac{a_k}{2\delta _k}-\frac{a_{k+1}}{2\delta _{k+1}} \right) \lVert \bar{x}_{k+1} \rVert ^2 \right)\\
&\le b _{k+1}\left( -\frac{a_k}{2\delta _k}\lVert \bar{x}_{k+1}-\bar{x}_k \rVert ^2+\left( \frac{a_k}{2\delta _k}-\frac{a_{k+1}}{2\delta _{k+1}} \right) \lVert \bar{x}_{k+1} \rVert ^2 \right),\\
\end{split}
\end{eqnarray*}
 where the inequality holds due to the $\frac{a_k}{\delta _k }$-strongly convexity of $g_k$. Thus, it follow from  (\ref{1053x}) that
\begin{eqnarray}\label{5.241}
\small{\begin{split}
&\frac{1}{2}\lVert v_{k+1} \rVert ^2-\frac{1}{2}\lVert v_k \rVert ^2\\
\le&-b _{k+1}( g_{k+1}( x_{k+1} ) -g_{k+1}( \bar{x}_{k+1} ) ) +( b _{k+1}-\lambda B_k ) ( g_k( x_k ) -g_k( \bar{x}_k ))\\
&+\frac{1}{2}\left(h^2\beta ^2q_{k+1}^{2}-B_{k}^{2}+ \frac{2\lambda \beta ^2q_{k+1}^{2}}{q_ka_k}  \right) \lVert \nabla g( x_{k+1} ) \rVert ^2\\
&+\frac{1}{2}\lambda\left( \lambda  + q_{k+1}+  \frac{1}{2}h^2q_ka_k \right) \lVert x_{k+1}-\bar{x}_{k+1} \rVert ^2+\frac{1}{2} \lambda\left( q_k+\alpha h  -\lambda  \right) \lVert x_k-\bar{x}_k \rVert ^2\\
&+\frac{1}{2}\Big( q_{k+1}^{2}+\lambda q_{k+1}-( q_k+\alpha h ) ^2+\lambda( q_k+\alpha h+ h^2q_ka_k) \\
&~~~~~~~~~~~~~~~~~~~~~~~~~~-( q_k+\alpha h ) h^2q_ka_k \Big) \lVert x_{k+1}-x_k \rVert ^2\\
&-\frac{1}{2}  \lambda( q_k+\alpha h  + h^2q_ka_k ) \lVert x_{k+1}-\bar{x}_k \rVert ^2-\frac{1}{2}\lambda q_{k+1}\lVert x_k-\bar{x}_{k+1} \rVert ^2\\
&+\frac{1}{2}\left(b _{k+1} \frac{a_{k+1}}{\delta _{k+1}} -\left( q_k+\alpha h+h^2q_ka_k \right) h^2q_ka_k \right) \lVert x_{k+1} \rVert ^2\\
&+\frac{1}{2}\left( ( q_k+\alpha h-\lambda ) h^2q_ka_k -( b _{k+1}-\lambda B_k ) \frac{a_k}{\delta _k}\right) \lVert x_k \rVert ^2\\
&+\frac{1}{2}b _{k+1}\left( \frac{a_k}{ \delta _k}-\frac{a_{k+1}}{ \delta _{k+1}} \right)  \lVert \bar{x}_{k+1} \rVert ^2+\frac{1}{2}\lambda\left( h^2q_ka_k- B_k\frac{a_k}{\delta _k} \right)  \lVert \bar{x}_k \rVert ^2\\
&-\frac{1}{2}b _{k+1}\frac{a_k}{ \delta _k}\lVert \bar{x}_{k+1}-\bar{x}_k \rVert ^2-h^2B_kq_ka_k\left< \nabla g( x_{k+1} ) ,x_{k+1} \right>.\\
\end{split}}
\end{eqnarray}

Now, we estimate  the  terms $-\frac{1}{2}  \lambda(q_k+\alpha h +  h^2q_ka_k ) \lVert x_{k+1}-\bar{x}_k \rVert ^2$, $-\frac{1}{2}\lambda q_{k+1}\lVert \bar{x}_{k+1}-x_k \rVert ^2$ and $-h^2B_kq_ka_k\left< \nabla g( x_{k+1} ),x_{k+1} \right>$ in (\ref{5.241}).
Clearly,
\begin{eqnarray*}
\small{\begin{split}
&-\frac{1}{2}  \lVert x_{k+1}-\bar{x}_k \rVert ^2\\
=&-\frac{1}{2}  \lVert x_{k+1}-\bar{x}_{k+1} \rVert ^2-\frac{1}{2}  \lVert \bar{x}_{k+1}-\bar{x}_k \rVert ^2-  \left< x_{k+1}-x_k,\bar{x}_{k+1}-\bar{x}_k \right>-  \left<x_k- \bar{x}_{k+1},\bar{x}_{k+1}-\bar{x}_k \right>\\
=&-\frac{1}{2}  \lVert x_{k+1}-\bar{x}_{k+1} \rVert ^2-\frac{1}{2}  \lVert \bar{x}_{k+1}-\bar{x}_k \rVert ^2-  \left< x_{k+1}-x_k,\bar{x}_{k+1}-\bar{x}_k \right>\\
&-  \left< x_k-\bar{x}_k,\bar{x}_{k+1}-\bar{x}_k \right>+  \lVert \bar{x}_{k+1}-\bar{x}_k \rVert ^2\\
\le& -\frac{1}{2}  \lVert x_{k+1}-\bar{x}_{k+1} \rVert ^2-\frac{1}{2}  \lVert \bar{x}_{k+1}-\bar{x}_k \rVert ^2+ \lVert x_{k+1}-x_k \rVert ^2+\frac{1}{4}\lVert \bar{x}_{k+1}-\bar{x}_k \rVert ^2\\
&+  \left< x_k-\bar{x}_k,\bar{x}_k-\bar{x}_{k+1} \right> +  \lVert \bar{x}_{k+1}-\bar{x}_k \rVert ^2\\
=&-\frac{1}{2}  \lVert x_{k+1}-\bar{x}_{k+1} \rVert ^2+\frac{3}{4}  \lVert \bar{x}_{k+1}-\bar{x}_k \rVert ^2+  \lVert x_{k+1}-x_k \rVert ^2+\left< x_k-\bar{x}_k,\bar{x}_k-\bar{x}_{k+1} \right>,
\end{split}}
\end{eqnarray*}
$
-\frac{1}{2}\lambda q_{k+1}\lVert \bar{x}_{k+1}-x_k \rVert ^2=-\frac{1}{2}\lambda q_{k+1}\lVert x_k-\bar{x}_k \rVert ^2-\frac{1}{2}\lambda q_{k+1}\lVert \bar{x}_k-\bar{x}_{k+1} \rVert ^2
-\lambda q_{k+1}\langle x_k-\bar{x}_k,\bar{x}_k-\bar{x}_{k+1} \rangle
$
and $
-h^2B_kq_ka_k\langle \nabla g( x_{k+1} ),x_{k+1} \rangle\le \frac{1}{4}B_{k}^{2}\lVert \nabla g( x_{k+1} ) \rVert ^2+h^4q_{k}^2a_{k}^2\lVert x_{k+1} \rVert ^2.
$
Note that there exists $k_2\geq k_1$ such that   $q_k+\alpha h +  h^2q_ka_k  -  q_{k+1}\geq0$ for all $ t\geq t_2$. Then, for all $t\geq t_2,$
$
 \lambda( q_k+\alpha h +  h^2q_ka_k  -  q_{k+1})\langle x_k-\bar{x}_k,\bar{x}_k-\bar{x}_{k+1} \rangle
\le \lambda( q_k+\alpha h  + h^2q_ka_k - q_{k+1}) (s_k\lVert x_k-\bar{x}_k \rVert ^2+ \frac{1}{s_k}\lVert\bar{x}_{k+1}-\bar{x}_k \rVert ^2 ),
$
where $s_k:= s( kh ) ^{q-p}$ with $s<\frac{ha}{4\alpha}$. Therefore, for all $t\geq t_2,$
\begin{eqnarray*}
\begin{split}
&-\frac{1}{2} \lambda( q_k+\alpha h + h^2q_ka_k ) \lVert x_{k+1}-\bar{x}_k \rVert ^2-\frac{1}{2}\lambda q_{k+1}\lVert \bar{x}_{k+1}-x_k \rVert ^2\\
&-h^2B_kq_ka_k\left< \nabla g( x_{k+1} ),x_{k+1} \right>\\
\le&-\frac{1}{2} \lambda( q_k+\alpha h  +  h^2q_ka_k )\lVert x_{k+1}-\bar{x}_{k+1} \rVert ^2+ \lambda( q_k+\alpha h + h^2q_ka_k ) \lVert x_{k+1}-x_k \rVert ^2\\
&+\frac{1}{2}\lambda\Big(2( q_k+\alpha h + h^2q_ka_k -q_{k+1}) s_k-  q_{k+1} \Big) \lVert x_k-\bar{x}_k \rVert ^2\\
&+\left( \frac{3}{4} \lambda( q_k+\alpha h + h^2q_ka_k )+  \lambda( q_k+\alpha h + h^2q_ka_k -  q_{k+1})\frac{1}{s_k} -\frac{1}{2}\lambda q_{k+1} \right) \lVert \bar{x}_{k+1}-\bar{x}_k \rVert ^2\\
&+\frac{1}{4}B_{k}^{2}\lVert \nabla g( x_{k+1} ) \rVert ^2+h^4q_{k}^2a_{k}^2\lVert x_{k+1} \rVert ^2.
\end{split}
\end{eqnarray*}
 This together with  (\ref{5.241}) yields for all $t\geq t_2,$
\begin{eqnarray*}
{\begin{split}
& \frac{1}{2}\lVert v_{k+1} \rVert ^2-\frac{1}{2}\lVert v_k \rVert ^2\\
\le&-b _{k+1} ( g_{k+1}( x_{k+1} ) -g_{k+1}( \bar{x}_{k+1}))+(b _{k+1}-\lambda B_k ) ( g_k( x_k ) -g_k( \bar{x}_k ) ) \\
&-\frac{1}{2}\xi_k \lVert \nabla g( x_{k+1} ) \rVert ^2-\frac{1}{2}\lambda\gamma_{k+1}\lVert x_{k+1}-\bar{x}_{k+1} \rVert ^2\\
&+\frac{1}{2} \lambda\left(  q_k+\alpha h  -\lambda   +2 ( q_k+\alpha h+ h^2q_ka_k - q_{k+1}) s_k-  q_{k+1}\right) \lVert x_k-\bar{x}_k \rVert ^2\\
&-\frac{1}{2}\eta_k \lVert x_{k+1}-x_k \rVert ^2-\frac{1}{2}\sigma_{k+1} \lVert x_{k+1} \rVert ^2\\
&+\frac{1}{2}\left( ( q_k+\alpha h-\lambda ) h^2q_ka_k -(b _{k+1}-\lambda B_k ) \frac{a_k}{\delta _k}\right) \lVert x_k \rVert ^2\\
&+\frac{1}{2}b _{k+1}\left( \frac{a_k}{ \delta _k}-\frac{a_{k+1}}{ \delta _{k+1}} \right)  \lVert \bar{x}_{k+1} \rVert ^2+\frac{1}{2}\lambda \left( h^2q_ka_k- B_k\frac{a_k}{\delta _k} \right)  \lVert \bar{x}_k \rVert ^2\\
&+\left( \frac{3}{4}\lambda( q_k+\alpha h+ h^2q_ka_k )+ \lambda( q_k+\alpha h+ h^2q_ka_k - q_{k+1})\frac{1}{s_k}\right.\\
&~~~~~~~~~~~~~-\frac{1}{2}\lambda q_{k+1}-\left.\frac{1}{2}b _{k+1}\frac{a_k}{ \delta _k} \right) \lVert \bar{x}_{k+1}-\bar{x}_k \rVert ^2.
\end{split}}
\end{eqnarray*}
Combining this   with (\ref{5.16}), we obtain that for all $t\geq t_2,$
\begin{eqnarray}\label{5.30}
{\begin{split}
&E_{k+1}-E_k\\
\leq&- \left(b_k+ \lambda B_k-b _{k+1} \right) ( g_k( x_k ) -g_k( \bar{x}_k) )\\
&- \frac{1}{2}\lambda\Big( -2 q_k -2 ( q_k+\alpha h+ h^2q_ka_k - q_{k+1}) s_k \\
&\left.~~~~~~~~~~+ q_{k+1}+ q_{k-1}+ \frac{1}{2}h^2q_{k-1}a_{k-1}
\right)  \lVert x_k-\bar{x}_k \rVert ^2-\frac{1}{2}\xi _{k-1}\lVert \nabla g( x_k ) \rVert ^2\\
&- \frac{1}{2}\lambda\left(B_k\frac{a_k}{\delta _k}-h^2q_ka_k \right)   \lVert \bar{x}_k \rVert ^2-\frac{1}{2}\eta _{k-1}\lVert x_k-x_{k-1} \rVert ^2\\
&-  \frac{1}{2}\left( ( b _{k+1}-\lambda B_k) \frac{a_k}{\delta _k}-( q_k+\alpha h-\lambda ) h^2q_ka_k  + \sigma _k\right) \lVert x_k \rVert ^2\\
& +\frac{1}{2} b _{k+1}\left( \frac{a_k}{ \delta _k}-\frac{a_{k+1}}{ \delta _{k+1}} \right) \lVert \bar{x}_{k+1} \rVert ^2+\left( \frac{3}{4}\lambda( q_k+\alpha h+ h^2q_ka_k )\right.\\
&+\lambda( q_k+\alpha h+ h^2q_ka_k - q_{k+1})\frac{1}{s_k} -\frac{1}{2}\lambda q_{k+1}-\left.b _{k+1}\frac{a_k}{2\delta _k} \right) \lVert \bar{x}_{k+1}-\bar{x}_k \rVert ^2.
\end{split}}
\end{eqnarray}

Now, we analyze the coefficients of the fourth, seventh and eighth terms   on the right-hand side of (\ref{5.30}).
Firstly,  from  the condition $(\mathcal{G}) $ and $a_k=a(kh)^{-p}$, we have $ \frac{a_{k+1}}{\delta _k}-\frac{a_{k+1}}{\delta _{k+1}}\leq c \left(\frac{a_k}{\delta _k}-\frac{a_{k+1}}{\delta _k}\right)  $. Then,  there exists $C_0>0$ and $k_3\geq k_2$ such that for all $k\geq k_3$,
\begin{eqnarray}\label{5.32}
\begin{split}
&\frac{1}{2}b _{k+1}\left( \frac{a_k}{\delta _k}-\frac{a_{k+1}}{\delta _{k+1}} \right)  \lVert \bar{x}_{k+1} \rVert ^2\\
\le&\frac{1}{2} b _{k+1}\left( \left( \frac{a_k}{\delta _k}-\frac{a_{k+1}}{\delta _k} \right) +\left( \frac{a_{k+1}}{\delta _k}-\frac{a_{k+1}}{\delta _{k+1}} \right) \right) \lVert \bar{x}_{k+1} \rVert ^2\\
\le&\frac{1}{2}(1+c) b _{k+1}\left( \frac{a_k}{\delta _k}-\frac{a_{k+1}}{\delta _k} \right) \lVert \bar{x}_{k+1} \rVert ^2\\
\le &C_0( k+1 ) ^{2q-p-1}.
\end{split}
\end{eqnarray}
where the last inequality holds due to   $\lVert \bar{x}_{k+1} \rVert \le \lVert \bar{x}^* \rVert$ and $b _{k+1}\Big( \frac{a_k}{\delta _k}-\frac{a_{k+1}}{\delta _k} \Big)=\mathcal{O}\Big( (k+1)^{2q-p-1} \Big)$, as $k\rightarrow+\infty$.

Secondly, according to Appendix B(iii), we have
$
\lVert \bar{x}_{k+1}-\bar{x}_k \rVert \le\frac{(1+c)p }{ k-cp  }\lVert \bar{x}^* \rVert,$  for  $ k$   big   enough.
Then,  there exist $C_1>0$ and $k_4\geq k_3$ such that for all $k\geq k_4,$
\begin{eqnarray}\label{5.33}
\begin{split}
&\left( \frac{3}{4}\lambda( q_k+\alpha h+ h^2q_ka_k )+ \lambda( q_k+\alpha h+ h^2q_ka_k - q_{k+1})\frac{1}{s_k}  \right) \lVert \bar{x}_{k+1}-\bar{x}_k \rVert ^2\\
\le &C_1( k+1 ) ^{\max \{q-2, p-q-2\}}.
\end{split}
\end{eqnarray}
Here  the inequality holds due to $\frac{3}{4}\lambda( q_k+\alpha h+ h^2q_ka_k )+ \lambda( q_k+\alpha h+ h^2q_ka_k - q_{k+1})\frac{1}{s_k}=\mathcal{O}\Big( k^{\max \{q, p-q \}} \Big)$, as $k\rightarrow+\infty$.

Finally,   combining (\ref{5.32})  and (\ref{5.33}) and noting that $ \frac{1}{2}\lambda\Big(B_k\frac{a_k}{\delta _k}-h^2q_ka_k \Big)=\frac{1}{2}\lambda h\beta a_kq_k\frac{1}{\delta_k}\ge 0,$ we can derive from (\ref{5.30}) that for all $ k\ge k_4,$
\begin{eqnarray}\label{5.34}
\small{\begin{split}
&E_{k+1}-E_k\\
\leq&- \underset{\nu_k}{ \underbrace{\left(b_k+ \lambda B_k-b _{k+1} \right)}}( g_k( x_k ) -g_k( \bar{x}_k) )\\
&-\underset{\omega _k}{\underbrace{\frac{1}{2}\lambda\left( -2 q_k -2 ( q_k+\alpha h+ h^2q_ka_k - q_{k+1}) s_k+  q_{k+1}+ q_{k-1}+ \frac{1}{2}h^2q_{k-1}a_{k-1}
\right)}} \lVert x_k-\bar{x}_k \rVert ^2\\
&-\frac{1}{2}\xi _{k-1}\lVert \nabla g( x_k ) \rVert ^2 -\frac{1}{2}\eta _{k-1}\lVert x_k-x_{k-1} \rVert ^2\\
&- \underset{\tau _k}{\underbrace{\frac{1}{2}\left( ( b _{k+1}-\lambda B_k) \frac{a_k}{\delta _k}-( q_k+\alpha h-\lambda ) h^2q_ka_k  + \sigma _k\right)}}\lVert x_k \rVert ^2\\
 &+C_0( k+1 ) ^{2q-p-1}+ C_1( k+1 ) ^{\max \{q-2, p-q-2 \}}.
\end{split}}
\end{eqnarray}

On the other hand, from (\ref{ener11}) and noting that
$
\frac{1}{2}\lVert v_k \rVert ^2\le \lambda ^2\lVert x_k-\bar{x}_k \rVert ^2+2q_k^2\lVert x_k-x_{k-1} \rVert ^2+2\beta ^2h^2q_{k}^{2}\lVert \nabla g( x_k ) \rVert ^2,
$
 we can derive that
\begin{eqnarray}\label{5.3444}
\small{\begin{split}
E_k\le& b_k( g_k( x_k ) -g_k( \bar{x}_k ) )+\left(\lambda ^2+\frac{1}{2}\lambda\gamma _k\right)\lVert x_k-\bar{x}_k \rVert ^2+\left(2\beta ^2h^2q_{k}^{2}+\frac{1}{2}\xi _{k-1}\right)\lVert \nabla g( x_k ) \rVert ^2\\
&+\left(2q_k^2+\frac{1}{2}\eta _{k-1}\right)\lVert x_k-x_{k-1} \rVert ^2+\frac{1}{2}\sigma _k\lVert x_k \rVert ^2.
\end{split}}
\end{eqnarray}

Let  $r=\max\{q, p-q\}$ and $H$ be arbitrarily positive constant. Then, it follows from $(\ref{5.34})$  and $(\ref{5.3444})$ that for all $ k\ge k_4$,
\begin{eqnarray}\label{5.35xa}
\small{\begin{split}
&E_{k+1}-E_k+\frac{H}{k^{r}}E_k\\
\le&-( \nu_k-Hb_kk^{-r})( g_k( x_k )-g_k( \bar{x}_k) )-\left(\omega _k-H\left(\lambda ^2+\frac{1}{2}\lambda\gamma _k\right)k^{-r}\right)\lVert x_{k}-\bar{x}_{k} \rVert ^2\\
&-\frac{1}{2}\left(\xi _{k-1}-2H\left(2\beta ^2h^2q_{k}^{2}+\frac{1}{2}\xi _{k-1}\right)k^{-r}\right)\lVert \nabla g( x_k ) \rVert ^2\\
&-\frac{1}{2}\left(\eta _{k-1}-2H\left(2q_k^2+\frac{1}{2}\eta _{k-1}\right)k^{-r}\right)\lVert x_k-x_{k-1} \rVert ^2-\left(\tau _k-\frac{1}{2}H\sigma _kk^{-r}\right)\lVert x_k \rVert ^2\\
&+C_0( k+1 ) ^{2q-p-1}+ C_1( k+1 ) ^{r-2 }.
\end{split}}
\end{eqnarray}
 According to Appendix B (i) and (ii), for $k $ big enough, we have
 \begin{eqnarray}\label{5.3xs5}
\left\{\begin{split}
&\nu_k=\mathcal{O}\left( k^q\delta _k \right),\ \ Hb_kk^{-r}=\mathcal{O}\left( k^{2q-r}\delta _k \right),\\
&\omega _k=\mathcal{O}\left( k^{q-p} \right),\ \ H\left(\lambda ^2+\frac{1}{2}\lambda\gamma _k\right)k^{-r}=\mathcal{O}\left( k^{-r} \right),\\
&\xi _{k-1}- 2H\left(2\beta ^2h^2q_{k}^{2}+\frac{1}{2}\xi _{k-1}\right)k^{-r}=\mathcal{O}\left( k^{2q}\delta _{k}^{2} \right)\geq 0,\\
&\eta _{k-1}=\mathcal{O}\left( k^q \right),\ \ 2H\left(2q_k^2+\frac{1}{2}\eta _{k-1}\right)k^{-r}=\mathcal{O}\left( k^{2q-r} \right),\\
&\tau _k-\frac{1}{2}H\sigma _kk^{-r}  \geq 0.
\end{split}\right.
\end{eqnarray}
Now, take $0 < H < \min\left\{\lambda h^{-q},\frac{1}{2}(\alpha h-2\lambda)h^{-q},\frac{\left( \frac{1}{2}ah-2\alpha s \right) h^{q-p+1}}{\alpha h+\lambda}\right\}$.  Thus,  we   deduce from (\ref{5.35xa}) and (\ref{5.3xs5}) that  there exists  $k_5\geq k_4$   such that for all $t\geq t_5$,
\begin{eqnarray}\label{5.35}
\begin{split}
E_{k+1}-E_k+\frac{H}{k^{r}}E_k\le& C_0( k+1 ) ^{2q-p-1}+ C_1( k+1 ) ^{r-2}.
\end{split}
\end{eqnarray}
By multiplying (\ref{5.35}) with  $\pi _{k+1}=\frac{1}{\prod_{i=k_5}^{k}{\left( 1-\frac{H}{i^{r}} \right)}}$, we have
\begin{eqnarray}\label{5.36}
\begin{split}
&\pi _{k+1}E_{k+1}-\pi _kE_k\le \pi _{k+1} C_0( k+1 ) ^{2q-p-1}+ \pi_{k+1}C_1( k+1 ) ^{r-2}.
\end{split}
\end{eqnarray}
Summing  (\ref{5.36}) from $k=k_5$  to $n>k_5$ and using  \cite[Lemma A.2 (b)]{2025LC},   there exist $C_2>0$ and  $C_3>0$
such that
$
\pi _{n+1}E_{n+1}\le \pi _{n+1}C_0C_2( n+1 ) ^{ 2q-p-1+r }+\pi _{n+1}C_1C_3( n+1 ) ^{2r-2 }+\pi _{k_5}E_{k_5}.
$
This follows that
\begin{eqnarray}\label{123tian}
E_{n+1}\le C_0C_2( n+1 ) ^{2q-p-1+r}+C_1C_3( n+1 ) ^{2r-2}+\frac{\pi _{k_5}E_{k_5}}{\pi _{n+1}}.
\end{eqnarray}
Now,  we consider the following two cases:

$\mathbf{Case~ I}$: $ 1<p<2q$. In this case,   $r=\max \left\{ p-q,q \right\} =q$ and $2r-2< 2q-p-1+r=3q-p-1$. Then, we can deduced from $(\ref{123tian})$ that there exists $C_4>0$ such that $E_{n+1}\le C_4( n+1 )^{3q-p-1}$, which follows that
\begin{eqnarray*}
\left\{\begin{split}
b _n( g_n( x_n ) -g_n( \bar{x}_n ) ) &\le C_4( n+1 )^{3q-p-1},~~\frac{1}{2}  \lVert v_n\rVert ^2\le C_4( n+1 )^{3q-p-1},\\
\frac{1}{2} \xi _{n-1}\lVert \nabla g( x_n ) \rVert ^2&\le C_4( n+1 )^{3q-p-1},~~\frac{1}{2} \eta_{n-1}\lVert x_n-x_{n-1} \rVert ^2\le C_4( n+1 )^{3q-p-1}.
\end{split} \right.
\end{eqnarray*}
 According to Appendix B (i), we have $b_n=\mathcal{O}\left( n^{2q}\delta _n \right)$ and $\xi _n=\mathcal{O}\left( n^{2q}\delta _{n}^{2} \right)$, as $n\rightarrow+\infty$. Then,
$
g_n( x_n ) -g_n( \bar{x}_n )  =\mathcal{O}\left( \frac{1}{n^{p-q+1}\delta _n} \right) $ and $\lVert\nabla g( x_n) \rVert =\mathcal{O}\left( \frac{1}{n^{\frac{p-q+1}{2}}\delta _n} \right),
$ as  $n\rightarrow +\infty.$
Note that
\begin{eqnarray}\label{5.40}
\begin{split}
g( x_n ) -g( \bar{x}^* ) &=( g_n( x_n) -g_n( \bar{x}_n ) ) +( g_n( \bar{x}_n ) -g_n( \bar{x}^* ) ) -\frac{a_n}{2 \delta _n}\left( \lVert x_n  \rVert ^2-\lVert \bar{x}^* \rVert ^2 \right)\\
&\le g_n(  x_n ) -g_n( \bar{x}_n ) +\frac{a_n}{2\delta _n}\lVert \bar{x}^* \rVert ^2.
\end{split}
\end{eqnarray}
This together with $q-p-1<-p$ yields
$
g( x_n ) -g( \bar{x}^* ) =\mathcal{O}\left( \frac{1}{n^p\delta _n} \right),$   as $ n\rightarrow +\infty.
$
Further, by $g_n$ is $\frac{a_n}{2\delta_n}$-strongly convex, we   deduce from  $(\ref{L})$ that
$
g_n(x_n) - g_n(\bar{x}_n) \geq \langle \nabla g_n(\bar{x}_n), y - \bar{x}_n \rangle + \frac{a_n}{2\delta_n} \| {x}_n - \bar{x}_n\|^2
= \frac{a_n}{2\delta_n} \| {x}_n - \bar{x}_n\|^2.
$
Hence, $\lVert x_n-\bar{x}_n \rVert =\mathcal{O}\left( \frac{1}{n^{\frac{1-q}{2}} } \right)$, as $ n\rightarrow +\infty.
$  This together with $\lim _{n\rightarrow +\infty}\bar{x}_n=\bar{x}^*$ yields $\lim _{n\rightarrow +\infty}x_n=\bar{x}^*$.

 On the other hand, from  $\frac{1}{2} \eta_{n-1}\lVert x_n-x_{n-1} \rVert ^2\le C_4( n+1 )^{3q-p-1} $,
$
q_n^2\|x_n-x_{n-1}\|^2\leq\|v_n\|^2+\lambda^2\lVert x_n-\bar{x}_n \rVert^2+h^2\beta^2 q_n^2 \lVert\nabla g( x_n) \rVert^2
$
and
$\eta _n=\mathcal{O}\left( n^{q}\right)$ (see Appendix B (i)), we  get
$\rVert x_n-x_{n-1}\rVert=\mathcal{O}\left( \frac{1}{n^{\frac{p-q+1}{2}}}\right),$   as  $ n\rightarrow+\infty. $

$\mathbf{Case~ II}$: $  2q\le p<\frac{1}{2}(3q+1)$. In this case,   $r=\max \left\{ p-q,q \right\} =p-q$ and $2r-2 <2q-p-1+r=q-1$.   Then,  we   deduced from $(\ref{123tian})$ that there exists $C_4>0$ such that $E_{n+1}\le C_4( n+1 )^{q-1}$, which follows that
\begin{eqnarray*}
\left\{\begin{split}
&b _n( g_n( x_n ) -g_n( \bar{x}_n ) ) \le C_4( n+1 )^{q-1},\ \frac{1}{2}\gamma_n\lVert x_n-\bar{x}_n \rVert ^2\le C_4( n+1 )^{q-1},\\
& \frac{1}{2}\xi _{n-1}\lVert \nabla g( x_n ) \rVert ^2\le C_4( n+1 )^{q-1},\ \eta _{n-1}\lVert x_n-x_{n-1} \rVert ^2\le C_4( n+1 )^{q-1}.
\end{split}\right.
\end{eqnarray*}
Then,  as $n\rightarrow +\infty $,  it holds that
$
g_n( x_n ) -g_n( \bar{x}_n )   =\mathcal{O}\left( \frac{1}{n^{q+1}\delta _n} \right),$ $\lVert x_n-\bar{x}_n \rVert =\mathcal{O}\left( \frac{1}{n^{\frac{1-q}{2}}} \right)$, and $
\lVert\nabla g( x_n) \rVert   =\mathcal{O}\left( \frac{1}{n^{\frac{q+1}{2}}\delta _n} \right).$
Now,  from (\ref{5.40}) and $p<q+1$, we have
$
g( x_n ) -g( \bar{x}^* ) =\mathcal{O}\left( \frac{1}{n^{p}\delta _n} \right),$  as $ n\rightarrow +\infty.
$
Further, from $\lVert x_n-\bar{x}_n \rVert =\mathcal{O}\left( \frac{1}{n^{\frac{1-q}{2}}} \right)$ and $\lim _{n\rightarrow +\infty}\bar{x}_n=\bar{x}^*$, we have $\lim _{n\rightarrow +\infty}x_n=\bar{x}^*$.
On the other hand, in the same manner as Case I, we   obtain that
$  \rVert x_n-x_{n-1}\rVert=\mathcal{O}\left( \frac{1}{n^{\frac{q+1}{2}}} \right), $   as $ n\rightarrow+\infty. $

$\mathbf{Case~ III}$: $  \frac{1}{2}(3q+1)\le p<q+1$. In this case,   $r=\max \left\{ p-q,q \right\} =p-q$ and $2q-p-1+r<2r-2=2q-2p-2$. From $(\ref{123tian})$, there exists $C_4>0$ such that  $E_{n+1}\le C_4( n+1 )^{2p-2q-2}$. Then, in the same manner as Case II, as $n\rightarrow +\infty$ it holds
\begin{eqnarray*}
\left\{\begin{split}
g_n( x_n ) -g_n( \bar{x}_n ) & =\mathcal{O}\left( \frac{1}{n^{4q-2p+2}\delta _n} \right),\lVert x_n-\bar{x}_n \rVert =\mathcal{O}\left( \frac{1}{n^{q-p+1}} \right),\\
\lVert\nabla g( x_n) \rVert &=\mathcal{O}\left( \frac{1}{n^{2q-p+1}\delta _n} \right),\ \rVert x_n-x_{n-1}\rVert=\mathcal{O}\left( \frac{1}{n^{2q-p+1}} \right).
\end{split}\right.
\end{eqnarray*}
From $\lVert x_n-\bar{x}_n \rVert =\mathcal{O}\left( \frac{1}{n^{q-p+1}} \right)$ and $\lim _{n\rightarrow +\infty}\bar{x}_n=\bar{x}^*$, we have $\lim _{n\rightarrow +\infty}x_n=\bar{x}^*$.
Further, if $4q-2p+2>p$, that is $\frac{1}{2}(3q+1)\leq p<\frac{1}{3}(4q+2)
$,
 we  deduce from (\ref{5.40}) that
$
g( x_n ) -g( \bar{x}^* ) =\mathcal{O}\left( \frac{1}{n^{p}\delta _n} \right), $ as $ n\rightarrow +\infty.
$
Conversely, if $4q-2p+2\leq p$, that is $\frac{1}{3}(4q+2)\le p<q+1
$,
 we   deduce from (\ref{5.40}) that
$
g( x_n ) -g( \bar{x}^* ) =\mathcal{O}\left( \frac{1}{n^{4q-2p+2}\delta _n} \right),$ as $ n\rightarrow +\infty.
$
The proof is complete.\qed
\end{proof}
\section{Numerical experiments}
In this secction,  we give some numerical experiments to validate the obtained convergence results. All codes are run on a PC (with 1.600GHz Dual-Core Intel Core i5 and 8GB memory) using MATLAB R2020b.
\subsection{Continue case}
 In the following numerical experiments, we will compare    systems (\ref{DS}) with  (\ref{DS1}) and (\ref{DS2}).  The   systems (\ref{DS1}), (\ref{DS2}) and (\ref{DS}) are solved by ode45 adaptive method in MATLAB R2020b.

 Motivated by   \cite[Example 5.2]{A.C. Bagy} and \cite[Example 2]{amo23att},  we first give the following example  to illustrate the  convergence results.

\begin{example} Let $x:=(x_1,x_2)\in \mathbb{R}^2$.  Consider  the convex optimization problem
\begin{eqnarray*}
\min_{x\in \mathbb{R}^2}{g(x)}:=5( x_1+x_2-1 ) ^2.
\end{eqnarray*}
Clearly,  $\textup{argmin }g=\left\{ \left( x_1,1-x_1 \right) :x_1\in \mathbb{R} \right\}$ and the minimal norm solution  is $( \bar{x}_1^*,\bar{x}_2^* ) =(\frac{1}{2},\frac{1}{2} )$.

In the following  experiment, we take  the initial conditions $x (1 ) =( 1,1 )$ and $\dot{x}( 1 ) =( -1,-1 )$. The   systems (\ref{DS1}), (\ref{DS2}) and (\ref{DS}) are  solved on the time interval $[1,100]$.
We compare  system (\ref{DS})  with
  (\ref{DS1}) and (\ref{DS2})   under the following parameters setting:

$ \blacktriangleright$ Dynamical system (\ref{DS1}): ~~ $\alpha =3.5$,  $a=1$, $p=1.2$ and $q=0.9$;

$ \blacktriangleright$ Dynamical system (\ref{DS2}): ~~ $\alpha =3.5$, $\beta =4$, $a=1$  and $\delta ( t ) =t$;

$ \blacktriangleright$ Dynamical system (\ref{DS}): ~~ $\alpha =3.5$, $\beta =4$, $a=1$, $p=1.2$, $q=0.9$ and $\delta ( t ) =t$.

For any $ ( {x}_1^*,{x}_2^* ) \in \textup{argmin}g
$ and $( \bar{x}_1^*,\bar{x}_2^* ) =(\frac{1}{2},\frac{1}{2} )$, we investigate the evolution of the iteration error $\lVert ( x_1( t ) ,x_2( t ) ) -(   \bar{x}_1^*,  \bar{x}_2^* ) \rVert$ and the energy error $ g( x_1(t) ,x_2( t )) -g(  {x}_1^*, {x}_2^* ) $  of the systems (\ref{DS1}), (\ref{DS2}) and (\ref{DS}).
The results are depicted in Figure \ref{fig1}.
\begin{figure}[h]
\centering
\begin{subfigure}[t]{0.45\textwidth}
\rotatebox{90}{\scriptsize{~~~~~~~~\small{$\lVert ( x_1( t ) ,x_2( t ) ) -(  \bar{x}_1^*, \bar{x}_2^* ) \rVert$}}}
\hspace{-1mm}
\includegraphics[width=1\linewidth]{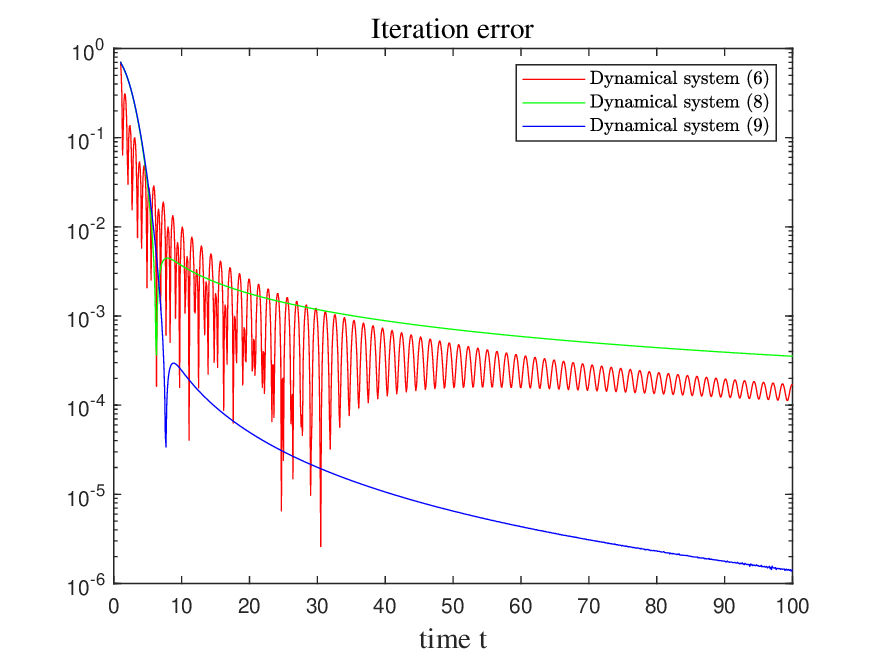}
\end{subfigure}
\hfill
\begin{subfigure}[t]{0.45\textwidth}
\rotatebox{90}{\scriptsize{~~~~~~~~~~\small{$g( x_1(t) ,x_2( t )) -g(  {x}_1^*, {x}_2^* ) $}}}
\hspace{-1mm}
\includegraphics[width=1\linewidth]{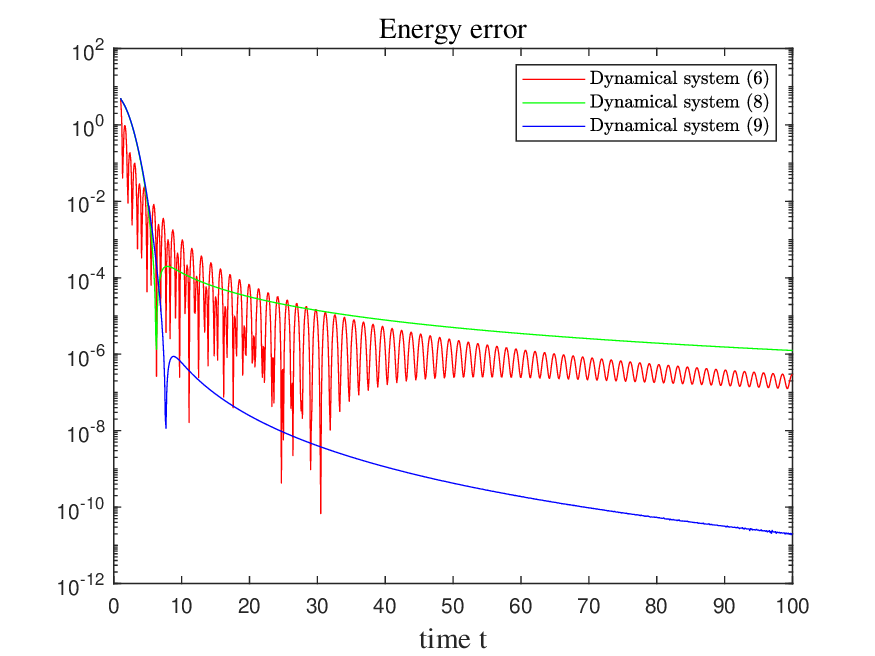}
\end{subfigure}
\centering
\caption{The behaviors of the trajectory  generated by the   systems (\ref{DS1}),  (\ref{DS2}) and (\ref{DS}).}
\label{fig1}
\end{figure}

As shown in Figure \ref{fig1}, when compared with the   systems (\ref{DS2}) and (\ref{DS}), the trajectory generated by the system (\ref{DS1}) exhibits significant oscillations. Furthermore, these curves indicate that selecting a   time  scaling parameter $\delta(t)$  can lead to improved convergence rates.  To sum up, based on these curves, the system (\ref{DS}) demonstrates better performance in terms of energy error and iteration error compared to the systems  (\ref{DS1}) and (\ref{DS2}).

 To evaluate the practical efficiency of  systems (\ref{DS1}), (\ref{DS2}) and (\ref{DS}), we record  the wall clock time (CPU time), average step size, and number of time points selected by the ode45 solver. The results are summarized in Table \ref{ta1}.
 
\begin{table}[htbp]
\centering
\caption{Stiffness-related information for Example 6.1}
\label{tab:stiffness}
\begin{tabular}{lccc}
\hline
\textbf{Dynamical System} & \textbf{Wall Clock Time (s)} & \textbf{Avg Step Size} & \textbf{Number of Time Points} \\
\hline
System (6) & 0.0083 & 5.7292e-02 & 1729 \\
System (8) & 0.0255 & 1.2544e-02 & 7893 \\
System (9) & 0.0278 & 1.3371e-02 & 7405 \\
\hline
\end{tabular}
\label{ta1}
\end{table}

 As shown in Table \ref{ta1}, although our system (\ref{DS}) achieves the best performance in energy error and iteration error (see Figure \ref{fig1}), it incurs the highest computational cost.  This can be attributed to the stiffness penalty: the introduction of Hessian-driven damping and time scaling accelerates the theoretical convergence but makes the system more stiff, forcing the adaptive solver to take smaller steps to maintain stability.

\end{example}

\begin{example}\cite[Example 1]{amo23att}  Let $x:=(x_1,x_2)\in \mathbb{R}^2$. Consider the   strictly convex optimization problem
\begin{eqnarray}\label{exam2ex}
\min_{x\in \mathbb{R}^2}{g(x):=( x_1+x_{2}^{2} ) -2\ln ( x_1+1 ) ( x_2+1 )}.
\end{eqnarray}
Obviously, $( \bar{x}_1^*,\bar{x}_2^* ) =(1,( \sqrt{5}-1 ) /2
 )$ is the    minimal norm solution of   problem (\ref{exam2ex}).

In our experiment,  we take the initial conditions $x (1 ) =( 1,1 )$ and $\dot{x}( 1 ) =( 1,1 )$. The dynamical systems (\ref{DS1}), (\ref{DS2}) and (\ref{DS}) are  solved on the time interval $[1,250]$. We compare   systems (\ref{DS})  with
 (\ref{DS1}) and (\ref{DS2})   under the following parameters setting:

$ \blacktriangleright$ Dynamical system (\ref{DS1}): ~~ $\alpha =2$,  $a=1$, $p=1.2$ and $q=0.9$;

$ \blacktriangleright$ Dynamical system (\ref{DS2}): ~~ $\alpha =2$, $\beta =4$, $a=1$  and $\delta ( t ) =t$;

$ \blacktriangleright$ Dynamical system (\ref{DS}): ~~ $\alpha =2$, $\beta =4$, $a=1$, $p=1.2$, $q=0.9$ and $\delta ( t ) =t$.

We also investigate the evolution of the iteration error $\lVert ( x_1( t ) ,x_2( t ) ) -( \bar{x}_1^*,\bar{x}_2^* ) \rVert$ and the energy error $g( x_1(t) ,x_2( t )) -g( \bar{x}_1^*,\bar{x}_2^* ) $ of the systems (\ref{DS1}), (\ref{DS2}) and (\ref{DS}).
The results  are depicted in Figure \ref{fig12}.

\begin{figure}[h]
\centering
\begin{subfigure}[t]{0.45\textwidth}
\rotatebox{90}{\scriptsize{~~~~~~~~\small{$\lVert ( x_1( t ) ,x_2( t ) ) -( \bar{x}_1^*,\bar{x}_2^* ) \rVert$}}}
\hspace{-1mm}
\includegraphics[width=1\linewidth]{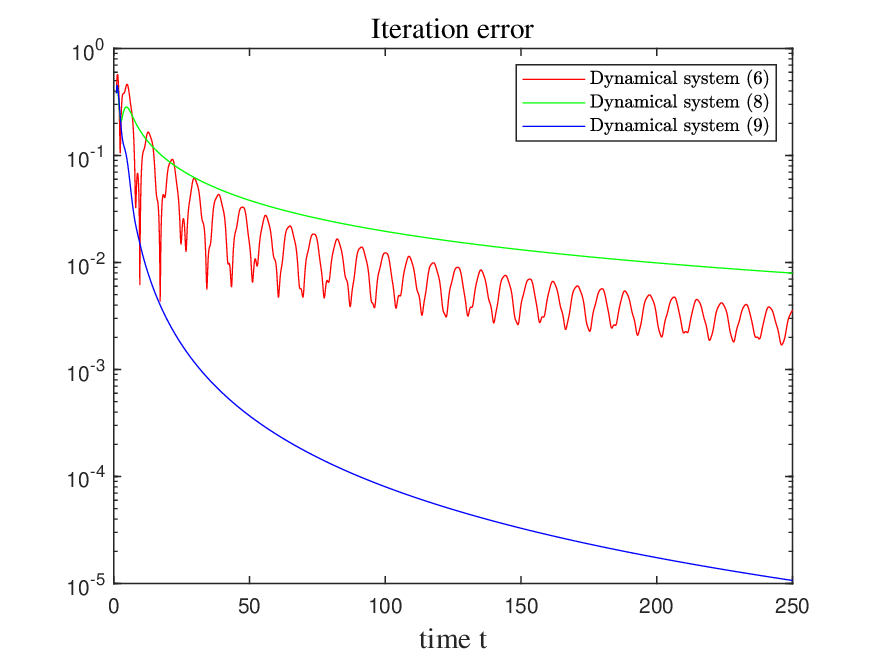}
\end{subfigure}
\hfill
\begin{subfigure}[t]{0.45\textwidth}
\rotatebox{90}{\scriptsize{~~~~~~~~~~\small{$ g( x_1(t) ,x_2( t )) -g( \bar{x}_1^*,\bar{x}_2^* ) $}}}
\hspace{-1mm}
\includegraphics[width=1\linewidth]{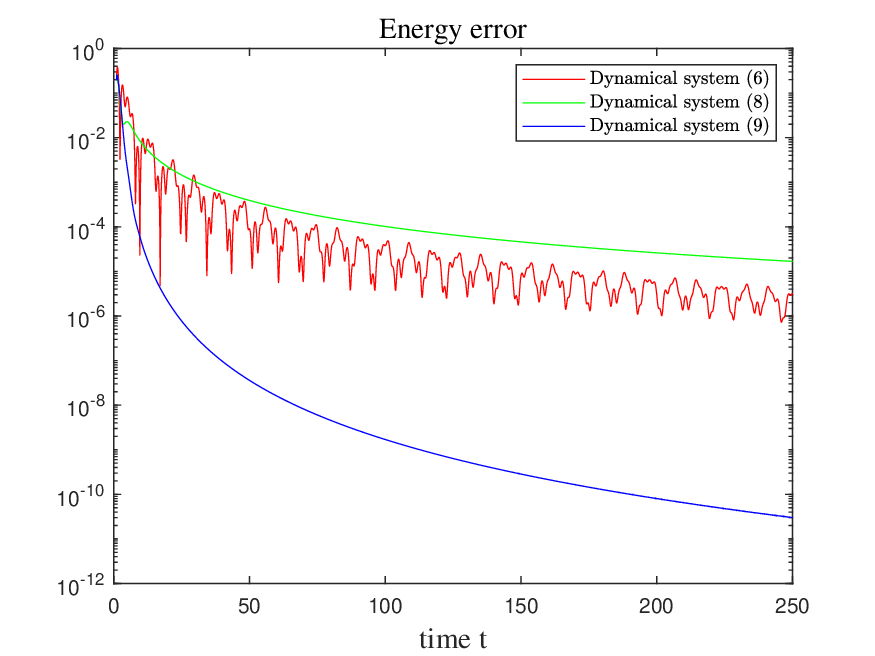}
\end{subfigure}

\caption{The behaviors of the trajectory  generated by the  systems (\ref{DS1}), (\ref{DS2}) and (\ref{DS}).}
\label{fig12}
\end{figure}
As illustrated in Figure \ref{fig12},    the   system (\ref{DS}) also performs better in energy error and iterate error than    (\ref{DS1}) and (\ref{DS2}).

 Table \ref{t2} reports stiffness-related information for the three systems in  problem (\ref{exam2ex}). Consistent with the findings in Table \ref{ta1}, although our system (\ref{DS}) demonstrates superior convergence performance in Figure \ref{fig12}, it also incurs the highest computational cost.
\begin{table}[htbp]
\centering
\caption{Stiffness-related information for Example 6.2}
\label{tab:stiffness}
\begin{tabular}{lccc}
\hline
\textbf{Dynamical System} & \textbf{Wall Clock Time (s)} & \textbf{Avg Step Size} & \textbf{Number of Time Points} \\
\hline
System (6) & 0.0128 & 1.3301e-01 & 1873 \\
System (8) & 0.0239 & 4.2318e-02 & 5885 \\
System (9) & 0.0295 & 4.2175e-02 & 5905 \\
\hline
\end{tabular}
\label{t2}
\end{table}

 In summary,  Figures \ref{fig1} and  \ref{fig12}  illustrate that the
time scaling parameter $\delta ( t )$  ensures the  trajectory generated by our   system (\ref{DS})  exhibits superior convergence rates. Additionally, Hessian damping term can induces significant attenuation of the oscillations. Meanwhile, as demonstrated in
Tables \ref{ta1} and   \ref{t2},  the theoretical acceleration attained by our system (\ref{DS}) is accompanied by increased stiffness and higher computational overhead.  This trade-off must be taken into account when assessing the practical applicability of enhanced dynamical systems.

\end{example}
\subsection{Algorithm case}
 In the following example, we will compare the algorithm  (IPGA)  with    (PIATR) proposed in \cite{2025LC}
 and  (IPATTH) proposed in \cite{2024AC}.

\begin{example}  Let $x\in \mathbb{R}^n$, $K\in \mathbb{R}^{m\times n}$ and $b\in \mathbb{R}^m$.  Consider the   $\ell _2$-regularized problem
\begin{eqnarray*}
\min_{x\in \mathbb{R}^n}{g(x)}:=\frac{1}{2}\lVert Kx-b \rVert ^2+  \lVert x \rVert ^2.
\end{eqnarray*}
In our  experiments, $K$ and $b$ are generated from the standard Gaussian distribution. We take stepsize $h=1$ and the initial condition $x_0=x_1=\mathbf{1}_n$. All algorithms are  solved on the iterations $[1,100]$.

We compare the performance of  (IPGA) with  (PIATR) proposed in \cite{2025LC} and (IPATTH) proposed in \cite{2024AC} under the following parameters setting:

$ \blacktriangleright$ (IPGA): ~~ $\alpha =15$, $\beta =4$, $a=1$, $p=1.9$, $q=0.95$ and $\delta_k =(kh)^5$;

$ \blacktriangleright$  (PIATR): ~~ $\alpha =15$,  $c=1$, $p=1.9$, $q=0.95$, $\lambda=1$ and $\delta=0$;

$ \blacktriangleright$  (IPATTH): ~~ $\alpha =15$, $\delta =4$, $c=1$  and $\beta_k =(kh)^5$.

Figures \ref{fig6} and \ref{fig7} depicts the behaviors of    $g( x_k) -g( \bar{x}^* ) $,   $ \rVert x_k -\bar x^*\rVert $, $ \rVert x_k -x_{k-1} \rVert $ and $ \rVert \nabla g(x_k) \rVert $  under different choices of the dimensionality $m$ and $n$.

As shown in Figures \ref{fig6} and \ref{fig7},  under different dimensional settings, our algorithm (IPGA) has a better convergence behaviour  than (PIATR) and (IPATTH).

\begin{figure}[h]
\centering
\begin{subfigure}[t]{0.45\textwidth}
\rotatebox{90}{\scriptsize{~~~~~~~~~~~~~~~~~~\small{$ g(x_k) -g( x^* )$}}}
\hspace{-1mm}
\includegraphics[width=1\linewidth]{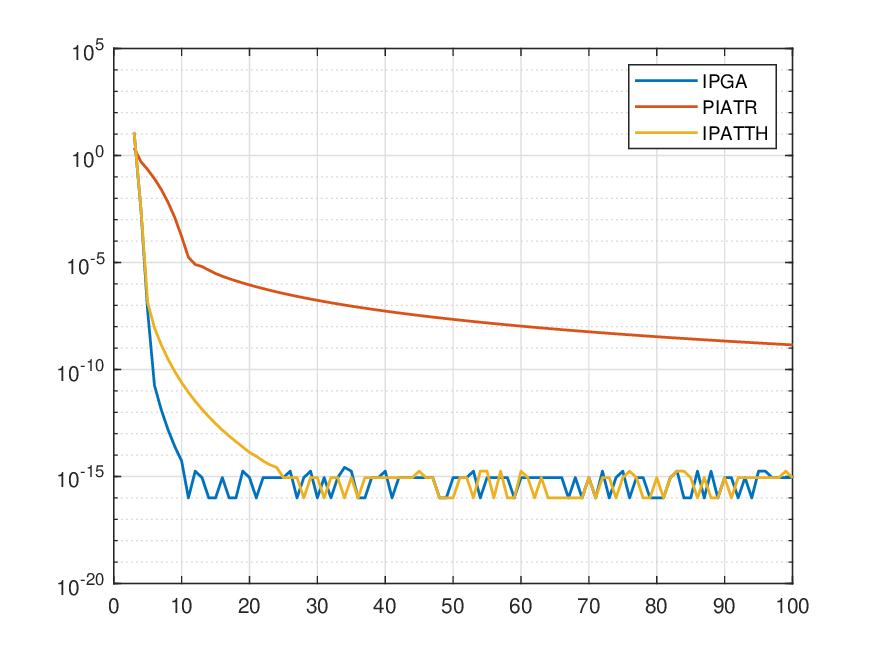}
\end{subfigure}
\hfill
\begin{subfigure}[t]{0.45\textwidth}
\rotatebox{90}{\scriptsize{~~~~~~~~~~~~~~~~~~\small{$ \rVert x_k -\bar x^*\rVert  $}}}
\hspace{-1mm}
\includegraphics[width=1\linewidth]{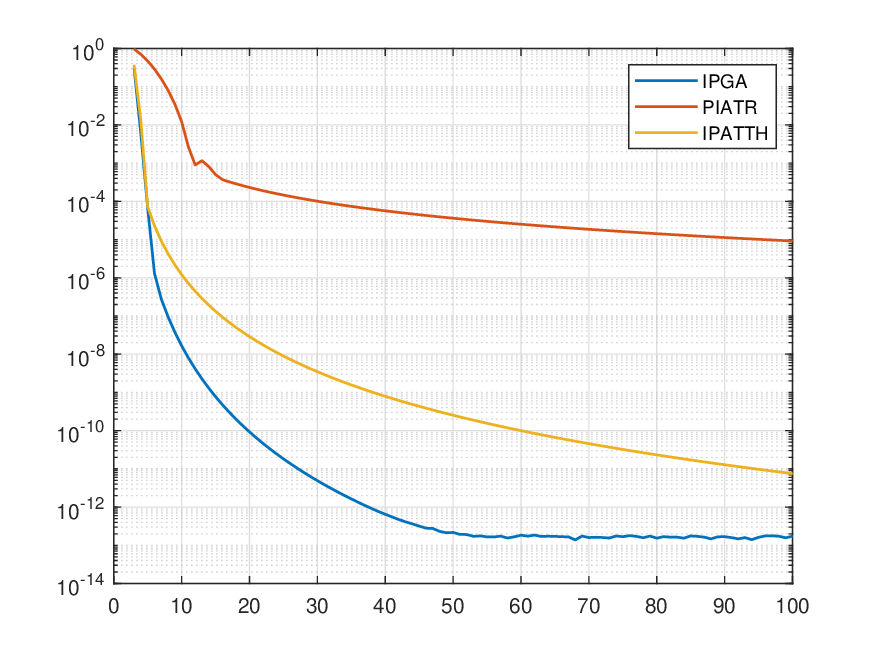}
\end{subfigure}
\centering
\begin{subfigure}[t]{0.45\textwidth}
\rotatebox{90}{\scriptsize{~~~~~~~~~~~~~~~~~~\small{$ \rVert x_k -x_{k-1} \rVert $}}}
\hspace{-1mm}
\includegraphics[width=1\linewidth]{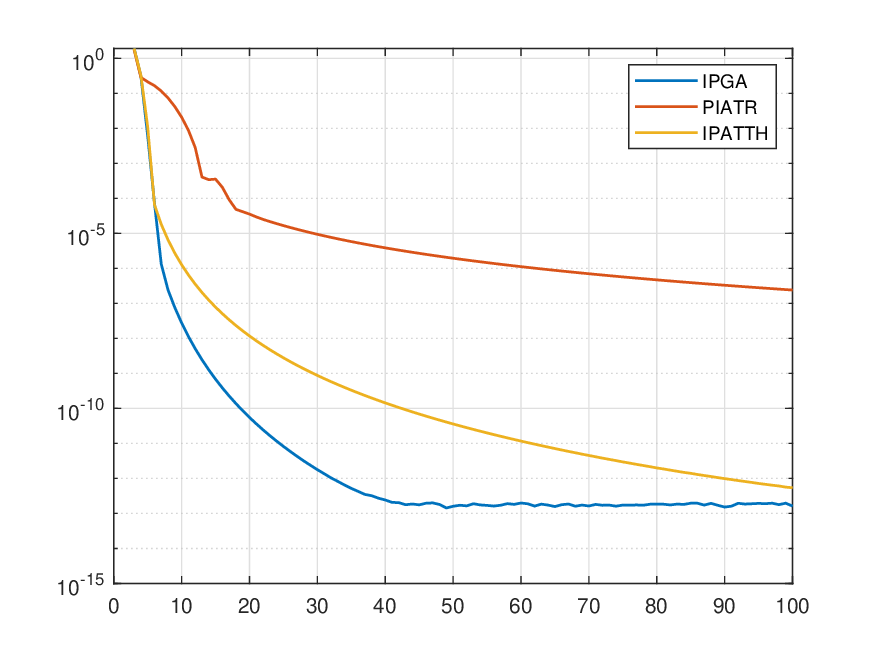}
\end{subfigure}
\hfill
\begin{subfigure}[t]{0.45\textwidth}
\rotatebox{90}{\scriptsize{~~~~~~~~~~~~~~~~~~\small{$ \rVert \nabla g(x_k) \rVert  $}}}
\hspace{-1mm}
\includegraphics[width=1\linewidth]{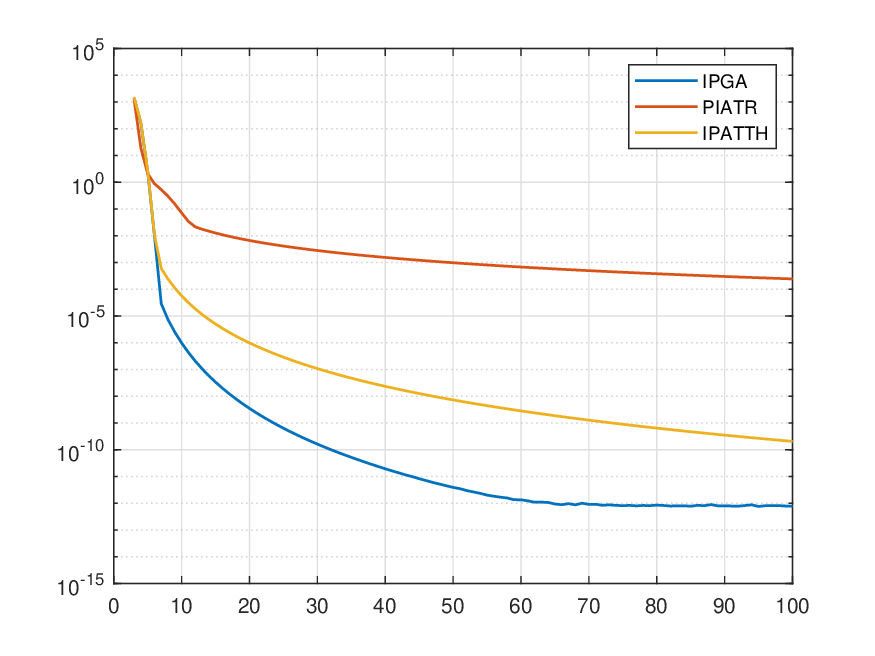}
\end{subfigure}
\caption{  $m=400$ and $n=500$.}
\label{fig6}
\end{figure}
\begin{figure}[h]
\centering
\begin{subfigure}[t]{0.45\textwidth}
\rotatebox{90}{\scriptsize{~~~~~~~~~~~~~~~~~~\small{$ g(x_k) -g( \bar{x}^* )$}}}
\hspace{-1mm}
\includegraphics[width=1\linewidth]{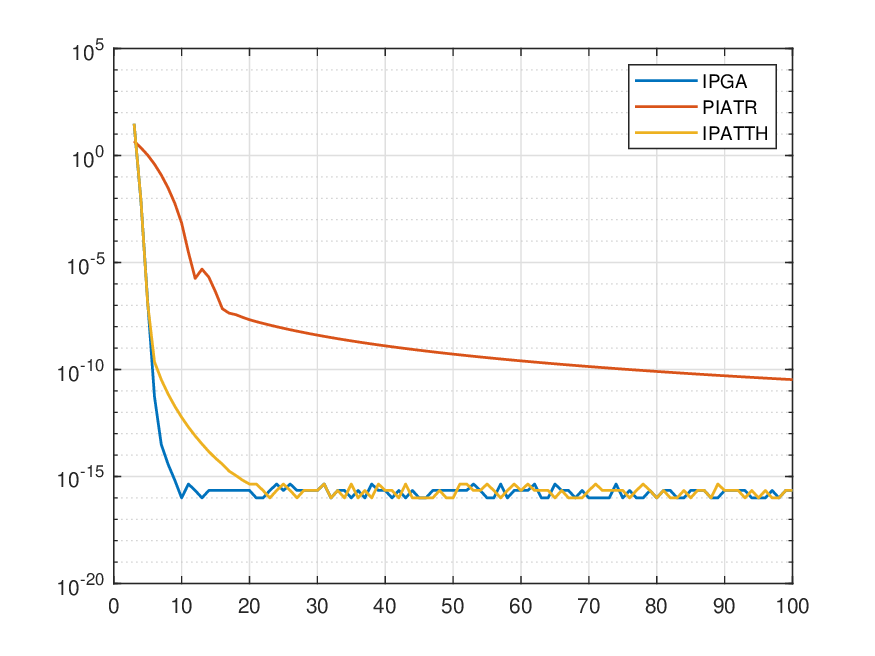}
\end{subfigure}
\hfill
\begin{subfigure}[t]{0.45\textwidth}
\rotatebox{90}{\scriptsize{~~~~~~~~~~~~~~~~~~\small{$ \rVert x_k -\bar x^*\rVert  $}}}
\hspace{-1mm}
\includegraphics[width=1\linewidth]{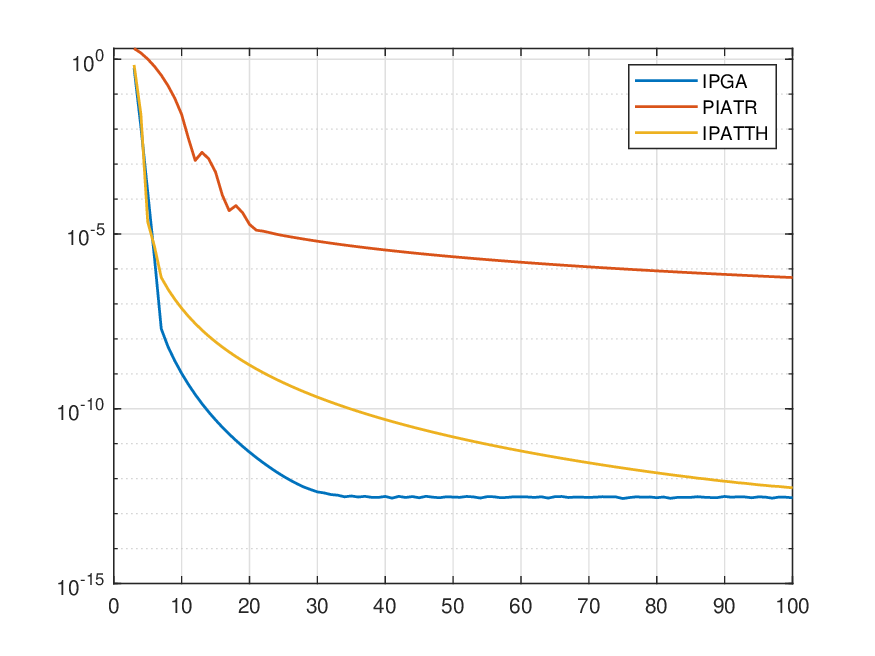}
\end{subfigure}
\centering
\begin{subfigure}[t]{0.45\textwidth}
\rotatebox{90}{\scriptsize{~~~~~~~~~~~~~~~~~~\small{$ \rVert x_k -x_{k-1} \rVert $}}}
\hspace{-1mm}
\includegraphics[width=1\linewidth]{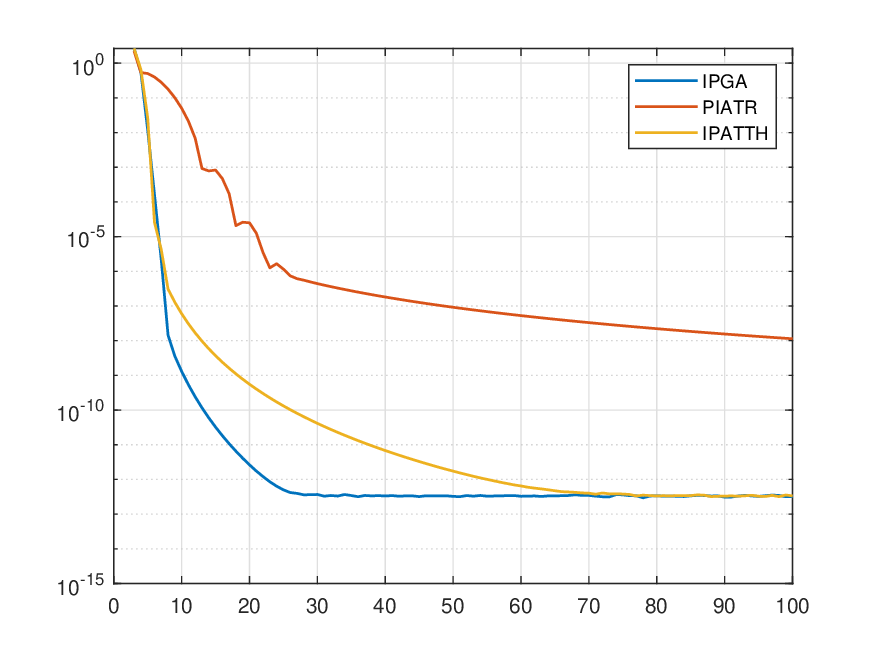}
\end{subfigure}
\hfill
\begin{subfigure}[t]{0.45\textwidth}
\rotatebox{90}{\scriptsize{~~~~~~~~~~~~~~~~~~\small{$ \rVert \nabla g(x_k) \rVert  $}}}
\hspace{-1mm}
\includegraphics[width=1\linewidth]{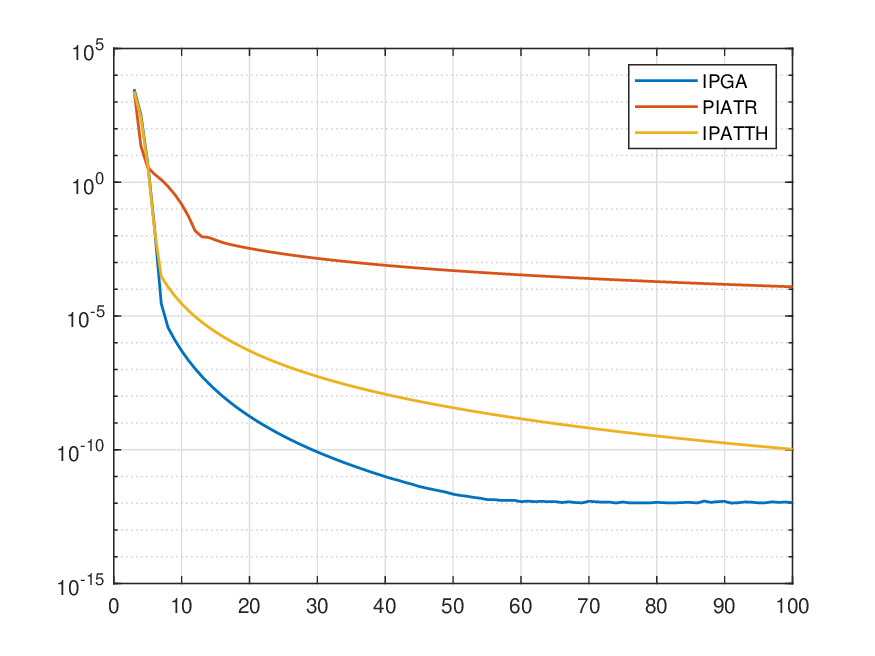}
\end{subfigure}
\caption{ $m=500$ and $ n=1000$.}
\label{fig7}
\end{figure}
\end{example}

\section{Conclusions}
In this paper, we investigate the asymptotic properties of the trajectories generated by  the second-order  dynamical system (\ref{DS}), which incorporates time scaling,  Tikhonov regularization and Hessian-driven damping, for solving the convex optimization problem (\ref{g}).
We first derive fast convergence rates for the function values and demonstrate that any trajectory generated by the system (\ref{DS}) converges weakly to a minimizer of the problem (\ref{g}).
Then, by using appropriate setting of the parameters, we achieve a fast convergence rate of the function value and strong convergence of the trajectory towards the minimum norm solution  of the   problem (\ref{g}).
We also study  the  convergence properties of the  algorithm (IPGA) obtained by the temporal  discretization of the continuous dynamic system (\ref{DS}).
In addition, our numerical experiments reveal that the algorithm (IPGA)  exhibits superior performance  as compared to the  algorithms (PIATR) and (IPATTH) introduced in \cite{2025LC}  and \cite{2024AC}, respectively.

In the future, a natural direction for our research will be to investigate the  explicit discretization  of the  system (\ref{DS}), which leads to
 a new class of fast algorithms for solving the convex optimization problem (\ref{g}). On the other hand, another challenge related to our research is whether we can extend our approach to the investigation of nonsmooth convex optimization problems.

\section*{Appendix A. Completion of the proof of Theorem 4.1}
\subsection*{\textbf{(i): We show that there exists $k_1\geq k_0 $ such that   $m_k\geq 0$, $n _k\geq0$ and $\zeta _k\geq0$  for all $k\ge k_1$.}}
\ \ \ \ \textbf{a.} Clearly,
$
m_k=\frac{1}{2}h^{2+2q}\beta (k+1)^q( 2\beta k^{q}-3\beta(k+1)^q+2hk^q \delta _k).
$
From $\lim _{k\rightarrow +\infty}\delta _k=+\infty$, there exists $k'_1\geq k_0$ such that $m_k\ge 0  $   for all $  k\ge k'_1.$

\textbf{b.}   Obviously, $n_k= \frac{1}{2}\lambda h^q( k^q-2( k+1 ) ^q+( k+2) ^q+ah^{2-p}k^{q-p})$.  We show that $n_k\ge 0$ and    $n_k=\mathcal{O}\left( k^{q-p} \right)$, as $k\rightarrow +\infty$.
Indeed, let $\phi ( x ) = \frac{1}{2}\lambda h^q\left( x^q-2( x+1 ) ^q+( x+2 ) ^q+ah^{2-p}x^{q-p} \right)$.  Then,
\begin{eqnarray*}
\begin{split}
\lim _{x\rightarrow +\infty}\frac{\phi ( x )}{x^{q-p}}
=&\lim _{x\rightarrow +\infty}\frac{ \frac{1}{2}\lambda h^q\left( x^q-2( x+1 ) ^q+( x+2 ) ^q+ah^{2-p}x^{q-p} \right)}{x^{q-p}}\\
=&\lim_{x\rightarrow +\infty}\frac{1}{2}\lambda h^q\left( \frac{1-2\left( 1+\frac{1}{x} \right) ^q+\left( 1+\frac{2}{x} \right) ^q}{x^{-p}}+a h^{2-p} \right)\\
=&\lim_{x\rightarrow +\infty}\frac{1}{2}\lambda h^q\left( \frac{ 2 q     \left( 1+\frac{2}{x} \right) ^{q-1}-2q    \left( 1+\frac{1}{x} \right) ^{q-1}}{ px^{1-p}}+ah^{2-p} \right)\\
=&\lim_{x\rightarrow +\infty}\frac{1}{2}\lambda h^q\left( \frac{ 2 q( q-1 )     \left( 1+\frac{1}{x} \right) ^{q-2}-4 q( q-1 )     \left( 1+\frac{2}{x} \right) ^{q-2}}{p(1-p)x^{2-p}}+ah^{2-p} \right).
\end{split}
\end{eqnarray*}
Obviously, $\lim _{x\rightarrow +\infty}\frac{\phi ( x )}{x^{q-p}}=\frac{1}{2}a\lambda h^{q-p+2}>0$ if $p<2$ and we  deduce  from $a>q(1-q)$ that $\lim _{x\rightarrow +\infty}\frac{\phi ( x )}{x^{q-p}}=\frac{1}{2}\lambda h^q (   q( q-1) +a  )>0$ if $p=2$.
Hence, $n_k=\mathcal{O}\left( k^{q-p} \right)$,  as $k\rightarrow +\infty$. Consequently, there exists $k''_1\geq k_0$ such that $n_k\ge0$  for all $k\ge k''_1$.

\textbf{c.} Clearly,
$
\zeta_k= \frac{1}{2}a h^{q-p+2}\Big(h^qk ^{2q-p} - ah^{q-p+2}k^{2q-2p}+\alpha h k^{ q- p} - h^q(k+1) ^{2q-p} -(\alpha h-\lambda ) (k+1) ^{q-p}\Big).
$
We show that $\zeta_k\ge 0$ and    $\zeta_k=\mathcal{O}\left( k^{q-p} \right)$, as $k\rightarrow +\infty$.
Indeed, let
$
\psi(x)= \frac{1}{2}a h^{q-p+2}\Big(h^qx ^{2q-p} - ah^{q-p+2}x^{2q-2p}+\alpha h x^{ q- p} - h^q(x+1) ^{2q-p} -(\alpha h-\lambda ) (x+1) ^{q-p}\Big).
$
Then,
\begin{eqnarray*}
\small{\begin{split}
&\lim_{x\rightarrow +\infty}\frac{\phi ( x )}{x^{q-p}}\\
=&\lim_{x\rightarrow +\infty}\frac{1}{2}a h^{q-p+2}\frac{h^qx ^{2q-p} - ah^{q-p+2}x^{2q-2p}+\alpha h x^{ q- p} - h^q(x+1) ^{2q-p} -(\alpha h-\lambda ) (x+1) ^{q-p}}{x^{q-p}}\\
=&\lim_{x\rightarrow +\infty}\frac{1}{2}a h^{q-p+2}\left( \frac{h^q}{x^{-q}} -\frac{ah^{q-p+2}}{x^{p-q}}+\alpha h-\frac{h^q \left( 1+\frac{1}{x} \right) ^{2q-p}}{x^{-q}}-(\alpha h-\lambda) \left( 1+\frac{1}{x} \right) ^{q-p}   \right)\\
=&\lim_{x\rightarrow +\infty}\frac{1}{2}a h^{q-p+2}\left( \frac{ h^q-h^q \left( 1+\frac{1}{x} \right) ^{2q-p}}{x^{-q}}+\lambda\right)\\
=&\lim_{x\rightarrow +\infty}\frac{1}{2}a h^{q-p+2}\left( \frac{( p-2q)h^q\left( 1+\frac{1}{x} \right) ^{2q-p-1}}{q x^{1-q}}+\lambda\right)
=\frac{1}{2}a\lambda h^{q-p+2}>0.
\end{split}}
\end{eqnarray*}
Hence, $\zeta_k=\mathcal{O}\left( k^{q-p} \right)$,  as $k\rightarrow +\infty$. Consequently, there exists $k'''_1\geq k_0$ such that  $\zeta_k\ge0$    for all $k\ge k'''_1$.

Finally, take $k_1=\max \{ k'_1,k''_1,k'''_1 \}$. Then,   $m_k\geq 0$, $n _k\geq0$ and $\zeta _k\geq0$  for all $k\ge k_1$.
\subsection*{\textbf{(ii): We show that $\ell_k=\mathcal{O} (k^0  )$ and $\mu_k=\mathcal{O}\left( k^{2q}\delta_k\right)$, as $k\rightarrow+\infty$.}}
\ \ \ \ \textbf{a.} Clearly, $\ell_k=h^q(k^q-(k+1)^q)+\alpha h-\lambda.$ We show that $\ell_k=\mathcal{O}\left( k^{0} \right)$, as $k\rightarrow +\infty$. Indeed, let $\phi ( x ) = h^q(x^q-(x+1)^q)+\alpha h-\lambda$.  Then,
\begin{eqnarray*}
\begin{split}
\lim _{x\rightarrow +\infty} \phi ( x )
=\lim _{x\rightarrow +\infty}\left(h^q\frac{    1-( 1+\frac{1}{x} ) ^q   }{x^{-q}}+\alpha h-\lambda\right)
=&\lim _{x\rightarrow +\infty}\left(-h^q\frac{( 1+\frac{1}{x} ) ^{q-1}   }{x^{1-q}}+\alpha h-\lambda\right)\\
=&\alpha h-\lambda>0.
\end{split}
\end{eqnarray*}
This means that $\ell_k=\mathcal{O}\left( k^{0} \right)$, as $k\rightarrow +\infty$.

\textbf{b.} It is easy to show that
$\mathcal{B}_k=\mathcal{O}\left( k^{q}\delta_k \right)$, as $k\rightarrow +\infty$.   Then, together with $q_{k+1}=h^q(k+1) ^q$ and $\ell_k=\mathcal{O}\left( k^{0} \right)$, we have $\mu_k =\mathcal{O}\left( k^{2q}\delta_k \right)$, as $k\rightarrow +\infty$.

\section*{Appendix B. Completion of the proof of Theorem 4.2}
\subsection*{\textbf{(i)
We show that there exists $k_0\in \mathbb{N}$ such that   $b_k\geq 0$, $\gamma _k\geq0$, $\xi _k\geq0$, $\eta _k\geq0$   and $\sigma _k\geq0$  for all $k\ge k_0$.}}
\ \ \ \ \textbf{a.} Clearly, $B_k =\mathcal{O}\left( k^{q}\delta _k \right)$,  as $k\rightarrow +\infty.$ Thus, we can deduce that
$b_k =\mathcal{O}\left( k^{2q}\delta _k \right)$,  as $k\rightarrow +\infty.$
Consequently, there exists $k'_0\in \mathbb{N} $ such that $b_k\ge 0  $  for all  $k\ge k'_0.$

\textbf{b.} Obviously, $\gamma _k=  h^q( k-1 ) ^q -h^qk^q+\alpha h-\lambda+\frac{1}{2}a h^{q-p+2}( k-1 )^{q-p}
$. We show that $\gamma _k\ge0$  and  $ \gamma _k =\mathcal{O}\left( k^{0} \right)$, as $k\rightarrow +\infty$. Indeed, let $\phi ( x ) = h^q( x-1 ) ^q -h^qx^q+\alpha h-\lambda+\frac{1}{2}a h^{q-p+2}( x-1 )^{q-p}   $. Then,
\begin{eqnarray*}
\begin{split}
\lim_{x\rightarrow +\infty} \phi ( x )
=&\lim_{x\rightarrow +\infty}  \left( \frac{h^q \left( 1-\frac{1}{x} \right) ^q-h^q}{x^{-q}}+\alpha h-\lambda+\frac{h^{q-p+2}}{2( x-1 ) ^{p-q}}\right)\\
=&\lim_{x\rightarrow +\infty}  \frac{-h^q\left( 1-\frac{1}{x} \right) ^{q-1}}{x^{1-q}}+\alpha h-\lambda
=   \alpha h-\lambda  .
\end{split}
\end{eqnarray*}
Since $\alpha h>2\lambda$, there exists $k''_0\in \mathbb{N}$ such that $\gamma _k\ge0$  for all $k\ge k''_0$, and  $ \gamma _k =\mathcal{O}\left( k^{0} \right)$, as $k\rightarrow +\infty$.

\textbf{c.} Since $ {B}^2_k =\mathcal{O}\left( k^{2q}\delta^2 _k \right)$, we have
 $\frac{1}{2}B_{k}^{2}-h^2\beta ^2   q_{k+1}^{2} =\mathcal{O}\left( k^{2q}\delta^2 _k \right), $   as $k\rightarrow +\infty$.
 Thus, from $\delta^2_k\geq c_0 k^{p-q}$ and  $ \frac{2\lambda\beta ^2   q_{k+1}^{2}}{q_{k}a_{k}} =\mathcal{O}\left( k^{q+p}\right)$, we can easily get $\xi _k =\mathcal{O}(k^{2q}\delta _{k}^{2} )$, as $k\rightarrow +\infty$. Consequently, there exists $k'''_0\in \mathbb{N}$ such that
$
\xi _k\ge 0
$
 for all $k\ge k'''_0$.

\textbf{d.} Obviously,
$
\eta _k= h^{2q}k^{2q}+2\alpha h^{q+1}k^q+\alpha ^2h^2-h^{2q}\left( k+1 \right) ^{2q}-\lambda h^q\left( k+1 \right) ^q-3\lambda h^qk^q-3\lambda \alpha h -3\lambda ah^{q-p+2}k^{q-p}+ah^{2q-p+2}k^{2q-p}+a\alpha h^{q-p+3}k^{q-p} .
$
We show that  $\eta _k\ge0$  and  $ \eta _k =\mathcal{O}\left( k^{q} \right)$, as $k\rightarrow +\infty$. Indeed, let
$
\phi(x)= h^{2q}x^{2q}+2\alpha h^{q+1}x^q+\alpha ^2h^2-h^{2q}( x+1 ) ^{2q}-\lambda h^q( x+1 ) ^q-3\lambda h^qx^q-3\lambda \alpha h
-3\lambda ah^{q-p+2}x^{q-p}+ah^{2q-p+2}x^{2q-p}+a\alpha h^{q-p+3}x^{q-p} .
$
Then,
\begin{eqnarray*}
{\begin{split}
\lim_{x\rightarrow +\infty}\frac{\phi \left( x \right)}{x^q}
=&\lim_{x\rightarrow +\infty} \left(h^{2q}x^q+2\alpha h^{q+1}-3\lambda h^q+\frac{\alpha ^2h^{2 }-3\lambda \alpha h}{x^q}-\frac{h^{2q}(x+1)^{2q}}{x^q}\right.\\
&~~~~~~~~~~~~\left.-\frac{\lambda h^q(x+1)^q}{x^q}-\frac{3\lambda ah^{q-p+2}}{x^{p}}+\frac{ah^{2q-p+2}}{x^{p-q}}+\frac{a\alpha h^{q-p+3}}{x^{p}}\right) \\
=&\lim_{x\rightarrow +\infty}   \frac{h^{2q}-h^{2q}\left( 1+\frac{1}{x} \right) ^{2q}}{x^{-q}}+2\alpha h^{q+1}-4\lambda h^q  \\
=&\underset{x\rightarrow +\infty}{\lim}  \frac{-2h^{2q}\left( 1+\frac{1}{x} \right) ^{2q-1}}{x^{1-q}}+2\alpha h^{q+1}-4\lambda h^q \\
=&2h^q\left( \alpha h-2\lambda \right).
\end{split}}
\end{eqnarray*}
Since $\alpha h>2\lambda$, we get $ \eta _k =\mathcal{O}\left( k^{q} \right)$, as $k\rightarrow +\infty$.
Consequently, there exists $k''''_0\in\mathbb{ N}$ such that $\eta _k\ge0$ for all  $k\ge k''''_0$.

\textbf{e.}
Obviously, together with $\delta_{k-1}\leq\delta_{k }$, it is easy to show that
\begin{eqnarray*}
\begin{split}
\sigma _k=& a h^{q-p+2}\Big( h^q( k-1 ) ^{2q-p}+\alpha h( k-1 ) ^{q-p} -ah^{q-p+2}( k-1 ) ^{2q-2p}\Big)\\
&- a\Big( \beta h^{2q-p+1} ( k-1 ) ^{2q}k^{-p}+h^{2q-p+2}( k-1 ) ^{2q}k^{-p}\delta _{k-1}+\alpha\beta h^{q-p+2} ( k-1 ) ^{q}k^{-p}\\
&~~~~~+\alpha h^{q-p+3} ( k-1 ) ^{q}k^{-p}\delta _{k-1}-\beta h^{2q-p+1}k^{2q-p} \Big) \frac{ 1}{ \delta _k}\\
=&  a h^{2q-p+2}  ( k-1 ) ^{2q}\left((k-1)^{-p}-k^{-p} \frac{ \delta_{k-1}}{ \delta _k}\right)-a^2h^{2q-2p+4}( k-1 ) ^{2q-2p}\\
&+a\alpha h^{q-p+3} ( k-1 ) ^{q}\left((k-1)^{-p}-k^{-p}\frac{\delta _{k-1}}{\delta_k}\right)\\
& - a\Big( \beta h^{2q-p+1} ( k-1 ) ^{2q}k^{-p}+\alpha\beta h^{q-p+2} ( k-1 ) ^{q}k^{-p}-\beta h^{2q-p+1}k^{2q-p} \Big) \frac{ 1}{ \delta _k}\\
\geq&  a h^{2q-p+2}  ( k-1 ) ^{2q}\left((k-1)^{-p}-k^{-p}  \right) -a^2h^{2q-2p+4}( k-1 ) ^{2q-2p}\\
&+a\alpha h^{q-p+3} ( k-1 ) ^{q}\left((k-1)^{-p}-k^{-p} \right)\\
& - a\Big( \beta h^{2q-p+1} ( k-1 ) ^{2q}k^{-p}+\alpha\beta h^{q-p+2} ( k-1 ) ^{q}k^{-p}-\beta h^{2q-p+1}k^{2q-p} \Big) \frac{ 1}{ \delta _k}.
\end{split}
\end{eqnarray*}
Since $p>1$, we can deduce that there exists $k'''''_0\in \mathbb{N}$ such that $\sigma _k\ge0$  for all $k\ge k'''''_0$.

Now, take $k_0=\max \{ k'_0,k''_0,k'''_0, k''''_0,k'''''_0 \}$. Then,  $b_k\geq 0$, $\gamma _k\geq0$, $\xi _k\geq0$, $\eta _k\geq0$   and $\sigma _k\geq0$  for all $k\ge k_0$.

\subsection*{\textbf{(ii) We show that   $\nu_k=\mathcal{O}\left( k^{q}\delta _k \right)$, $\omega _k=\mathcal{O}\left( k^{q-p} \right)$  and $   \tau _k\geq\frac{1}{2} \sigma _k $, as $k\rightarrow +\infty$.}}

\ \ \ \ \textbf{a.}  From $ \delta_{k }<\delta_{k+1 }$ and $b_k =\mathcal{O}\left( k^{2q}\delta _k \right)$, we can deduce that there exists $c'>0$ such that   ${b}_{k }-b_{k+1} \leq -c'  k^{2q-1}\delta _k $ for $k $ big enough. This together with $ B_{k} =\mathcal{O}\left( k^{q}\delta _k \right)$ yields
$\nu_k =b_k-b _{k+1}+ \lambda B_k=\mathcal{O}\left( k^{q}\delta _k \right)$,  as $k\rightarrow +\infty.$

\textbf{b.}   Obviously,
$
\omega _k=-\lambda h^qk^q- \lambda s h^{q-p}\left( h^qk^q+\alpha h+\alpha h^{q-p+2}k^{q-p}-h^q( k+1 ) ^q \right)k^{q-p}+\frac{1}{2}\lambda h^q( k+1 ) ^q+ \frac{1}{2}\lambda h^q( k-1 ) ^q+\frac{1}{4}\lambda a  h^{q-p+2}(k-1)^{q-p}.
$
We show that  $ \omega _k =\mathcal{O}\left( k^{q-p} \right)$, as $k\rightarrow +\infty$.
Indeed, let
$
\phi( x ) =-\lambda h^qx^q- \lambda s h^{q-p}( h^qx^q+\alpha h+ \alpha h^{q-p+2}x^{q-p}-h^q( x+1 ) ^q )x^{q-p}+\frac{1}{2}\lambda h^q( x+1 ) ^q+ \frac{1}{2}\lambda h^q( x-1 ) ^q+\frac{1}{4}\lambda a  h^{q-p+2}(x-1)^{q-p}.
$
Then,
\begin{eqnarray*}
\small{\begin{split}
&\underset{x\rightarrow +\infty}{\lim}\frac{\phi \left( x \right)}{x^{q-p}}\\
=&\underset{x\rightarrow +\infty}{\lim}\left( \frac{- \lambda h^q+\frac{1}{2}\lambda h^q\left( 1+\frac{1}{x} \right) ^q+\frac{1}{2}\lambda h^q\left( 1-\frac{1}{x} \right) ^q}{x^{-p}}+\frac{1}{4}\lambda a h^{q-p+2}\left(1-\frac{1}{x}\right)^{q-p} \right.\\
& \left.- \lambda s h^{q-p}\left( \frac{h^q\left( 1-\left( 1+\frac{1}{x} \right) ^q \right)}{x^{-q}}+\alpha h+\frac{  \alpha h^{q-p+2}}{x^{p-q}}
 \right)\right)\\
=&\underset{x\rightarrow +\infty}{\lim}  \left(\frac{\lambda q h^q \left( 1+\frac{1}{x} \right) ^{q-1}-\lambda q h^q \left( 1-\frac{1}{x} \right) ^{q-1}}{2p x^{1-p}}+\frac{1}{4}\lambda a h^{ q-p+2}+ \lambda s h^{q-p}\left(\frac{h^q\left( 1+\frac{1}{x} \right) ^{q-1}}{x^{1-q}}-\alpha h\right)
 \right)\\
=&\underset{x\rightarrow +\infty}{\lim}  \frac{\lambda q(q-1) h^q \left( 1+\frac{1}{x} \right) ^{q-2}+\lambda q(q-1) h^q \left( 1-\frac{1}{x} \right) ^{q-2}}{2p(p-1) x^{2-p}}+\frac{1}{4}\lambda ah^{q-p+2}- \lambda s\alpha h^{q-p+1} \\
=&\frac{1}{4}\lambda ah^{q-p+2}- \lambda s\alpha h^{q-p+1}.
\end{split}}
\end{eqnarray*}
Since $s<\frac{ha}{4\alpha}$, we get $\frac{1}{4}\lambda ah^{q-p+2}- \lambda h^{q-p}s\alpha h>0$. Hence,    $ \omega _k =\mathcal{O}\left( k^{q-p} \right)$, as $k\rightarrow +\infty$.

\textbf{c.} It is easy to show that
\begin{eqnarray*}
\begin{split}
\tau _k
=\frac{1}{2}ah^{q-p+1}\Big( \beta h^{q} k^{2q-p}+\alpha \beta h k^{q-p}-\lambda \beta k^{q-p}-\beta h^{q}( k+1 ) ^{2q-p} \Big) \frac{1}{\delta _k}+\frac{1}{2} \sigma _k.
\end{split}
\end{eqnarray*}
Since $\alpha h> \lambda$, we have $\alpha \beta h k^{q-p}-\lambda \beta k^{q-p}\geq0.$ This means that
$\beta h^{q} k^{2q-p}+\alpha \beta h k^{q-p}-\lambda \beta k^{q-p}-\beta h^{q}( k+1 ) ^{2q-p} \geq0$  for $k$ big enough.
  Then, $  \tau _k \geq \frac{1}{2}\sigma _k$  for $k$ big enough.

\subsection*{\textbf{(ii) We show that $\lVert \bar{x}_{k+1}-\bar{x}_k \rVert \le\frac{(1+c)p }{ k-cp }\lVert \bar{x}^* \rVert$   for $k$   big   enough.}}

\ \ \ \  In this proof, we consider two cases:

$\mathbf{Case \ I}$: $0<p<1$. In this case, by the convexity of the function $x\mapsto -x^p$, we can deduce from
the gradient differential inequality that
$pk^{p-1}\geq -k^p+(k+1)^p $. Thus, it follow from the condition $(\mathcal{G})$ that $\frac{\delta _{k+1}}{\delta _k}\leq\frac{k^p}{k^p-cpk^{p-1}}=\frac{k}{k-cp}$.
This  together with $(\ref{axii})$ and $\|\bar{x}_k\|\leq \lVert \bar{x}^* \rVert $ yields
\begin{eqnarray*}
{\begin{split}
 \|\bar{x}_{k+1}-\bar{x}_k\|
\leq&\left( \frac{\delta_{k+1}}{\delta _k}\frac{a_k}{a_{k+1}}-1 \right)
\lVert \bar{x}_k \rVert
=\left( \frac{\delta_{k+1}}{\delta _k}\frac{(k+1)^p}{k^p}-1 \right)
\lVert \bar{x}_k \rVert\\
\leq&\left( \frac{k}{k-cp}\frac{k^p+pk^{p-1}}{k^p}-1 \right)
\lVert \bar{x}^* \rVert
= \frac{(1+c)p}{k-cp}
\lVert \bar{x}^* \rVert.
\end{split}}
\end{eqnarray*}

$\mathbf{Case \ II}$: $p\geq1$. In this case, by the convexity of the function $x\mapsto x^p$, we can deduce from
the gradient differential inequality that
$p(k+1)^{p-1}\geq (k+1)^p-k^p.$
  Then, together with $(\ref{axii})$, $\frac{\delta_k}{\delta_{k+1}}\geq\frac{(1+c)k^p-c(k+1)^p}{k^p}$ and $\|\bar{x}_{k+1}\|\leq \lVert \bar{x}^* \rVert $,  we obtain
\begin{eqnarray*}
{\begin{split}
 \|\bar{x}_{k+1}-\bar{x}_k\|
\leq&\left(1-\frac{\delta_{k}}{\delta _{k+1}}\frac{a_{k+1}}{a_k} \right)
\lVert \bar{x}_{k+1} \rVert
=  \frac{(1+c) ((k+1)^p- k^{p })}{(k+1)^p}
\lVert \bar{x}_{k+1} \rVert\\
\le&\frac{(1+c)p  (k+1)^{p-1} }{(k+1)^p}
\lVert \bar{x}_{k+1} \rVert
\le \frac{(1+c)p}{k-cp}
\lVert \bar{x}^* \rVert.
\end{split}}
\end{eqnarray*}
Now, we conclude that $\lVert \bar{x}_{k+1}-\bar{x}_k \rVert \le\frac{(1+c)p }{ k-cp }\lVert \bar{x}^* \rVert$  for $k$   big   enough.

 \section*{Funding}
{\small This research is supported by the Natural Science Foundation of Chongqing (CSTB2024NSCQ-MSX0651) and the Team Building Project for Graduate Tutors in Chongqing (yds223010).}

\section*{Declaration}
{\small $\mathbf{Conflict ~of~ interest}$ No potential conflict of interest was reported by the authors.}


\begin{thebibliography}{99}
\makeatletter
\def\@biblabel#1{#1.}
\makeatother
{\small






\bibitem{SBC2016} Su, W., Boyd, S., Cand\`es, E.: A differential equation for modeling Nesterov's accelerated gradient method. J. Mach. Learn. Res. 17: 5312-5354 (2016)

\bibitem{y1983} Nesterov, Y.: A method of solving a convex programming problem with convergence rate $\mathcal{O}\left(\frac{1}{k^2}\right)$. Insov. Math. Dokl. 27: 372-376 (1983)


\bibitem{acr2019} Attouch, H., Chbani, Z., Riahi, H.: Fast proximal methords via time scaling of damped inertial dynamics. SIAM J. Optim. 29: 2227-2256 (2019)

\bibitem{mp18a}Attouch, H., Chbani, Z., Peypouquet, J., Redont, P.: Fast convergence of inertial dynamics and algorithms with asymptotic vanishing viscosity. Math. Program. 168: 123-175 (2018)

 \bibitem{botm22}Bo\c{t}, R.I., Karapetyants, M.A.: A fast continuous time approach with time scaling for nonsmooth convex
optimization. Adv. Cont. Discr. Mod. 2022: 73 (2022)

 \bibitem{al21amo}   Alecsa, C.D., L\'aszl\'o, S.C., Pin\c{t}a, T.: An extension of the second order dynamical system that models Nesterov¡¯s convex gradient method. Appl. Math. Optim. 84: 1687-1716 (2021)
\bibitem{jems} Attouch, H., Bo\c{t}, R.I., Csetnek, E.R.: Fast optimization via inertial dynamics with closed-loop damping.
J. Eur. Math. Soc. 25: 1985-2056 (2023)



\bibitem{mor} Attouch, H., Bo\c{t}, R.I.,   Nguyen, D.K.: Fast Convex optimization via time scale and averaging of the steepest descent. Math. Oper. Res 50: 2633-2665  (2024)

  \bibitem{fcm25}  Bo\c{t}, R.I., Csetnek, E.R., Nguyen, D.K.: Fast optimistic gradient descent ascent (OGDA) method in continuous and discrete time. Found. Comut. Math. 25: 163-222 (2025)

\bibitem{hecoap} He, X., Fang, Y. Accelerated forward-backward algorithms with subgradient corrections. Comput. Optim. Appl.  93: 121-156  (2026)



\bibitem{ACr2018} Attouch, H., Chbani, Z., Riahi, H.: Combining fast inertial dynamics for convex optimization  with Tikhonov regularization. J. Math. Anal. Appl. 457: 1065-1094 (2018)

\bibitem{xuwen} Xu, B., Wen, B.: On the convergence of a class of inertial dynamical systems with Tikhonov regularization. Optim. Lett. 15: 2025-2052 (2021)

\bibitem{attjde22}Attouch, H., Balhag, A., Chbani, Z., Riahi, H.: Damped inertial dynamics with vanishing Tikhonov regularization: Strong asymptotic convergence towards the minimum norm solution. J. Differ. Equ. 311: 29-58 (2022)

\bibitem{qp2023} L\'aszl\'o, S.C.: On the strong convergence of the trajectories of a Tikhonov regularized second-order dynamical system with asymptotically vanishing damping. J. Differ. Equ. 362: 355-381 (2023)

\bibitem{2025LC}  L\'aszl\'o, S.C.:   On the convergence of an inertial proximal algorithm with a Tikhonov regularization term. Commun.  Nonlinear Sci. Numer. Simulat. 149: 108924 (2025)

\bibitem{jnva} Bagy, A.C., Chbani, Z., Riahi, H.:  Fast convergence rate of values with strong convergence of trajectories via inertial dynamics with Tikhonov regularization terms and asymptotically vanishing damping. J. Nonlinear Var. Anal. 8: 691-715 (2024)


\bibitem{TRIGS1} Attouch, H., L\'aszl\'o, S.C.: Convex optimization via inertial algorithms with vanishing Tikhonov regularization: Fast convergence to the minimum norm solution. Math. Meth. Oper. Res. 99: 307-347 (2024)


\bibitem{renjota} Ren, H., Ge, B.,   Zhuge, X.: Fast convergence of inertial gradient dynamics with multiscale aspects. J. Optim. Theory Appl. 196: 461-489 (2023)

\bibitem{jota24} Csetnek, E.R., Karapetyants, M.A.: Second-order dynamics featuring Tikhonov regularization and time scaling. J. Optim. Theory Appl. 202: 1385-1240 (2024)

\bibitem{24amop10} Karapetyants, M.A., L\'aszl\'o, S.C.: A Nesterov type algorithm with double Tikhonov regularization: fast convergence of the function values and strong convergence to the minimal norm solution. Appl. Math. Optim. 90: 17 (2024)

\bibitem{bot21mp}Bo\c{t}, R.I., Csetnek, E.R., L\'aszl\'o, S.C.: Tikhonov regularization of a second order dynamical system with Hessian damping. Math. Program. 189: 151-186 (2021)



\bibitem{A.C. Bagy} Bagy, A.C., Chbani, Z., Riahi, H.: Strong convergence of trajectories via inertial dynamics combining Hessian driven damping and Tikhonov regularization for general convex minimizations. Numer. Funct. Anal. Optim. 44: 1481-1509 (2023)

\bibitem{2024AC} Bagy, A.C., Chbani, Z., Riahi, H.: On the strong convergence of an inertial proximal algorithm with a time scale, Hessian-driven damping, and a Tikhonov regularization term. J. Appl. Numer. Optim. 6: 249-269 (2024).

\bibitem{amo23att}   Attouch, H., Balhag, A., Chbani, Z., Riahi, H.: Accelerated gradient methods combining Tikhonov
regularization with geometric damping driven by the Hessian. Appl. Math. Optim. 88: 29 (2023)


\bibitem{siam21}  Alecsa, C.D., L\'aszl\'o, S.C.: Tikhonov regularization of a perturbed heavy ball system with vanishing damping. SIAM J. Optim. 31: 2921-2954 (2021)

\bibitem{coap24k} Karapetyants, M.A.: A fast continuous time approach for non-smooth convex optimization using Tikhonov regularization technique. Comput. Optim. Appl. 87: 531-569 (2024)

\bibitem{zhong} Zhong, G., Hu, X., Tang, M., Zhong, L.: Fast convex optimization via differential equation with Hessian-driven damping and Tikhonov regularization. J. Optim. Theory Appl. 203: 42-82 (2024)


\bibitem{coap25s}  L\'aszl\'o, S.C.:   Solving convex optimization problems via a second order dynamical system with implicit Hessian damping and Tikhonov regularization. Comput. Optim. Appl. 90: 113-149 (2025)

\bibitem{amp222}  Attouch, H., Chbani, Z., Fadili, J., Riahi, H.: First-order algorithms via inertial systems with Hessian driven damping. Math. Program. 193: 113-155 (2022)

\bibitem{aop23} Attouch, H., Chbani, Z., Fadili, J., Riahi, H.: Convergence of iterates for first-order optimization algorithms with inertia and Hessian driven damping. Optimization, 72: 1199-1238 (2023)




}
\end{thebibliography}
\end{document}